\numberwithin{equation}{section}
\definecolor{my_color}{rgb}{0,0.5,0.5}
\definecolor{MIXT}{rgb}{0.4,0.3,0.6}
\definecolor{mixt}{rgb}{0.5,0.3,0.2}
\definecolor{sin}{rgb}{0,0.5,0.5}
\definecolor{darkblue}{rgb}{0,0.1,0.8}
\definecolor{redi}{rgb}{0.5,0,0.4}
\definecolor{gri}{rgb}{0.6,1,0}
\definecolor{darkgreen}{rgb}{0.09, 0.35, 0.27}
\definecolor{forest}{rgb}{0.13, 0.55, 0.13}
\renewcommand{\@cite}[2]{[{{\bf #1}\if@tempswa , #2\fi}]}
\renewcommand{\@biblabel}[1]{[{\bf #1}]\hfill}
\newtheorem{thm}{Theorem}[section]
\newtheorem{lm}[thm]{Lemma}
\newtheorem{cl}[thm]{Corollary}
\newtheorem{prop}[thm]{Proposition}
\theoremstyle{remark}
\newtheorem{rmk}[thm]{Remark}
\theoremstyle{definition}
\newtheorem{ex}[thm]{Example}
\newenvironment{E6}[6]{%
{\footnotesize\begin{tabular}{@{}c@{}}
{#1}--{#2}--\lower3.55ex\vbox{\hbox{{#3}\rule{0ex}{2.5ex}}
\hbox{\hspace{0.4ex}\rule{.1ex}{1ex}\rule{0ex}{1.4ex}}\hbox{{#6}\strut}}--{#4}--{#5}
\end{tabular}}}
\newcommand {\be}{{\mathfrak b}}
\newcommand {\gD}{{\mathfrak D}}
\newcommand {\G}{{\mathfrak G}}
\newcommand {\g}{{\mathfrak g}}
\newcommand {\ka}{{\mathfrak k}}
\newcommand {\el}{{\mathfrak l}}
\newcommand {\n}{{\mathfrak n}}
\newcommand {\p}{{\mathfrak p}}
\newcommand {\q}{{\mathfrak q}}
\newcommand {\es}{{\mathfrak s}}
\newcommand {\te}{{\mathfrak t}}
\newcommand {\ut}{{\mathfrak u}}
\newcommand {\fX}{{\mathfrak X}}
\newcommand {\z}{{\mathfrak z}}
\newcommand {\sln}{{\mathfrak {sl}}_n}
\newcommand {\slN}{{\mathfrak {sl}}_N}
\newcommand {\glN}{{\mathfrak {gl}}_N}
\newcommand {\glv}{{\mathfrak {gl}}(\eus V)}
\newcommand {\glbv}{{\mathfrak {gl}}(\BV)}
\newcommand {\spv}{{\mathfrak {sp}}(\VV)}
\newcommand {\spn}{{\mathfrak {sp}}_{2n}}
\newcommand {\sono}{{\mathfrak {so}}_{2n+1}}
\newcommand {\sone}{{\mathfrak {so}}_{2n}}
\newcommand {\son}{{\mathfrak {so}}_{n}}
\newcommand {\soN}{{\mathfrak {so}}_{N}}
\newcommand {\eus}{\EuScript}
\newcommand {\VV}{\eus V}
\newcommand {\esi}{\varepsilon}
\newcommand {\ap}{\alpha}
\newcommand {\lb}{\lambda}
\newcommand {\vp}{\varphi}
\newcommand {\vth}{\vartheta}
\newcommand {\bvth}{\bar\vartheta}
\newcommand {\blb}{{\boldsymbol{\lambda}}}
\newcommand {\bmu}{{\boldsymbol{\mu}}}
\newcommand {\tvp}{\tilde{\varphi}}
\newcommand {\cD}{{\mathcal D}}
\newcommand {\cI}{{\eus I}}
\newcommand {\cJ}{{\eus J}}
\newcommand {\N}{{\mathfrak N}}
\newcommand {\co}{{\mathcal O}}
\newcommand {\sD}{{\mathsf D}}
\newcommand {\BV}{{\mathbb{V}}}
\newcommand {\BZ}{{\mathbb Z}}
\newcommand {\BP}{{\mathbb P}}
\newcommand {\BT}{{\mathbb T}}
\newcommand {\md}{/\!\!/}
\newcommand {\ad}{{\mathrm{ad\,}}}
\newcommand {\Ann}{{\mathrm{Ann\,}}}
\newcommand {\codim}{{\mathrm{codim\,}}}
\newcommand {\codims}{{\mathrm{codim}}}
\newcommand {\cha}{{\mathsf{char}}}
\newcommand {\Lie}{{\mathrm{Lie\,}}}
\newcommand {\Ima}{{\mathsf{Im}}}
\newcommand {\rank}{{\mathrm{rank}}}
\newcommand {\rk}{{\mathsf{rk}}}
\newcommand {\spe}{{\mathsf{Spec\,}}}
\newcommand {\sym}{\mathsf{Sym}}
\newcommand {\sk}{\mathsf{Sk}}
\newcommand {\trdeg}{{\mathrm{trdeg\,}}}
\newcommand {\tri}{{\mathfrak{sl}}_2}
\newcommand {\sltri}{{\mathfrak{sl}}_3}
\newcommand {\GR}[2]{{\textrm{{\sf\bfseries #1}}}_{#2}}
\newcommand {\ov}{\overline}
\newcommand {\un}{\underline}
\newcommand {\beq}{\begin{equation}}
\newcommand {\eeq}{\end{equation}}
\newcommand{\curle}{\preccurlyeq}
\renewcommand{\le}{\leqslant}
\renewcommand{\ge}{\geqslant}
\renewcommand{\lg}{\langle}
\newcommand{\rg}{\rangle}
\newcommand{\bbk}{\Bbbk}
\newcommand {\omin}{\co_{\sf min}}
\newcommand {\osp}{\co_{\sf sph}}
\newcommand {\bco}{\ov{\co}}
\newcommand {\Sec}{\boldsymbol{\sigma}}
\newcommand {\Tan}{\boldsymbol{\tau}}
\newcommand {\CS}{\textrm{{\sl\bfseries CS}}}
\newcommand {\CJ}{\textrm{{\sl\bfseries CJ}}}
\newcommand {\ups}{\Upsilon}
\newcommand {\ess}{{\es_\star}}
\newcommand {\els}{{\el_\star}}
\newcommand{\odin}{\mathrm{{1}\!\! 1}}
\begin{document}
\setlength{\parskip}{2pt plus 4pt minus 0pt}
\hfill {\scriptsize December 30, 2024} 
\vskip1ex

\title[Secant varieties]{Nilpotent orbits and their secant varieties}
\author{Dmitri I. Panyushev}
\address{Higher School of Modern Mathematics MIPT,
Moscow 115184, Russia}
\email{panyush@mccme.ru}
\thanks{}
\keywords{Generic stabiliser, complexity and rank, doubled action}
\subjclass[2020]{17B08, 17B70, 14L30, 14N07}
\begin{abstract}
Let $G$ be a simple algebraic group, $\g=\Lie G$, and $\co$ a nilpotent orbit in $\g$. Let $\CS(\co)$ 
denote the affine cone over the secant variety of $\ov{\BP\co}\subset \BP\g$. Using the theory of doubled
$G$-actions, we describe $\CS(\co)$ for all $\co$. Let $c(\co)$ and $r(\co)$ denote the {\it complexity} 
and {\it rank} of the $G$-variety $\co$. It is proved that $\dim\CS(\co)=2\dim\co-2c(\co)-r(\co)$ and there 
is a subspace $\te_\co$ of a Cartan subalgebra of $\g$ such that $\CS(\co)$ is the closure of 
$G{\cdot}\te_\co$. We compute $c(\co)$ and $r(\co)$ for all nilpotent orbits and show that $\CS(\co)$ is 
the closure of $\Ima(\mu)$, where $\mu: \BT^*(\co)\to \g^*\simeq\g$ is the moment map. It is also shown
that the secant variety of $\ov{\BP\co}$ is defective if and only if $r(\co)<\rk\,G$ if and only if 
$\CS(\co)\ne \g$. 
\end{abstract}
\maketitle

\tableofcontents
\section{Introduction}

\noindent
The ground field $\bbk$ is algebraically closed and $\cha(\bbk)=0$. Let $G$ be a simple algebraic 
group with $\Lie G=\g$ and $\N=\N(\g)$ the nilpotent cone in $\g$. Then $\N/G$ stands for the finite set of 
nilpotent $G$-orbits. Each $\co\in\N/G$ is conical, and $\BP\co\subset\BP\g$ is its projectivisation.
 
For $\co\in\N/G$, let $\Sec(\ov{\BP\co})$ be the {\it secant variety\/} of the projective variety 
$\ov{\BP\co}$. If $\dim\Sec(\ov{\BP\co})$ is less than the {\it expected dimension}, 
$\min\{\dim\BP\g, 2\dim \ov{\BP\co}+1\}$, then $\Sec(\ov{\BP\co})$ is said to be {\it defective}.
The affine cone over $\Sec(\ov{\BP\co})$ in $\g$ is said to be the {\it conical secant 
variety\/} of $\co$, denoted $\CS(\co)$.
In~\cite{omoda}, Omoda describes $\CS(\co)$ for most nilpotent orbits in the
classical Lie algebras via a case-by-case analysis. However, his results for $\g=\son$ are 
incomplete. In this paper, a general approach to $\CS(\co)$ is developed, which exploits the doubled 
$G$-action associated with $\co$ and the notions of complexity and rank of $G$-varieties~\cite{p99}. 
Our main results do not invoke the classification of nilpotent orbits or simple Lie algebras. 

We fix a Borel subgroup $B\subset G$ and a maximal torus $T\subset B$. Then $U=(B,B)$ is the fixed 
maximal unipotent subgroup of $G$. 
Let $c(X)$ (resp. $r(X)$) denote the {\it complexity\/} (resp. {\it rank}) of an irreducible $G$-variety $X$, 
see Section~\ref{sect:2} for details. Then $r(X)\le \rk\, G$. We prove that, for any $\co\in\N/G$, 
\[
        \dim\CS(\co)=2\dim\co-2c(\co)-r(\co) .
\]
This implies that $\Sec(\ov{\BP\co})$ is defective if and only if $\CS(\co)\ne\g$ if and only if 
$r(\co)<\rk\,G$. For each $\co$, we decribe a subalgebra $\te_\co\subset\te=\Lie T$ such that 
$\dim\te_\co=r(\co)$ and
\beq           \label{eq:CS} 
   \CS(\co)=\ov{G{\cdot}\te_\co} . 
\eeq
Our constructions show that the {\it secant\/} and {\it tangential\/} varieties of $\ov{\BP\co}$ coincide. Yet 
another interpretation of $\CS(\co)$ exploits the cotangent bundle of $\co$, $\BT^*(\co)$. Let   
$\mu: \BT^*(\co)\to \g^*\simeq\g$  be the moment map. Then $\CS(\co)$ is the closure of $\Ima(\mu)$.

The proofs rely on the theory of ``doubled actions'' of reductive groups and structure results on related 
generic stabilisers~\cite{p90,p99}. For any $G$-variety $X$, one defines the dual $G$-variety $X^*$. 
Here $X\simeq X^*$ as a variety, but the $G$-actions on $X^*$ is obtained via a twist, see 
Section~\ref{subs:twist}. The diagonal $G$-action on $X\times X^*$ is said to be {\it doubled\/} (with 
respect to the given $G$-action on $X$). If $X$ is irreducible, then the doubled action always has a 
generic stabiliser, say $S_\star$, which is reductive. Furthermore, there is a 
Levi subgroup $L_\star\subset G$ such that $(L_\star,L_\star)\subset S_\star\subset L_\star$. In terms of Lie algebras, this yields 
the decomposition $\el_\star=\es_\star\oplus \te_X$, where $\te_X$ is orthogonal to $\es_\star=\Lie S_\star$. 
For $X=\co\in\N/G$, this construction provides the subalgebra $\te_\co$ occurring in \eqref{eq:CS}. 
If $\es_\star$ is semisimple, then $\es_\star=[\el_\star,\el_\star]$ and $\te_\co$ is the centre of the Levi 
subalgebra $\el_\star$. Therefore, in these cases, $\CS(\co)$ is the closure of the {\it Dixmier sheet\/} $\gD(\el_\star)$ in 
$\g$ associated with $\el_\star$. Moreover, for any $\es_\star$, the dense orbit in $\CS(\co)\cap\N$ is the nilpotent orbit in $\gD(\el_\star)$. In particular, this dense orbit is always Richardson.

Since $\co$ is quasiaffine, there is a more direct construction of $\te_\co$~\cite{p99}. Let 
$\fX_+\subset\te^*$ be the monoid of dominant weights with respect to $(B,T)$ and
$\bbk[\co]=\bigoplus_{\lb\in\fX_+}\bbk[\co]_{\lb}$ the 
{\it isotypic decomposition}~\cite[Ch.\,II, \S\,3.1]{kr84}. Let 
$\Gamma_\co:=\{\lb\in \fX_+\mid \bbk[\co]_{\lb}\ne 0\}$ be the {\it weight} (aka {\it rank}) {\it monoid} of 
$\co$ and let $\lg\Gamma_\co\rg\subset \te^*$ denote its $\bbk$-linear span. Then 
$\dim \lg\Gamma_\co\rg=r(\co)$ and,
upon the usual identification of $\te$ and $\te^*$, we have $\te_\co=\lg\Gamma_\co\rg$. 

Results of~\cite{omoda} show that, for most nilpotent orbits $\co=G{\cdot}e$ in the {\sl classical\/} Lie 
algebras, $\CS(\co)$ depends only on the rank of matrix $e$ in the tautological representation of $\g$. 
This suggests to consider the partition of the poset $\N/G$ into a disjoint union of certain posets 
$\{\ups_\xi\}_{\xi\in \Xi}$, which are defined via partitions. We prove that, for each $\ups_\xi$, the generic 
isotropy subalgebra $\ess$ is the same for all $\co\in \ups_\xi$. The proof relies on computations 
with $\tri$-triples and $\BZ$-gradings related to the minimal and maximal elements of the posets 
$\{\ups_\xi\}_{\xi\in\Xi}$. This also implies that the map $\co\mapsto\CS(\co)$ is constant on each 
$\ups_\xi$. As a by-product, we obtain the values of $c(\co)$ and $r(\co)$ for all nilpotent orbits, which 
complements results for the spherical nilpotent orbits in~\cite{p94}. Then we describe the canonical embedding of $\ess$ in $\g$, which allows us to explicitly determine $\lg\Gamma_\co\rg$ and $\te_\co$,
and point the dense orbit in $\CS(\co)\cap\N$.

The paper is organised as follows. 
Section~\ref{sect:2} contains preliminaries on doubled actions, nilpotent orbits, and secant varieties. 
The main structure results on $\CS(\co)$ for $\co\in\N/G$ are presented in Section~\ref{sect:main}. In 
Sections~\ref{sect:4}, we recall results of~\cite{omoda}, with some complements for $\g=\soN$, and 
determine the maximal orbits with defective secant varieties. In Section~\ref{sect:comput-c-r}, we deal 
with the posets $\{\ups_\xi\}$ and related properties of $\CS(\co)$ and $\ess$ for the classical Lie 
algebras. Explicit results for the exceptional Lie algebras are gathered in Section~\ref{sect:defective-exc}. 
Section~\ref{sect:data} contains explicit data on the canonical embedding of $\ess$ and the dense orbit in 
$\CS(\co)\cap\N$. In Section~\ref{sect:appl},  we give a curious property of posets $\{\ups_\xi\}$, point out 
certain constraints on possible values of $c(\co)$ and $r(\co)$, and briefly discuss higher secant varieties of nilpotent orbits.

\noindent
\un{Main notation}. 
Throughout, $G$ is a connected reductive algebraic group and $\g=\Lie G$. 
\begin{itemize}
\item[--] $G^x$ is the isotropy group of $x\in\g$ in $G$ and $\g^x=\Lie G^x$ is the centraliser of $x$ in $\g$;
\item[--]  \ $\Delta$ is the root system of $\g$ with respect to $T$;
\item[--] \  $\Pi=\{\ap_1,\dots,\ap_n\}\subset \Delta$ is the set of simple roots corresponding to $B$ \ ($n=\rk\,\g$);
\item[--] \ $\varpi_1,\dots,\varpi_n$ \ are the fundamental weights corresponding to $\Pi$;
\item[--] \ $Q^o$ is the identity component of an algebraic group $Q$;
\item[--] \ If $Q$ acts on a variety $X$, then we also write $(Q:X)$.
\end{itemize}
\noindent
Algebraic groups are denoted by capital Roman letters and their Lie algebras are denoted by the 
corresponding small Gothic letters, e.g. $\Lie B=\be$, $\Lie S=\es$, and $\Lie U=\ut$. 
Our main references for invariant theory are~\cite{kr84, vp}. We refer to~\cite{CM} for terminology 
and basic results on nilpotent orbits in $\g$.

\section{Doubled actions, nilpotent orbits, and secant varieties}    
\label{sect:2}

\noindent
Let $\bbk[X]$ denote the algebra of regular functions on a variety $X$. If $X$ is irreducible, then 
$\bbk(X)$ is the field of rational functions on $X$. If $X$ is acted upon by an algebraic group $Q$, then 
$\bbk[X]^Q$ and $\bbk(X)^Q$ are the subalgebra and subfield of invariant functions, respectively. 
For $x\in X$, let $Q^x$ denote the {\it stabiliser} (or {\it isotropy group}) of $x$ in $Q$. Then 
$\q^x:=\Lie Q^x$. A stabiliser $Q^x$ is said to be {\it generic}, if there is a dense open subset 
$\Omega\subset X$ such that $x\in\Omega$ and $Q^y$ is $Q$-conjugate to $Q^x$ for all $y\in\Omega$. 
Then any $y\in\Omega$ is said to be {\it generic}. Whenever it is necessary to keep track of the group,
we refer to  $Q$-{\it generic points}. A Lie algebra $\q^y$ $(y\in\Omega)$ is said to be
a {\it generic isotropy subalgebra} (={\sf g.i.s.}).
If $Q$ is reductive and $X$ is affine and smooth, then generic stabilisers always exist. But this is not true 
in general~\cite[\S\,7]{vp}.

Let $X$ be an irreducible $G$-variety. The {\it complexity} of $X$ is 
$c_G(X) =\min_{x\in X}\codim_X B{\cdot}x$. To define the {\it rank} of $X$, $r_G(X)$, one considers the 
multiplicative group $\bbk(X)^{(B)}$ of all $B$-semiinvariants in $\bbk(X)$. If $F\in\bbk(X)^{(B)}$, then 
$b{\cdot}F=\lb_F(b)F$ for all $b\in B$ and a character $\lb_F\in \fX(B)\simeq \fX(T)\simeq \BZ^{\rk\g}$. 
Then $\G(X):=\{\lb_F\mid F\in\bbk(X)^{(B)}\}$ is a free abelian group and $r_G(X)$ is defined to be the 
rank of $\G(X)$. 
One can prove that $r_G(X) =\trdeg \bbk(X)^U-\trdeg \bbk(X)^B$~\cite[Theorem\,1.2.9]{p99}.

By the Rosenlicht theorem (see e.g. \cite[\S\,2.3]{vp}), we also have
\beq   \label{eq:Rosen}
    \text{$c_G(X)=\trdeg\, k(X)^{B}$ and $c_G(X)+r_G(X)=\trdeg\, k(X)^{U}$}.
\eeq
As usual, $X$ is said to be $G$-{\it spherical}, if $B$ has a dense orbit in $X$, i.e., $c_G(X)=0$. If the 
reductive group is clear from the context, then we merely write $c(X)$ and $r(X)$.

If a reductive group $G$ acts on an affine variety $Z$, then $\bbk[Z]^G$ is finitely-generated and the 
affine variety $Z\md G:=\spe\bbk[Z]^G$ is the {\it categorical quotient}, see~\cite[\S\,4]{vp}. The 
inclusion $\bbk[Z]^G\subset\bbk[Z]$ yields the {\it quotient morphism\/} $\pi_Z:Z\to Z\md G$, which is 
onto. Each fibre $\pi^{-1}_Z(\xi)$, $\xi\in Z\md G$, contains a unique closed orbit, i.e., $Z\md G$ 
parametrises the closed $G$-orbits in $Z$. 

\subsection{Twisted and doubled actions~\cite{p90,p99}}
\label{subs:twist}
Let  $\vth\in \mathsf{Aut}(G)$ be an involution of {\it maximal rank}, i.e., there is a maximal torus 
$T\subset G$ such that $\vth(t)=t^{-1}$ for all $t\in T$. 
Without loss of generality, we may assume that $T$ is our fixed maximal torus. Then $T=\vth(B)\cap B$.
Let $X$ be a $G$-variety with the action $(g,x)\mapsto g{\cdot}x$, where $g\in G$ and $x\in X$. The 
{\it twisted\/} action on $X$ is defined by 
$(g,x)\mapsto g\ast x=\vth(g){\cdot}x$. The variety $X$ equipped with the twisted $G$-action is said to be {\it dual\/} to 
$X$ and denoted by $X^*$. This terminology is justified by the fact that if $X=\VV$ is a $G$-module, then
$\VV^*$ is the dual $G$-module in the usual sense.

For the given action $(G:X)$, the {\it doubled action\/} is the diagonal $G$-action on $X\times X^*$. 
In other words, $g{\cdot}(x,y)=(g{\cdot}x,\vth(g){\cdot}y)$ for all $(x,y)\subset X\times X^*$. Note that the
usual {diagonal} $\Delta_X=\{(x,x\mid x\in X)\}\subset X\times X^*$ is {\bf not} $G$-stable w.r.t. the 
doubled action. The theory of doubled actions provides relations between various generic stabilisers and 
their connections to the rank and complexity of $X$. An element $t\in\te$ is said to be {\it dominant}, 
if $\ap(t)\ge 0$ for all $\ap\in\Pi$.
Then the following holds (cf.~\cite[Theorem\,1.2.2]{p99}):

The actions $(G:X\times X^*)$, $(B:X)$,  and $(U:X)$ have a generic stabiliser, and there is a $G$-generic point $z=(x,x)\in \Delta_X\subset X\times X^*$ such that

\begin{itemize}
\item[($\eus P_1$)]  $x\in X$ is also $B$-generic and $U$-generic;
\item[($\eus P_2$)]  for $S_\star:=G^z=G^x\cap \vth(G^x)$, there is a dominant $t\in \te$ such that 
$(G^t,G^t)\subset S_\star \subset G^t$;
\item[($\eus P_3$)]   $U_\star:=S_\star\cap U=U^x$ and $B_\star:=S_\star\cap B=B^x$. 
\end{itemize}
The generic stabiliser $S_\star$ for the doubled action $(G:\co\times\co^*)$ satisfying 
$(\eus P_1)$--$(\eus P_3)$ is said to be {\it canonical\/} w.r.t.~the pair $(B,T)$. We also say that this yields the {\it canonical embedding} $S_\star \hookrightarrow G$ (or $\ess\hookrightarrow \g$).
It follows from $(\eus P_2)$ and $(\eus P_3)$ that 
\begin{itemize}
\item[($\eus P_4$)] 
$S_\star$ is reductive, $U_\star$ is a maximal 
unipotent subgroup of $S$,  $(S_\star\cap T)^o$ is a maximal torus in $S_\star^o$,  and $B_\star^o$ is a 
Borel subgroup of $S_\star^o$. 
\item[($\eus P_5$)]  $B{\cdot}S_\star=P$ is a parabolic subgroup of $G$, $P\cap G^x=S_\star$, and
$P\cap\vth(P)$ is a standard Levi subgroup of $P$.
\end{itemize}
By $(\eus P_1)$ and \eqref{eq:Rosen}, one has
\[ 
 c(X)+r(X)=\dim X-\dim U+\dim U^x   \ \text{ and } \  c(X)=\dim X-\dim B+\dim B^x.
\] 
Taking either the sum or the difference of these equalities and using ($\eus P_4$), we obtain
$2c(X)+r(X)=2\dim X-\dim G+\dim S_\star$ \ and \ $r(X)=\rk\, G-\rk\, S_\star$. This implies that 
\beq     \label{eq:max-dim-orb}
      \max_{z\in X\times X^*}\dim G{\cdot}z=\dim G-\dim S_\star=2\dim X-2c(X)-r(X) .
\eeq
Hence $r(X)=\rk\,G$ if and only if $S_\star$ is finite, and then $c(X)=\dim X-\dim B$.

Let $\bbk[X]=\bigoplus_{\lb\in\fX_+}\bbk[X]_{\lb}$ be the {\it isotypic decomposition}~\cite[Ch.\,II, \S\,3.1]{kr84}. Then $\Gamma_X=\{\lb\in \fX_+\mid \bbk[X]_{\lb}\ne 0\}$ is the {\it weight monoid\/} of $X$. If
$X$ is quasiaffine, then $\G(X)$ is the group  generated by $\Gamma_X$~\cite[1.3.10]{p99}. Moreover, 
$\Gamma_X$ determines 
the canonical generic stabiliser $S_\star$ (or $B_\star$). That is, for the group 
$B_\star=T_\star{\cdot}U_\star$ and $\ut_\star=\Lie U_\star$, one has
\begin{gather}  \label{eq:t-ast}
\text{$T_\star=\{t\in T \mid \gamma(t)=1 \ \ \forall \gamma\in \Gamma_X\}$ \qquad \cite[1.2.9]{p99}} \\
\label{eq:u-ast}
\text{$\displaystyle \ut_\star=\bigoplus_{\beta\in\Delta^+, \ (\beta, \Gamma_X)=0} \g_\beta$ \qquad \qquad \cite[1.3.12]{p99}}
\end{gather}
Conversely, these formulae show that the group $\G(X)$ is determined by the canonical stabiliser
$S_\star$. 
It follows from~\eqref{eq:t-ast} that $r(X)>0$ for the quasiaffine varieties. Translating~\eqref{eq:t-ast} into
the Lie algebra setting shows that, for $\te_X=\lg\Gamma_X\rg\subset\te^*\simeq\te$, one has
$\te=\te_X\oplus\te_\star$ and $(\te_X)^\perp=\te_\star$. Then using~\eqref{eq:u-ast}, we conclude
that the centraliser of $\te_X$ in $\g$ equals $\es_\star\oplus\te_X=\el_\star$.

\subsection{Nilpotent orbits}    
\label{subs:nilp}
We regard $\N/G$ as poset {w.r.t.}~the closure relation, i.e., $\co_1\curle\co_2$ if and only if
$\co_1\subset\ov{\co_2}$. Then the regular nilpotent orbit, $\co_{\sf reg}$, is the maximal element of
$\N/G$.
For the classical Lie algebras, the structure of the poset $(\N/G,\curle)$ is determined 
by partitions~\cite[Ch.\,6.2]{CM}. If $\g$ is exceptional, then the Hasse diagrams of $(\N/G,\curle)$ can 
be found in~\cite[IV.2]{spalt}.

For $e\in\N\setminus\{0\}$, let $\{e,h,f\}$ be an $\tri$-triple in $\g$, i.e., $[h,e]=2e$, $[e,f]=h$, and 
$[h,f]=-2f$~\cite[Ch.\,6,\,\S2]{t41}. The semisimple element $h$ is called a {\it characteristic\/} of $e$. All 
$\tri$-triples with a given $h$ are $G^h$-conjugate and all $\tri$-triples with a given $e$ are 
$G^e$-conjugate, see~\cite[3.4]{CM}.
Without loss of generality, one may assume that $h\in\te$ and $\ap(h)\ge 0$ for all $\ap\in\Pi$. Then $h$ 
is said to be the {\it dominant characteristic} (w.r.t. chosen $\te$ and $\Delta^+$). By a celebrated result 
of {Dynkin} (1952), one then has $\ap(h)\in\{0,1,2\}$~\cite[Ch.\,6,\,\S\,2,\,Prop\,2.2]{t41}. 
The {\it weighted Dynkin diagram\/} of $\co=G{\cdot}e\subset\N$, denoted
$\eus D(\co)$ or $\eus D(e)$, is the Dynkin diagram of $\g$ equipped with numerical marks 
$\{\ap(h)\}_{\ap\in\Pi}$. 

Let $\g=\bigoplus_{i\in\BZ}\g(i)$ be the $\BZ$-grading determined by $h$, i.e.,  
$\g(j)=\{v\in \g \mid [h,v]=jv\}$. It will be referred to as the $(\BZ,h)$-{\it grading\/} of $\g$. Then 
$\p=\g({\ge}0)=\bigoplus_{i\ge 0}\g(i)$ is the parabolic subalgebra associated with $e$, $\el:=\g(0)$ is a 
Levi subalgebra of $\p$, and $e\in\g(2)$. Write $L\subset P$ for the associated connected subgroups of 
$G$. Clearly, each $\g(i)$ is an $L$-module. It is known that $G^e\subset P$, $L{\cdot}e$ is dense in
$\g(2)$, and $K:=L^e=L\cap G^e$ is a maximal reductive subgroup of $G^e$. The nilpotent orbit 
$G{\cdot}e$ is said to be {\it distinguished}, if $K$ is finite, i.e., $\dim\g(0)=\dim\g(2)$. The orbit 
$G{\cdot}e$ is {\it even}, if $\g(1)=\{0\}$, i.e., $\ap(h)\in\{0,2\}$ for all $\ap\in\Pi$.

The $K$-module $\g(2i)$ is orthogonal for each $i\in\BZ$~\cite[Prop.\,1.2]{aif99}. 
Let $S\subset K$ be a generic stabiliser for the $K$-module $\g(2)$.
Then $S$ is reductive and the complexity and rank of $\co=G{\cdot}e$ can be computed in terms of the 
$(\BZ,h)$-grading as follows.

\begin{thm}[{\cite[Theorem\,2.3]{p94}}]   \label{thm:p94}
For any nontrivial nilpotent orbit, we have
\[   c_G(\co)=c_L(L/K)+c_S\bigl(\g({\ge}3)\bigr) \ \text{ and } \ 
     r_G(\co)=r_L(L/K)+  r_S\bigl(\g({\ge}3)\bigr) .
\]    
\end{thm}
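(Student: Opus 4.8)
The plan is to reduce the pair $(G:\co)$ in two steps to a setting in which complexity and rank become visibly additive, tracking $B$- and $U$-invariants together so that, via \eqref{eq:Rosen}, both asserted identities drop out at once. The first step passes from $G$ to the Levi $L$: since $G^e\subseteq P$, the orbit $\co=G/G^e$ is the associated bundle $G\times_P(P/G^e)$ over the partial flag variety $G/P$, whose open $B$-orbit has isotropy (a conjugate of) the Borel subgroup $B_L:=B\cap L$ of $L$ — here one uses that $h$ is dominant, so the positive roots killed by $h$ are exactly the roots of $L$, whence $B\cap P^{-}=B\cap L$ with no extra unipotent part. Peeling off the complementary unipotent group $U\cap P_u$, which acts by translations on a direct factor and hence contributes nothing to the transcendence degrees of $U$- or $B$-invariants, a short computation (modulo the conjugation that enters the induced $B_L$-action) gives $c_G(\co)=c_L(P/G^e)$ and $r_G(\co)=r_L(P/G^e)$.

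The second step identifies the $L$-variety $P/G^e$. Via $pG^e\mapsto\Ad(p)e$ it is $L$-equivariantly the orbit $P{\cdot}e$. Now $P=LP_u$, and $[\g({\ge}1),e]=\g({\ge}3)$ by $\tri$-representation theory, so $P_u{\cdot}e=e+\g({\ge}3)$; since $\Ad(L)$ preserves each graded piece, $P{\cdot}e=(L{\cdot}e)\oplus\g({\ge}3)$. As $L{\cdot}e$ is dense in $\g(2)$ with $L^e=K$, one obtains an $L$-isomorphism $P/G^e\isom(L/K)\times\g({\ge}3)$, the action on the second factor being the adjoint one.

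It remains to split complexity and rank over this product. I would invoke the addition formula — a consequence of $(\eus P_1)$–$(\eus P_4)$ together with $\codim_{Y\times Z}B_L{\cdot}(y,z)=\codim_Y B_L{\cdot}y+\codim_Z B_L^{y}{\cdot}z$ — asserting that for $L$-varieties $Y,Z$ one has $c_L(Y\times Z)=c_L(Y)+c_{S_\star(Y)}(Z)$ and $r_L(Y\times Z)=r_L(Y)+r_{S_\star(Y)}(Z)$, where $S_\star(Y)$ is the canonical generic stabiliser of the doubled $L$-action on $Y\times Y^{*}$. With $Y=L/K$ and $Z=\g({\ge}3)$ everything reduces to the single claim $S_\star(L/K)=S$. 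Since $L{\cdot}e$ is dense in $\g(2)$, the varieties $L/K$ and the $L$-module $\g(2)$ have the same canonical generic stabiliser, so $S_\star(L/K)=S_\star(\g(2))=L^{e_1}\cap L^{e_2}$ for generic $e_1\in\g(2)$, $e_2\in\g(2)^{*}$; a generic $e_1$ lies in the dense $L$-orbit, so $L^{e_1}$ is conjugate to $K$, whence $S_\star(\g(2))$ is a generic stabiliser of $K$ acting on $\g(2)^{*}$. Finally $\g(2)^{*}\cong\g(-2)\cong\g(2)$ as $K$-modules — the last isomorphism because $\g(2)$ is an orthogonal $K$-module \cite{aif99} — so that stabiliser is exactly $S$, and the three steps combine into the two asserted formulae.

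The step I expect to be the main obstacle is the identification $S_\star(L/K)=S$: one must check that the ``canonical'' choices demanded by $(\eus P_1)$–$(\eus P_5)$ survive the reduction $G\rightsquigarrow L$ and the passage to a product, and, above all, that the twisting involution $\vth_L$ entering $S_\star(\g(2))=L^{e_1}\cap\vth_L(L^{e_1})$ does not obstruct replacing $L^{e_1}$ by $K$ and reading off a generic $K$-stabiliser — this is precisely the point at which the orthogonality of the $K$-modules $\g(2i)$ is used. A subsidiary technicality is making the $G$-to-$L$ reduction of the first paragraph entirely rigorous: justifying that $U\cap P_u$ contributes nothing to the relevant transcendence degrees, and correctly bookkeeping the conjugation that enters the induced $B_L$-action on $P/G^e$.
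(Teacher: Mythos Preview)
The paper does not prove this statement: Theorem~\ref{thm:p94} is quoted from \cite[Theorem\,2.3]{p94} without proof and used as a tool throughout, so there is no in-paper argument to compare your sketch against.

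That said, your outline is the standard one and matches the strategy of the original reference: parabolic reduction from $G$ to $L$ via the open Bruhat cell in $G/P$ (yielding $c_G(\co)=c_L(P/G^e)$ and the analogue for rank), the $L$-equivariant splitting $P{\cdot}e\cong(L/K)\times\g({\ge}3)$ coming from $[\g({\ge}1),e]=\g({\ge}3)$, and the product formula for complexity and rank in terms of the doubled-action generic stabiliser. Your identification $S_\star(L/K)=S$ via self-duality of the $K$-module $\g(2)$ is correct and is precisely where \cite[Prop.\,1.2]{aif99} is needed. The technical worries you flag --- the $w_0$-conjugation in the induced $B_L$-action and the bookkeeping for $U\cap P_u$ --- are real but routine; there is no gap in the strategy.
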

\noindent
This result reduces computing the complexity and rank of nilpotent orbits to the similar task for 
{\bf affine} homogeneous spaces and {\bf representations} of reductive groups.
Here $S$ is a generic stabiliser for the doubled action $(L: (L/K)\times (L/K)^*)$. Hence
$r_L(L/K)=\rk\,L-\rk\,S=\rk\,G-\rk\,S$ and $c_L(L/K)=\dim L+\dim S-2\dim K$.
There are also useful complements to this result. 
\\ \indent (1) \ a generic stabiliser $S_\star\subset S$ for the doubled action $(S: \g({\ge}3)\times \g({\ge}3)^*)$ is 
also a generic stabiliser for $(G: \co\times\co^*)$. Moreover, if $S$  is chosen to be canonical 
w.r.t.~$(B\cap L, T)$ and $S_\star$ is canonical w.r.t.~$(B\cap S,T\cap S)$, then 
$S_\star$ is canonical for $(G:\co\times\co^*)$ w.r.t.~$(B,T)$, see~\cite[Theorem\,1.2(iii)]{p94}.
\\ \indent
(2) \ $r_G(\co)=r_L \bigl( \g({\ge}2)\bigr)$ and $c_G(\co)=c_L \bigl(\g({\ge}2)\bigr)$~\cite[Theorem\,4.2]{aif99};

In explicit examples, the nilpotent orbits in the classical Lie algebras $\slN,\soN,\spn$ are represented by
partitions $\blb$ of $N$ or $2n$. If $\co_\blb\subset\N$ corresponds to $\blb$, then
there are explicit formulae for $\dim\co_\blb$, $\eus D(\co_\blb)$, and $K$ via $\blb$, see
Chapters~5,6 in~\cite{CM}. For the orbits in the exceptional Lie algebras, we use the standard labels, see 
Tables in~\cite[Ch.\,8.4]{CM}.

\begin{rmk}   \label{rem:exc-fonts}
To distinguish the exceptional algebraic groups,  their Lie algebras, and labels of their nilpotent orbits,
we use {\sl different fonts}. For instance, $\GR{E}{7}$ is the exceptional group, $\eus E_7$ is its Lie algebra,
and, say,  $\mathsf E_6(a_1)$ and $2{\mathsf A}_1$ are nilpotent $\GR{E}{7}$-orbits in $\eus E_7$.
\end{rmk}

\subsection{Secant and tangential varieties}    
\label{subs:secant}
Let $X\subset \BP^n=\BP\eus V$ be a projective variety. The {\it secant variety\/} of $X$, $\Sec(X)$,
is  the closure of the union of all secant lines to $X$. That is, if $x,y\in X$ are different points and 
$\BP^1_{xy}\subset \BP(\eus V)$ is the line through $x$ and $y$, then
\[
    \Sec(X)=\ov{\bigcup_{x,y\in X} \BP^1_{x,y}} ,
\]
see e.g.~\cite[Chapter\,5]{land}. If $X$ is irreducible, then so is $\Sec(X)$, and the expected dimension of 
$\Sec(X)$ is $\min\{2\dim X+1, n\}$. If $\dim\Sec(X) < \min\{2\dim X+1, n\}$,  
then $\Sec(X)$ is said to be {\it defective}. Then  $\delta_X=2\dim X+1-\dim\Sec(X)$ is
called  the {\it defect\/}  of $\Sec(X)$ or the {\it secant defect}\/ of $X$.

The {\it tangential variety\/} of $X$, denoted $\Tan(X)$, is the closure of
$\bigcup_x T_{x, X}\subset \BP^n$, where $p$ runs over the smooth points of $X$ and $T_{x,X}$ is
the embedded projective tangent space of $X$ at $x$, cf.~\cite[Chapter\,8.1]{land}.
Then $\dim\Tan(X)\le \min\{2\dim X, n\}$ and $\Tan(X)\subset\Sec(X)$.

Let $\widehat X\subset \eus V$ denote the affine cone over $X$. That is, if 
$p:\eus V\setminus\{0\}\to \BP(\eus V)$ is the canonical map, then $\widehat X=p^{-1}(X)\cup\{0\}$. 
We say $\widehat {\Sec(X)}$ is the {\it conical secant variety} of $X$ (or $\widehat X$). 
In terms of affine varieties, $\Sec(X)$ is defective, if 
$\dim\widehat {\Sec(X)}<\min\{\dim\eus V, 2\dim\widehat X\}$. Then
the formula for the secant defect of $X$ reads 
\beq          \label{eq:aff-defect}
    \delta_X=2\dim\widehat X- \dim \widehat {\Sec(X)}.
\eeq
Likewise, $\widehat{\Tan(X)}\subset\eus V$ is the {\it conical tangential variety} of $X$.

We only deal with these varieties if $\eus V=\g$, $\co\in\N/G$, and $X=\ov{\BP\co}$. Then 
$\widehat{\ov{\BP\co}}=\bco$ and we write $\CS(\co)$ in place of $\widehat{\Sec(\ov{\BP\co})}$.
Abusing the language, we also say that that $\co$ is {\it defective}, if the secant variety 
$\Sec(\ov{\BP\co})\subset \BP\g$ is defective.

\section{The secant variety of a nilpotent orbit} 
\label{sect:main}
\noindent
From now on, $G$ is assumed to be simple.
Recall that $\vth$ is an involution of maximal rank of $G$. Let $\bar\vth$ denote
the induced involution of $\g$. Then we have $\bar\vth(g{\cdot}x)=\vth(g){\cdot}\bar{\vth}(x)$ for all
$g\in G$ and $x\in \g$.

\begin{lm}[{\cite[Lemma\,2.10]{ls79}}]   \label{lm:theta-stab}
Let $\co$ be a nilpotent orbit in $\g$. Then $\bar\vth(\co)=\co$.
\end{lm}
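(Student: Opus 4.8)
The plan is to exploit the fact that an involution $\vth$ of maximal rank is, up to inner automorphisms, essentially the Chevalley involution, and that the induced involution $\bvth$ of $\g$ sends each root vector to a multiple of the opposite one. More precisely, since $\vth$ has maximal rank, there is a maximal torus $T$ with $\vth(t)=t^{-1}$, so $\bvth$ acts as $-\mathrm{id}$ on $\te$; consequently $\bvth$ permutes the root spaces by $\g_\ap\mapsto\g_{-\ap}$. The key structural input I would use is that any two nilpotent orbits related by an automorphism of $\g$ coincide when that automorphism is \emph{inner}; the content of the lemma is therefore that $\bvth$, although a priori only in $\mathrm{Aut}(\g)$, acts \emph{as an inner automorphism on the nilpotent cone}, i.e. does not permute orbits nontrivially.

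First I would reduce to a statement about $\tri$-triples. Pick $e\in\co$ and complete it to an $\tri$-triple $\{e,h,f\}$ with $h\in\te$ dominant. Then $\{\bvth(e),\bvth(h),\bvth(f)\}=\{\bvth(e),-h,\bvth(f)\}$ is again an $\tri$-triple, but now with characteristic $-h$. Apply $-1$ on the Weyl-group side: there is $w\in W$ with $w(-h)=h$ (namely $w=-w_0$ composed appropriately, or simply use that $h$ and $-h$ have the same orbit closure data), realised by some $n\in N_G(T)$, so $\mathrm{Ad}(n)\bvth(e)$ has characteristic $h$. Since all $\tri$-triples with a fixed characteristic $h$ are $G^h$-conjugate (quoted in \S\ref{subs:nilp}), there is $g\in G^h$ with $g{\cdot}(\mathrm{Ad}(n)\bvth(e))=e$. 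Hence $\bvth(e)\in G{\cdot}e=\co$, which gives $\bvth(\co)=\co$ because $\bvth$ is a bijection preserving orbit dimensions.

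The step I expect to be the main obstacle is pinning down the interaction of $\bvth$ with the choice of $h$: one must verify that $-h$ really is $W$-conjugate to $h$ and that this conjugation can be promoted to a $G$-conjugation compatible with the triple structure, rather than merely checking that $\co$ and $\bvth(\co)$ have the same dimension (which is automatic but insufficient, since distinct orbits can share a dimension). A clean way around this is to recall the classification-free fact that for \emph{any} automorphism $\sigma$ of $\g$ one has $\sigma(\co_{\mathsf{reg}})=\co_{\mathsf{reg}}$ and, more generally, that the \emph{Bala--Carter label} of a nilpotent orbit is intrinsic to the Lie algebra; since $\bvth$ is a Lie algebra automorphism it preserves these labels, hence preserves every orbit. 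I would use this as the backbone and relegate the $\tri$-triple manipulation above to an explicit illustration. Either route is short; the essential point is that $\bvth\in\mathrm{Aut}(\g)$ acts trivially on the set $\N/G$ of nilpotent orbits, which for a simple $\g$ follows from the fact that the group of diagram automorphisms acts on orbit labels in a way that fixes each label — and $\bvth$, being $-\mathrm{id}$ on $\te$, corresponds to the longest element $w_0$ composed with a diagram automorphism, both of which fix nilpotent orbits.
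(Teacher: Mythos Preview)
The paper does not supply a proof; the lemma is quoted from Lusztig--Spaltenstein. So your argument must stand on its own.

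There is a genuine gap at step~3. You claim there exists $w\in\eus W$ with $w(-h)=h$, but this fails precisely when $-1\notin\eus W$, i.e.\ for $\g$ of type $\eus A_n$ $(n\ge 2)$, $\eus D_{2m+1}$, $\eus E_6$. In those cases the dominant element in the $\eus W$-orbit of $-h$ is $-w_0(h)$ (with $w_0$ the longest element), and asking whether $-w_0(h)=h$ amounts to asking whether the weighted Dynkin diagram $\eus D(\co)$ is invariant under the diagram automorphism induced by $-w_0$. That statement is true, but it is the content of the lemma rather than an input to it. Your fallback appeals---that Bala--Carter labels are intrinsic, or that diagram automorphisms fix every orbit---are either classification-dependent or circular; note in particular that outer automorphisms do \emph{not} in general fix nilpotent orbits (the two very even orbits in $\eus D_{2m}$ are swapped), so one cannot invoke ``diagram automorphisms fix labels'' without argument.

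The clean repair bypasses $\eus W$ entirely. Your $\tri$-triple set-up is correct: $\bvth(e)$ has characteristic $\bvth(h)=-h$. Now observe that $\{f,-h,e\}$ is also an $\tri$-triple (with $f$ in the nilpositive slot), so $f$ too has characteristic $-h$. By the conjugacy result quoted in Section~\ref{subs:nilp}, all $\tri$-triples with characteristic $-h$ are $G^{-h}=G^{h}$-conjugate, whence $\bvth(e)$ and $f$ are $G$-conjugate. But $e$ and $f$ already lie in the same $G$-orbit, being conjugate inside the image of $SL_2$ in $G$ determined by $\{e,h,f\}$. Therefore $\bvth(e)\in G{\cdot}f=G{\cdot}e=\co$. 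You had the right framework; the fix is to compare $\bvth(e)$ with $f$ rather than with $e$.
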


Consider the doubled action $(G:\co\times\co^*)$ associated with a nilpotent orbit $\co$.
\begin{lm}    \label{lm:doubled-diag}
The doubled action  $(G:\co\times\co^*)$ is equivalent to the diagonal action $(G:\co\times\co)$.
\end{lm}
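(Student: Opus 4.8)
The plan is to exhibit an explicit $G$-equivariant isomorphism between $\co\times\co^*$ (with the doubled action) and $\co\times\co$ (with the diagonal action), using the involution $\bar\vth$. The natural candidate is the map $\Phi\colon\co\times\co\to\co\times\co^*$ given by $\Phi(x,y)=(x,\bar\vth(y))$. First I would check that this is well-defined: by Lemma~\ref{lm:theta-stab}, $\bar\vth(\co)=\co$, so $\bar\vth$ restricts to a bijective morphism $\co\to\co$ (with inverse $\bar\vth$, since $\bar\vth$ is an involution), and hence $\Phi$ is an isomorphism of varieties with inverse $(x,z)\mapsto(x,\bar\vth(z))$.

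The key step is equivariance. On the source we use the diagonal action $g\cdot(x,y)=(g{\cdot}x,g{\cdot}y)$; on the target we use the doubled action $g{\cdot}(x,z)=(g{\cdot}x,\vth(g){\cdot}z)$. We must verify $\Phi(g{\cdot}x,g{\cdot}y)=g{\cdot}\Phi(x,y)$, i.e.\ $(g{\cdot}x,\bar\vth(g{\cdot}y))=(g{\cdot}x,\vth(g){\cdot}\bar\vth(y))$. This is exactly the identity $\bar\vth(g{\cdot}y)=\vth(g){\cdot}\bar\vth(y)$ recorded at the start of Section~\ref{sect:main}, which is the defining compatibility of $\bar\vth$ with $\vth$. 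So equivariance is immediate once that identity is invoked.

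I expect no serious obstacle here; the statement is essentially a bookkeeping consequence of Lemma~\ref{lm:theta-stab} together with the transport formula $\bar\vth(g{\cdot}x)=\vth(g){\cdot}\bar\vth(x)$. The only point that warrants a word of care is that the twisted action on $\co^*$ is $g\ast z=\vth(g){\cdot}z$ by definition (Section~\ref{subs:twist}), so that the composite $\co\xrightarrow{\bar\vth}\co^*$ intertwines the ordinary $G$-action on $\co$ with the twisted one on $\co^*$; this is precisely what makes $\Phi$ turn the diagonal action into the doubled action. One could equivalently phrase the argument as: the involution $\bar\vth$ gives a $G$-isomorphism $\co\isom\co^*$, hence $\mathrm{id}\times\bar\vth$ gives $(\co\times\co)\isom(\co\times\co^*)$ carrying the diagonal action to the doubled action. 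Either phrasing yields the claim.
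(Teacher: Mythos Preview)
Your proposal is correct and is essentially the paper's own proof: the paper simply writes down the map $\psi:\co\times\co^*\to\co\times\co$, $(x,y)\mapsto(x,\bar\vth(y))$, and cites $\bar\vth(\co)=\co$. Your $\Phi$ is the inverse of this $\psi$, and you have spelled out the equivariance check (via $\bar\vth(g{\cdot}y)=\vth(g){\cdot}\bar\vth(y)$) that the paper leaves implicit.
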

\begin{proof}
Since $\bar\vth(\co)=\co$, the morphism $\psi: \co\times\co^*\to \co\times\co$,
$(x,y)\stackrel{\psi}{\mapsto} (x,\bar\vth(y))$, provides a required equivalence.
\end{proof}

Therefore, dealing with generic points and generic stabilisers, we may consider the usual diagonal
$G$-action on  $\co\times\co$ in place of the doubled action related to $(G:\co)$. In this case,
the diagonal $\Delta_\co\subset \co\times\co^*$ should be replaced with the {\it twisted diagonal}
\beq            \label{eq:theta-diag}
    \Delta_{\co,\bvth}=\{x,\bar{\vth}(x)\mid x\in \co\}\subset \co\times\co .
\eeq
Translating properties $(\eus P_1)$--$(\eus P_3)$ into this setting, one concludes that there is a 
$G$-generic point $(x,\bar\vth(x))\in \co\times\co$ such that $S_\star=G^x\cap G^{\bar\vth(x)}$ is the canonical generic stabiliser and
$x$ is both $B$-generic and $U$-generic.
\\ 
\indent Let $\BT_x(\co)\subset \g$ be the embedded tangent space at $x\in\co$.

\begin{thm}    \label{thm:main-dim}
For $\co\in \N/G$, let $S_\star$ be a generic stabiliser for $(G:\co\times\co)$. Then
\[ 
     \dim\CS(\co)=2\dim\co-2c(\co)-r(\co)=\dim G-\dim S_\star . 
\] 
\end{thm}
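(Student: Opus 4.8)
The plan is to prove the two equalities separately, with the second one being essentially bookkeeping and the first one carrying the geometric content. For the equality $2\dim\co-2c(\co)-r(\co)=\dim G-\dim S_\star$, I would simply invoke the general identity \eqref{eq:max-dim-orb} applied to $X=\co$: since $S_\star$ is a generic stabiliser for $(G:\co\times\co^*)$ (equivalently, by Lemma~\ref{lm:doubled-diag}, for $(G:\co\times\co)$), the formula $\max_{z\in\co\times\co^*}\dim G{\cdot}z=\dim G-\dim S_\star=2\dim\co-2c(\co)-r(\co)$ is already recorded in Section~\ref{sect:2}. So the real work is to identify $\dim\CS(\co)$ with the dimension of a generic $G$-orbit in $\co\times\co^*\cong\co\times\co$.

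For that, first I would set up the rational map $\Phi\colon \co\times\co\dashrightarrow\BP\g$ (or better, work at the level of affine cones) sending a generic pair $(x,y)$ to the secant line $\BP^1_{x,y}$, or rather to the plane $\bbk x+\bbk y\subset\g$; the union of these planes, closed up, is exactly $\CS(\co)$ by definition of the secant variety. Concretely, consider the map $\sigma\colon \co\times\co\times\bbk^2\to\g$, $(x,y,(a,b))\mapsto ax+by$; its image is dense in $\CS(\co)$. Hence $\dim\CS(\co)=\dim\overline{\Ima\sigma}$. The target $\g$ carries the adjoint $G$-action, and $\sigma$ is $G$-equivariant for the diagonal action on $\co\times\co$ (trivial on the $\bbk^2$ factor) — here one must be slightly careful about which twist is in play, but since $\bco$ is stable under both $\Ad G$ and the involution, and a generic secant is spanned by $x$ and $\bvth$ of something, the equivariance goes through as in Lemma~\ref{lm:doubled-diag}. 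Now a standard fibre-dimension argument: a generic fibre of $\sigma$ over a point $v=ax+by$ has dimension at least that of a generic $G$-orbit in $\co\times\co$ that $\sigma$ collapses onto the $G$-orbit of $v$, minus $\dim G{\cdot}v$; conversely the image is a union of $G$-orbits and $G$ acts on $\co\times\co$ with generic orbit dimension $\dim G-\dim S_\star$. The cleanest route is to show that $\dim\overline{\Ima\sigma}=\max_{z\in\co\times\co}\dim G{\cdot}z$; the inequality $\le$ comes from the fact that $\CS(\co)$ is covered by $G$-orbits whose generic member is spanned by a generic pair, so $\CS(\co)=\overline{G{\cdot}(\bbk x+\bbk\bvth(x))}$ for a generic $x$, and the latter has dimension at most $\dim G{\cdot}x - \dim(\text{stab of the plane}) + 2 \le \dim G - \dim S_\star$ once one checks the plane-stabiliser contains $S_\star$ up to finite index and absorbs the $2$-dimensional scaling; the inequality $\ge$ comes because a generic orbit $G{\cdot}(x,\bvth(x))$ maps finitely-to-one (after quotienting the obvious $\bbk^\times$) to $\CS(\co)$ through $(x,y)\mapsto x+y$.

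The step I expect to be the main obstacle is the precise comparison between the stabiliser in $G$ of a generic secant \emph{plane} $\pi=\bbk x+\bbk\bvth(x)$ and the canonical generic stabiliser $S_\star=G^x\cap G^{\bvth(x)}$. One inclusion, $S_\star\subset G^\pi$, is immediate. For the reverse up to the right correction, I would use property $(\eus P_2)$: $S_\star$ sits between $(G^t,G^t)$ and $G^t$ for a dominant $t\in\te$, so $G^\pi$ differs from $S_\star$ only by elements acting on $\pi$ by a nontrivial torus character, and these contribute exactly the $2$ extra dimensions (the generic orbit through the plane, viewed in the Grassmannian, has dimension $\dim G-\dim S_\star-2$, and restoring the scaling of the two basis vectors gives back $\dim G-\dim S_\star$ for $\overline{G{\cdot}\pi}$ viewed as a subvariety of $\g$ swept out). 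One must also rule out the ``defective'' degeneration where a generic secant line already lies in the cone over a smaller-dimensional $G$-stable set — but that is automatically accounted for, since we only claim $\dim\CS(\co)=\dim G-\dim S_\star$ and do not claim non-defectivity. Finally, one invokes $(\eus P_1)$ to guarantee that the generic point used is simultaneously $B$-, $U$-, and doubled-action generic, so that all the dimension counts refer to the same open dense subset, and then \eqref{eq:max-dim-orb} closes the argument.
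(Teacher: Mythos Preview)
Your handling of the second equality via Eq.~\eqref{eq:max-dim-orb} and Lemma~\ref{lm:doubled-diag} is correct and matches the paper. For the first equality, however, the secant-map approach has a real gap. In your $\ge$ direction you assert that $(x',y')\mapsto x'+y'$ is generically finite on a generic $G$-orbit after quotienting an ``obvious $\bbk^\times$''; but since the action is diagonal, this map sends $g{\cdot}(x,y)$ to $g{\cdot}(x+y)$, so its image is the single orbit $G{\cdot}(x+y)$ and the fibres have dimension $\dim G^{x+y}-\dim S_\star$. You have not shown this to be zero, and in fact it is not: it equals $r(\co)>0$ (cf.\ the later Corollary~\ref{cor:gis-CS}), so the image is a proper closed subset of $\CS(\co)$. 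In your $\le$ direction, the inclusion $S_\star\subset G^\pi$ for the secant plane $\pi=\bbk x+\bbk y$ only yields $\dim\CS(\co)=\dim\ov{G{\cdot}\pi}\le\dim G-\dim S_\star+2$; to absorb the extra $2$ you would need a $2$-torus in $G$ stabilising $\pi$ and acting by independent characters on $\bbk x$ and $\bbk y$, but since $G$ is simple no such torus comes from the centre, and you give no construction of one.

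The paper's proof is a two-line application of Terracini's Lemma, which you do not mention. For generic $(x,y)\in\co\times\co$ one has
\[
\dim\CS(\co)=\dim\bigl(\BT_x(\co)+\BT_y(\co)\bigr)=\dim\bigl([\g,x]+[\g,y]\bigr).
\]
The Killing form then gives $\bigl([\g,x]+[\g,y]\bigr)^\perp=\g^x\cap\g^y=\g^{(x,y)}$, whence $\dim\CS(\co)=\dim\g-\dim\g^{(x,y)}=\dim G-\dim S_\star$. The identity $[\g,x]^\perp=\g^x$, combined with Terracini, is the missing key idea; it converts the secant-dimension question directly into the orbit-dimension question for the diagonal action $(G:\co\times\co)$, without any fibre analysis of the secant map or comparison of plane stabilisers.
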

\begin{proof}
By Terracini's Lemma~\cite[Chap.\,5.3]{land}, 
$\dim\CS(\co)=\dim \bigl(\BT_x(\co)+\BT_y(\co)\bigr)$ for a generic point $(x,y)\in\co\times\co$.
In this case, $\BT_x(\co)=[\g,x]$ and
\[ 
     \bigl(\BT_x(\co)+\BT_y(\co)\bigr)^\perp=([\g,x]+[\g,y])^\perp=\g^x\cap\g^y=\g^{(x,y)} . 
\] 
If $z=(x,y)\in\co\times\co$ is generic, then $\dim\g^{z}$ is minimal possible and hence
\\[.6ex]
\centerline{
$\dim \bigl(\BT_x(\co)+\BT_y(\co)\bigr)=\dim\g-\dim\g^z=\max_{z\in\co\times\co} \dim G{\cdot}z$.}
 
\noindent By Lemma~\ref{lm:doubled-diag}, the diagonal action  $(G:\co\times\co)$ is 
equivalent to the doubled one. Therefore $\dim\g^z=\dim S_\star$.
Using Eq.~\eqref{eq:max-dim-orb},  we then obtain
\[
  \max_{z\in\co\times\co} \dim G{\cdot}z=\dim G-\dim S_\star =2\dim\co-2c(\co)-r(\co).  
  \qedhere
\]
\end{proof}

\begin{cl}    \label{cor:sec=g}
We have $\CS(\co)=\g$ if and only if $r(\co)=\rk\,\g$. In particular, this holds for the
distinguished $G$-orbits.  
\end{cl}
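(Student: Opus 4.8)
The plan is to deduce everything from Theorem~\ref{thm:main-dim} together with the standing facts about the canonical generic stabiliser $S_\star$ recalled in Section~\ref{subs:twist}. Since $\CS(\co)$ is a $G$-stable closed subvariety of $\g$ containing $\bco$, and $\dim\g=\dim G-\rk\,\g$ (as $G$ is simple, hence $\dim\te=\rk\,\g$ and $\dim\g=\dim G$ once we identify $\g\simeq\g^*$ — more precisely $\dim\g=\dim\n^-+\dim\te+\dim\n^+$), the equality $\CS(\co)=\g$ is equivalent to $\dim\CS(\co)=\dim\g$. By Theorem~\ref{thm:main-dim}, $\dim\CS(\co)=\dim G-\dim S_\star$, so $\CS(\co)=\g$ iff $\dim S_\star=\dim G-\dim\g=\rk\,\g$.

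Next I would invoke property $(\eus P_2)$: there is a dominant $t\in\te$ with $(G^t,G^t)\subset S_\star\subset G^t$. Thus $G^t$ is a Levi subgroup containing $S_\star$ with the same derived group, so $\rk\,S_\star=\rk\,G^t=\rk\,G=\rk\,\g$ always, and $\dim S_\star=\dim[S_\star,S_\star]+\rk\,S_\star=\dim[G^t,G^t]+\rk\,\g$. Hence $\dim S_\star=\rk\,\g$ iff $[S_\star,S_\star]$ is trivial iff $[G^t,G^t]$ is trivial iff $G^t=T$, i.e. $S_\star$ is a torus of full rank, equivalently $S_\star^o=(S_\star\cap T)^o$ is a maximal torus. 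On the other hand, the displayed identity $r(X)=\rk\,G-\rk\,S_\star$ from Section~\ref{subs:twist} shows $r(\co)=\rk\,G-\rk\,S_\star$; but I also want the characterization in terms of $r(\co)$ rather than $\rk\,S_\star$, so I should instead use the formula $2c(\co)+r(\co)=2\dim\co-\dim G+\dim S_\star$ (also from Section~\ref{subs:twist}). Combining with $\dim\CS(\co)=2\dim\co-2c(\co)-r(\co)$ gives directly $\dim\CS(\co)=\dim G-\dim S_\star$, and then $\CS(\co)=\g\iff\dim S_\star=\rk\,\g$. To convert $\dim S_\star=\rk\,\g$ into $r(\co)=\rk\,\g$: from $r(\co)=\rk\,G-\rk\,S_\star$ and $\rk\,S_\star\le\dim S_\star$ with equality iff $S_\star^o$ is a torus, together with $(\eus P_4)$ (which says $(S_\star\cap T)^o$ is a maximal torus of $S_\star^o$, so $\rk\,S_\star=\rk\,G^t$ and $S_\star^o$ is a torus iff $G^t=T$ iff $\dim S_\star=\rk\,\g$), we get $\dim S_\star=\rk\,\g\iff\rk\,S_\star=\rk\,\g$ and $S_\star^o$ a torus $\iff r(\co)=0$? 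No — that is wrong. The correct bridge is: $r(\co)=\rk\,G-\rk\,S_\star$ always, while the sentence right after \eqref{eq:max-dim-orb} in the excerpt states $r(X)=\rk\,G$ iff $S_\star$ is finite, and then $\dim S_\star=0$; so $\CS(\co)=\g\iff\dim S_\star=\rk\,\g$, and I must reconcile this with ``$r(\co)=\rk\,\g\iff S_\star$ finite''. These agree precisely when $\rk\,\g=0$ unless — so I have mis-stated something; the resolution is that in Theorem~\ref{thm:main-dim} the relevant $S_\star$ is the stabiliser of the diagonal action $(G:\co\times\co)$, which by Lemma~\ref{lm:doubled-diag} equals the doubled-action stabiliser, and for \emph{that} group the excerpt's own computation gives $\dim\CS(\co)=\dim G-\dim S_\star$ and simultaneously $r(\co)=\rk\,G-\rk\,S_\star$; thus $\CS(\co)=\g\iff\dim S_\star=\rk\,\g\iff(\text{by }(\eus P_4))\ S_\star^o\text{ is a maximal torus}\iff\rk\,S_\star=\rk\,G\text{ and }\dim S_\star=\rk\,S_\star$. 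But $(\eus P_2)$ forces $\rk\,S_\star=\rk\,G$ only when $t$ is chosen so that $G^t$ is a Levi of full rank — which it is, any centraliser of a torus element has full rank. So $\rk\,S_\star=\rk\,G$ always(!), hence $r(\co)=0$ always — contradiction with $r(\co)>0$ for quasiaffine $\co$. So I need to be careful: $(\eus P_2)$ gives $\rk\,G^t=\rk\,G$ but $S_\star$ sits strictly between $(G^t,G^t)$ and $G^t$, and $\rk\,S_\star$ can be anything from $\rk\,G$ down to $\rk[G^t,G^t]$; the point is $S_\star\cap T$ need not be all of $T$. So $\rk\,S_\star=\rk\,G\iff S_\star\supset T\iff G^t=S_\star$ with $G^t$ of full rank, which is not automatic.

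Having straightened that out, the clean argument I would present is: by Theorem~\ref{thm:main-dim}, $\CS(\co)=\g$ iff $\dim\CS(\co)=\dim\g$; since $G$ is simple, $\dim\g-\dim G=-\rk\,\g$, so $\dim\CS(\co)=\dim G-\dim S_\star=\dim\g$ iff $\dim S_\star=\rk\,\g$. Combined with the identities $2c(\co)+r(\co)=2\dim\co-\dim G+\dim S_\star$ and $\dim\CS(\co)=2\dim\co-2c(\co)-r(\co)$ one sees $\dim S_\star=\rk\,\g$ is equivalent to $\CS(\co)=\g$; and the displayed equivalence in the excerpt ``$r(X)=\rk\,G$ iff $S_\star$ is finite, and then $c(X)=\dim X-\dim B$'' must be read with the relevant stabiliser, so the honest statement I will lean on is $r(\co)=\rk\,G-\rk\,S_\star$, whence $r(\co)=\rk\,G$ iff $\rk\,S_\star=0$ iff $S_\star$ finite iff $\dim S_\star=0$, giving again $\dim\CS(\co)=\dim G=\dim\g$, i.e. $\CS(\co)=\g$. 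Conversely if $\CS(\co)=\g$ then $\dim S_\star=\rk\,\g$, and by $(\eus P_2)$, $(\eus P_4)$ this forces $S_\star^o$ to be a maximal torus, so $\rk\,S_\star=\rk\,\g$, hence $r(\co)=0$ — but $r(\co)>0$ for the quasiaffine $\co$, so in fact the only consistent reading makes $\dim S_\star=\rk\,\g$ \emph{fail} unless $S_\star$ is finite and $\rk\,\g$... This tangle is exactly the main obstacle: \textbf{pinning down the precise relation between $\dim S_\star$, $\rk\,S_\star$, $r(\co)$ and $\rk\,\g$} so that ``$\CS(\co)=\g$'' translates cleanly to ``$r(\co)=\rk\,\g$''. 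I expect the resolution is simply $\dim\CS(\co)=\dim G-\dim S_\star$ and, using $(\eus P_4)$ that $\rk\,S_\star=\rk\,S_\star$ together with $r(\co)=\rk\,G-\rk\,S_\star$ and the fact that $S_\star$ finite $\iff \dim S_\star=0 \iff$ (since $\rk\,S_\star\le\dim S_\star\le\rk\,\g$ would need justifying) — so I would resolve it by noting $\dim G-\dim S_\star\le\dim G-\rk\,S_\star=\dim G-(\rk\,G-r(\co))=\dim\g-\rk\,\g+r(\co)\le\dim\g$, with equality throughout iff $\dim S_\star=\rk\,S_\star$ (i.e. $S_\star^o$ a torus) and $r(\co)=\rk\,\g$; but $S_\star^o$ a torus and $r(\co)=\rk\,\g=\rk\,G-\rk\,S_\star$ forces $\rk\,S_\star=0$, so $S_\star$ is finite. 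Thus $\CS(\co)=\g\iff\dim S_\star=0\iff r(\co)=\rk\,\g$. Finally, for the distinguished orbits: by the discussion in Section~\ref{subs:nilp}, $K=L^e=L\cap G^e$ is finite, and then one checks $S_\star$ is finite — indeed $S_\star\subset K$ up to conjugacy via Theorem~\ref{thm:p94} and its complement~(1), since $r_L(L/K)=\rk\,G-\rk\,S$ with $S\subset K$ finite forces $\rk\,S=0$, hence $\rk\,S_\star=0$; so $r(\co)=\rk\,\g$ and $\CS(\co)=\g$.

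\begin{proof}
Since $G$ is simple, $\dim\g=\dim G-\rk\,G+\rk\,\g$ only if... rather, $\dim\g=\dim G$ and $\dim\te=\rk\,G=\rk\,\g=:\ell$. By Theorem~\ref{thm:main-dim}, $\dim\CS(\co)=\dim G-\dim S_\star$, where $S_\star$ is a (canonical) generic stabiliser for $(G:\co\times\co)$. By $(\eus P_4)$, $(S_\star\cap T)^o$ is a maximal torus of $S_\star^o$, so $\rk\,S_\star\le\dim S_\star$ with equality if and only if $S_\star^o$ is a torus. Recall from Section~\ref{subs:twist} that $r(\co)=\rk\,G-\rk\,S_\star$, hence
\[
   \dim\CS(\co)=\dim G-\dim S_\star\le\dim G-\rk\,S_\star=\dim G-\ell+r(\co)=\dim\g-\bigl(\ell-r(\co)\bigr)\le\dim\g ,
\]
using $r(\co)\le\ell$. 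Equality $\dim\CS(\co)=\dim\g$ therefore holds if and only if $\dim S_\star=\rk\,S_\star$ and $r(\co)=\ell$. But if $r(\co)=\ell=\rk\,G-\rk\,S_\star$, then $\rk\,S_\star=0$, so $S_\star^o$ is a torus of rank $0$, i.e. $S_\star$ is finite; conversely $S_\star$ finite gives $\rk\,S_\star=0$, hence $r(\co)=\ell$, and trivially $\dim S_\star=0=\rk\,S_\star$. Since $\CS(\co)$ is a closed irreducible subvariety of $\g$, $\dim\CS(\co)=\dim\g$ is equivalent to $\CS(\co)=\g$. Thus
\[
   \CS(\co)=\g\iff r(\co)=\rk\,\g .
\]

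Finally, suppose $\co=G{\cdot}e$ is distinguished, so that $K=L\cap G^e$ is finite. In the notation of Theorem~\ref{thm:p94} and its complement~(1), $S\subset K$ is a generic stabiliser for the $K$-module $\g(2)$, hence $S$ is finite and $\rk\,S=0$; the canonical generic stabiliser $S_\star\subset S$ for $(G:\co\times\co)$ is then finite as well, so $r(\co)=\rk\,G-\rk\,S_\star=\rk\,G=\rk\,\g$. By the equivalence just proved, $\CS(\co)=\g$.
\end{proof}
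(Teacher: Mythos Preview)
Your formal proof (the part between \verb|\begin{proof}| and \verb|\end{proof}|) is correct and follows essentially the same route as the paper: both deduce from Theorem~\ref{thm:main-dim} that $\codim_\g\CS(\co)=\dim S_\star$, and then use reductivity of $S_\star$ together with $r(\co)=\rk\,G-\rk\,S_\star$ to conclude that $\CS(\co)=\g\iff S_\star$ is finite $\iff r(\co)=\rk\,\g$. The paper does this in one line; your chain of inequalities $\dim G-\dim S_\star\le\dim G-\rk\,S_\star\le\dim\g$ is a more circuitous path to the same point, and the lengthy confused preamble (with the false start ``$\rk\,S_\star=\rk\,G$ always'') together with the scratch phrase ``only if\dots rather'' in the first line of the proof should simply be deleted.

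One small difference worth noting: for the distinguished case the paper just cites \cite[Corollary~2.4]{p94} for $r(\co)=\rk\,\g$, whereas you give a self-contained argument using the material already in Section~\ref{subs:nilp} (namely $S_\star\subset S\subset K$ with $K$ finite, via complement~(1) to Theorem~\ref{thm:p94}). Your argument is perfectly valid and arguably preferable since it avoids an external reference.
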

\begin{proof}
By Theorem~\ref{thm:main-dim}, we have $\codims_\g\CS(\co)=\dim S_\star$. Hence $\CS(\co)=\g$ if 
and only if $S_\star$ is finite. Since $S_\star$ is reductive, finiteness is equivalent to that  $\rk\,S_\star=0$,
i.e., $r(\co)=\rk\,\g$.
For a distinguished orbit $\co$, one has $c(\co)=\dim\co-\dim\be$ and $r(\co)=\rk\,\g$, 
see~\cite[Corollary\,2.4]{p94}.
\end{proof}

\noindent
(The assertion on distinguished orbits appears in \cite[Sect.\,3]{omoda}.)
It follows from~\cite{omoda} and Corollary~\ref{cor:sec=g} that there are many non-distinguished orbits in 
the classical Lie algebras such that $r(\co)=\rk\,\g$. Actually, this is also true for the exceptional Lie 
algebras (see Section~\ref{sect:defective-exc}), and
most of the nilpotent orbits in the simple Lie algebras have this property. 

\begin{cl}   \label{cor:pro-defekt}
The secant variety $\Sec(\ov{\BP\co})$ is defective if and only if $r(\co)<\rk\,\g$. In this case,  the defect 
of  $\Sec(\ov{\BP\co})$ equals $\delta_{\ov{\BP\co}}=2c(\co)+r(\co)$.
\end{cl}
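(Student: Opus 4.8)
The plan is to derive Corollary~\ref{cor:pro-defekt} directly from Theorem~\ref{thm:main-dim} and Corollary~\ref{cor:sec=g} by comparing the actual dimension of $\CS(\co)$ with the expected dimension of the conical secant variety, using the affine reformulation of defectivity from Section~\ref{subs:secant}. Recall from~\eqref{eq:aff-defect} that, with $X=\ov{\BP\co}$ and $\widehat X=\bco$, the secant variety $\Sec(\ov{\BP\co})$ is defective precisely when $\dim\CS(\co)<\min\{\dim\g,\,2\dim\bco\}$, and that in that case the secant defect equals $\delta_{\ov{\BP\co}}=2\dim\bco-\dim\CS(\co)$. Since $\co$ is conical, $\dim\bco=\dim\co$, so everything is expressed in terms of $\dim\co$, $c(\co)$, $r(\co)$, and $\rk\,\g$.

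First I would dispose of the case $\dim\g\le 2\dim\co$. Here the expected dimension is $\dim\g$, and by Corollary~\ref{cor:sec=g} we have $\CS(\co)=\g$ iff $r(\co)=\rk\,\g$; so $\Sec(\ov{\BP\co})$ is defective (i.e. $\CS(\co)\ne\g$, equivalently $\dim\CS(\co)<\dim\g$) iff $r(\co)<\rk\,\g$, which is the claim. Next, for the case $\dim\g>2\dim\co$, the expected dimension is $2\dim\co$, and by Theorem~\ref{thm:main-dim} we have $\dim\CS(\co)=2\dim\co-2c(\co)-r(\co)$, so the secant defect would be $2c(\co)+r(\co)$. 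The only subtlety is that $r(\co)>0$ always holds because $\co$ is quasiaffine (this was noted after~\eqref{eq:t-ast}), so $2c(\co)+r(\co)>0$, whence $\Sec(\ov{\BP\co})$ is genuinely defective in this regime. I would then observe that in this regime $\CS(\co)\subsetneq\g$ automatically, so $r(\co)<\rk\,\g$ by Corollary~\ref{cor:sec=g}, which keeps the statement ``defective iff $r(\co)<\rk\,\g$'' consistent across both regimes; conversely, if $\dim\g>2\dim\co$ then $S_\star$ cannot be finite (as $\dim G-\dim S_\star\le 2\dim\co<\dim\g$), so $r(\co)<\rk\,\g$ necessarily, and both sides of the equivalence hold.

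Assembling: in all cases $\Sec(\ov{\BP\co})$ is defective $\iff$ $\CS(\co)\ne\g$ $\iff$ $r(\co)<\rk\,\g$, where the last equivalence is Corollary~\ref{cor:sec=g}; and whenever it is defective, the defect is $\delta_{\ov{\BP\co}}=2\dim\co-\dim\CS(\co)=2c(\co)+r(\co)$ by Theorem~\ref{thm:main-dim} (valid also when $\dim\g\le 2\dim\co$, since the expected dimension there is $\dim\g=\dim\bco$... which actually forces one to re-examine the formula — see below). The main obstacle I anticipate is precisely reconciling the defect formula in the regime $\dim\g\le 2\dim\co$: there the ``expected dimension'' in the affine sense is $\min\{\dim\g,2\dim\co\}=\dim\g$, so the honest affine defect is $\dim\g-\dim\CS(\co)=\dim S_\star=2c(\co)+r(\co)-(2\dim\co-\dim G+\dim S_\star)$, and one must check using $2c(\co)+r(\co)=2\dim\co-\dim G+\dim S_\star$ that this indeed collapses to $2c(\co)+r(\co)$ exactly when $\dim\g=2\dim\co-$ something — i.e. I expect the clean statement $\delta_{\ov{\BP\co}}=2c(\co)+r(\co)$ to hold unconditionally once one notes that defectivity forces $2\dim\co-2c(\co)-r(\co)=\dim\CS(\co)<\dim\g$, hence the relevant minimum is $2\dim\co$, and the bookkeeping goes through; verifying this edge case carefully is the one genuinely non-formal step.
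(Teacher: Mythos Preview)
Your approach is correct and uses exactly the same ingredients as the paper --- Theorem~\ref{thm:main-dim}, Corollary~\ref{cor:sec=g}, the quasi-affineness of $\co$ giving $r(\co)>0$, and Eq.~\eqref{eq:aff-defect}. The equivalence ``defective $\iff \CS(\co)\ne\g \iff r(\co)<\rk\,\g$'' is argued correctly in both regimes.

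However, the ``edge case'' you flag at the end is a phantom. Re-read the definition in Section~\ref{subs:secant}: the defect is \emph{defined} as $\delta_X=2\dim X+1-\dim\Sec(X)$ (equivalently, Eq.~\eqref{eq:aff-defect} gives $\delta_X=2\dim\widehat X-\dim\widehat{\Sec(X)}$), \emph{regardless} of which term achieves the minimum $\min\{2\dim X+1,n\}$. So once $\co$ is defective, $\delta_{\ov{\BP\co}}=2\dim\co-\dim\CS(\co)=2c(\co)+r(\co)$ holds by Theorem~\ref{thm:main-dim} with no case distinction. Your attempt to compute an ``honest affine defect'' as $\dim\g-\dim\CS(\co)$ in the regime $\dim\g\le 2\dim\co$ is based on a different convention for defect than the one the paper adopts.

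The paper's proof is accordingly much shorter than yours: it simply observes that $r(\co)>0$ forces $\dim\CS(\co)<2\dim\co$ for every $\co$, and then both assertions follow immediately from Eq.~\eqref{eq:aff-defect} and Corollary~\ref{cor:sec=g} without splitting into the two regimes $\dim\g\lessgtr 2\dim\co$. Your case analysis is not wrong, just unnecessary once you lead with that inequality.
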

\begin{proof}
Since $r(X)>0$ for the quasi-affine $G$-varieties, we have $2\dim\co>\dim\CS(\co)$ for all
$\co\in\N/G$. Then the assertion readily follows from Eq.~\eqref{eq:aff-defect} and Corollary~\ref{cor:sec=g}.
\end{proof}

\begin{rmk}   \label{rem:Fulton-Hansen}
By Theorem~\ref{thm:main-dim}, one has $\dim\CS(\co)<2\dim\co$, i.e., 
$\dim\Sec(\ov{\BP\co})< 2\dim\BP\co+1$. Together with the renowned Fulton--Hansen theorem 
(see e.g.~\cite[Theorem\,10.1.14]{SR}), this implies that $\Tan(\ov{\BP\co})=\Sec(\ov{\BP\co})$ for all 
$\co$. Using our methods, we also obtain this equality in Theorem~\ref{thm:main-closure} below.
\end{rmk}
 
\begin{ex}    \label{ex:max-sph-full}
Every simple Lie algebra $\g$ has a unique maximal spherical nilpotent orbit 
$\osp$~\cite[Sect.\,6.1]{aif99}. Let $\eus W=\eus W(\g)$ denote the {\it Weyl group\/} of $\g$. If $\eus W$ 
contains $-1$, then $\dim\osp=\dim\be$ and $r(\osp)=\rk\,\g$. Then $\CS(\osp)=\g$. As is well known, 
$-1\in \eus W$ if and only if $\g\not\in \eus A_n \ (n{\ge} 2)$, $\eus D_{2n+1} \ (n{\ge} 2)$, $\eus E_6$. 
\\ \indent
\textbullet \ For the exceptional algebras $\eus E_7, \eus E_8, \eus F_4$, and $\eus G_2$, the orbit
$\osp$ equals $4\mathsf A_1, 4\mathsf A_1, \mathsf A_1+\tilde{\mathsf A}_1$, and $\tilde{\mathsf A}_1$, 
respectively. 
\\ \indent
\textbullet \ 
For the classical series with $-1\in \eus W(\g)$, the partitions corresponding to $\osp$ are
\begin{itemize}
\item[--] \ $(3,2^{2m})$ \ for $\eus B_{2m+1}$ $(m\ge 1)$;
\item[--] \ $(3,2^{2m-2},1,1)$ \ for $\eus B_{2m}$ $(m\ge 2)$;
\item[--] \ $(2^{m})$ \ for $\eus C_{m}$ $(m\ge 1)$;
\item[--] \ $(3,2^{2m-2},1)$ \ for $\eus D_{2m}$ $(m\ge 2)$.
\end{itemize}
\end{ex}

\begin{ex}    \label{ex:max-sph-less} If $-1\not\in \eus W(\g)$, then the following holds:
\begin{itemize}
\item for $\eus A_{2m-1}$, the partition of $\co_{\sf sph}$ is $(2^m)$ and $r(\co_{\sf sph})=m$;
\item for $\eus A_{2m}$, the partition of $\co_{\sf sph}$ is $(2^m,1)$ and $r(\co_{\sf sph})=m$;
\item for $\eus D_{2m+1}$, the partition of $\co_{\sf sph}$ is $(3,2^{2m-2},1^3)$ and $r(\co_{\sf sph})=2m$;
\item for $\eus E_6$, one has $\co_{\sf sph}=3\mathsf A_1$ and $r(\co_{\sf sph})=4$.
\end{itemize}
In these cases, $\osp$ is defective.
\end{ex}

\begin{prop}   \label{prop:r=r'}
Suppose that\/ $\co',\co\in\N/G$, \ $\co'\subset\bco$, and  $r(\co')=r(\co)$. Then 
\beq    \label{eq:sovpad}
         \dim\co-\dim\co'=c(\co)-c(\co')
\eeq 
and $\CS(\co)=\CS(\co')$.
\end{prop}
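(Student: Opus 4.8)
The plan is to derive \eqref{eq:sovpad} from the dimension formula in Theorem~\ref{thm:main-dim} and then upgrade the numerical coincidence of dimensions to an actual equality of varieties using the inclusion $\co'\subset\bco$.

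\emph{Step 1: the dimension equality.} Apply Theorem~\ref{thm:main-dim} to both $\co$ and $\co'$:
\[
  \dim\CS(\co)=2\dim\co-2c(\co)-r(\co),\qquad
  \dim\CS(\co')=2\dim\co'-2c(\co')-r(\co').
\]
Since $\co'\subset\bco$, every secant line of $\ov{\BP\co'}$ is a limit of secant lines of $\ov{\BP\co}$, so $\Sec(\ov{\BP\co'})\subset\Sec(\ov{\BP\co})$ and hence $\CS(\co')\subset\CS(\co)$; in particular $\dim\CS(\co')\le\dim\CS(\co)$. Feeding in $r(\co')=r(\co)$, this inequality becomes $2\dim\co'-2c(\co')\le 2\dim\co-2c(\co)$, i.e. $\dim\co-\dim\co'\ge c(\co)-c(\co')$. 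For the reverse inequality I would invoke the monotonicity of complexity under the closure order: if $\co'\subset\bco$ then $c(\co')\le c(\co)$ and, more precisely, the codimension of a generic $B$-orbit cannot jump by more than the drop in dimension, which gives $c(\co)-c(\co')\le\dim\co-\dim\co'$. (If no such inequality is available ready-made, one can instead argue directly: $\dim\co-\dim\co'$ and $c(\co)-c(\co')$ are both equal to $\codim_{\bco}\bco'$ minus, respectively, $0$ and the drop in generic $B$-orbit dimension; semicontinuity of $B$-orbit dimensions along $\bco'\subset\bco$ forces the latter drop to be nonnegative and at most $\dim\co-\dim\co'$.) Combining the two inequalities yields \eqref{eq:sovpad}.

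\emph{Step 2: from equal dimensions to equal varieties.} By Step~1 and the hypothesis $r(\co')=r(\co)$, the displayed formulas give $\dim\CS(\co')=\dim\CS(\co)$. Together with the inclusion $\CS(\co')\subset\CS(\co)$ observed above, and the fact that both $\CS(\co)$ and $\CS(\co')$ are irreducible (being affine cones over $\Sec$ of irreducible projective varieties, cf.~Section~\ref{subs:secant}), an irreducible closed subvariety of the same dimension as the ambient irreducible variety must coincide with it. Hence $\CS(\co')=\CS(\co)$.

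\emph{Main obstacle.} The one nontrivial input is the inequality $c(\co)-c(\co')\le\dim\co-\dim\co'$ for $\co'\subset\bco$, i.e. that $B$-orbit dimensions behave semicontinuously along the boundary stratum $\bco'$ inside $\bco$. This is standard (upper semicontinuity of fibre dimension for the $B$-action map $B\times\bco\to\bco$), but it must be stated carefully because $\bco$ is singular and $\co'$ is a boundary orbit; alternatively, one can sidestep it entirely by noting that \eqref{eq:aff-defect} and Remark~\ref{rem:Fulton-Hansen} already give $\dim\CS(\co)<2\dim\co$, and then the inclusion $\CS(\co')\subset\CS(\co)$ plus the rank hypothesis is enough to force $\dim\CS(\co')=\dim\CS(\co)$ provided one independently knows \eqref{eq:sovpad} — so in fact \eqref{eq:sovpad} and $\CS(\co)=\CS(\co')$ are logically intertwined and it is cleanest to prove the dimension identity \eqref{eq:sovpad} first from the complexity/rank side and then read off the variety equality.
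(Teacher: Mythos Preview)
Your Step~2 is fine and matches what the paper does (inclusion $\CS(\co')\subset\CS(\co)$, equal dimensions, irreducibility). The problem is in Step~1: both of your arguments produce the \emph{same} inequality, so you never get the reverse.

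Concretely, write $d=\dim\co-c(\co)$ and $d'=\dim\co'-c(\co')$ for the dimensions of generic $B$-orbits in $\co$ and $\co'$. The inclusion $\CS(\co')\subset\CS(\co)$ together with $r(\co')=r(\co)$ gives $d'\le d$. Semicontinuity of $B$-orbit dimension along $\co'\subset\bco$ \emph{also} gives $d'\le d$: orbit dimensions can only drop on the boundary, not increase. So your ``reverse inequality'' paragraph actually reproves the first inequality (indeed you end it with ``$c(\co)-c(\co')\le\dim\co-\dim\co'$'', which is $d'\le d$ again). Nothing you have written forces $d\le d'$, and your ``Main obstacle'' paragraph is circular: it proposes to deduce \eqref{eq:sovpad} from $\dim\CS(\co')=\dim\CS(\co)$, which is exactly what \eqref{eq:sovpad} is being used to prove.

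The paper closes this gap with a structural argument rather than an inequality sandwich. Since $\bco'\subset\bco$ is a closed $G$-subvariety, the restriction $\bbk[\bco]\twoheadrightarrow\bbk[\bco']$ is surjective and $G$-equivariant, so every weight in $\Gamma_{\co'}$ already occurs in $\Gamma_\co$; hence $\lg\Gamma_{\co'}\rg\subset\lg\Gamma_\co\rg$. The hypothesis $r(\co')=r(\co)$ says these linear spans have the same dimension, so they coincide. By \eqref{eq:t-ast}--\eqref{eq:u-ast} the canonical generic $B$-stabiliser $\be_\star$ is determined by this span, hence $\be_\star(\co)=\be_\star(\co')$. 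Then $c(X)=\dim X-\dim\be+\dim\be_\star$ gives \eqref{eq:sovpad} \emph{directly as an equality}, with no need for a reverse bound. After that, Theorem~\ref{thm:main-dim} and your Step~2 finish the job.
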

\begin{proof} 
Since $\co$ is quasi-affine, a generic stabiliser $\be_\star=\be_\star(\co)$ for the action $(B:\co)$ is 
fully determined by the subspace in $\te^*$ generated by the weight monoid $\Gamma_\co$, 
see~\eqref{eq:t-ast} and \eqref{eq:u-ast}.

If $\co'\subset\bco$, then $\lg\Gamma_{\co'}\rg \subset \lg\Gamma_\co\rg$. Furthermore, if 
$r(\co')=r(\co)$, then
$\lg \Gamma_{\co'} \rg=\lg\Gamma_\co\rg$. This implies that $\be_\star(\co)=\be_\star(\co')$ and hence
Eq.~\eqref{eq:sovpad} holds. Using Theorem~\ref{thm:main-dim}, we conclude that 
$\dim\CS(\co)=\dim\CS(\co')$ and therefore these varieties coincide.
\end{proof}

Let $\mathsf T(\co)$ be the union of all embedded tangent  spaces to $\co$, i.e,
$\mathsf T(\co)=\bigcup_{x\in\co} \BT_{x}(\co)\subset\g$. Then 
$\mathsf T(\co)=G{\cdot}[\g,x]\subset\CS(\co)$ for some (any) $x\in\co$ and $\mathsf T(\co)\subset
\widehat{\Tan(\co)} \subset \CS(\co)$.

\begin{thm}    \label{thm:main-closure}
For any nilpotent orbit $\co$, there is a subalgebra $\te_\co\subset\te$ such that $\dim\te_\co=r(\co)$ and
\[
        \CS(\co)=\ov{\mathsf T(\co)}=\ov{G{\cdot}\te_\co} .
\]
\end{thm}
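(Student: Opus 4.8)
The plan is to identify the three sets in the claimed equality by a combination of dimension count and an explicit description of a dense orbit. First I would recall from Theorem~\ref{thm:main-dim} and the discussion preceding it that for a canonical generic stabiliser $S_\star$ for $(G:\co\times\co)$ one has a dominant $t\in\te$ with $(G^t,G^t)\subset S_\star\subset G^t$, hence $\el_\star=\g^t$ is a Levi subalgebra and, writing $\te_\co$ for the orthogonal complement of $\es_\star=\Lie S_\star$ inside $\el_\star$ (a subspace of $\te$, by $(\eus P_4)$), one has $\dim\te_\co=\rk\,G-\rk\,S_\star=r(\co)$. The alternative description $\te_\co=\lg\Gamma_\co\rg$ via the weight monoid, valid since $\co$ is quasiaffine, comes from~\eqref{eq:t-ast} and the translation into Lie algebra terms given at the end of Section~\ref{subs:twist}; I would quote whichever is convenient.

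The core of the argument is Terracini's Lemma together with the twisted diagonal. By Terracini, $\CS(\co)=\ov{\BT_x(\co)+\BT_y(\co)}$ for a generic $(x,y)\in\co\times\co$, and $\BT_x(\co)+\BT_y(\co)=[\g,x]+[\g,y]$. Choosing the generic point on the twisted diagonal, i.e. $(x,\bvth(x))$ with $S_\star=\g^x\cap\g^{\bvth(x)}$ the canonical stabiliser, I want to show that $[\g,x]+[\g,\bvth(x)]$ already contains (a translate of) $\te_\co$, or more precisely that $\ov{G{\cdot}\te_\co}$ and $\ov{\BT_x(\co)+\BT_y(\co)}$ coincide as $G$-varieties. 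One containment is clean: $\mathsf T(\co)=G{\cdot}[\g,x]\subset\CS(\co)$ and $\mathsf T(\co)\subset\widehat{\Tan(\co)}\subset\CS(\co)$ as noted just before the theorem, so $\ov{\mathsf T(\co)}\subset\CS(\co)$; and one checks $\te_\co\subset\CS(\co)$ by exhibiting $\te_\co$ inside a single tangent space $\BT_x(\co)+\BT_{y}(\co)$, which follows because the perp of that sum is $\g^{(x,y)}=\es_\star$ and $\te_\co\perp\es_\star$ inside $\el_\star$, while $\te_\co\subset\el_\star$; one must argue $\te_\co$ lies in $[\g,x]+[\g,y]$ rather than merely in its span, using $\el_\star=\es_\star\oplus\te_\co$ and $\el_\star\cap\es_\star^\perp=\te_\co$ plus the fact that $([\g,x]+[\g,y])^\perp=\es_\star$ forces $[\g,x]+[\g,y]\supset\es_\star^\perp\cap$ anything; actually since $[\g,x]+[\g,y]=\es_\star^\perp$ (perp of perp, as $\g$ is reductive with a nondegenerate form and these are subspaces), we get $\te_\co=\el_\star\cap\es_\star^\perp\subset[\g,x]+[\g,y]$ immediately. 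Hence $G{\cdot}\te_\co\subset\CS(\co)$, so $\ov{G{\cdot}\te_\co}\subset\CS(\co)$, and likewise $\ov{G{\cdot}\te_\co}\subset\ov{\mathsf T(\co)}$ since $\te_\co\subset\BT_x(\co)+\BT_y(\co)\subset\mathsf T(\co)$ once we observe (using the $G^t$-action) that a suitable tangent space equals the translate making this work — alternatively $G{\cdot}\te_\co\subset G{\cdot}([\g,x]+[\g,y])$ needs a small argument that a single tangent space suffices, which holds because $\BT_x(\co)$ for varying $x$ sweep out $\mathsf T(\co)$ and the generic sum is itself of the form $\BT_{x'}(\co)$ after translating by an element of $G$ carrying $x$ to a suitable point — I would instead simply note $\te_\co\subset \es_\star^\perp$ and $\es_\star^\perp$ is swept by tangent spaces, i.e. $\es_\star^\perp\subset\mathsf T(\co)$ is false in general, so the cleaner route is the dimension argument below.

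For the reverse inclusions I would run a dimension count. By Theorem~\ref{thm:main-dim}, $\dim\CS(\co)=\dim G-\dim S_\star$. On the other hand $\dim\ov{G{\cdot}\te_\co}=\dim G - \dim\g^{v}$ for generic $v\in\te_\co$, and the generic centraliser of an element of $\te_\co=\lg\Gamma_\co\rg$ is exactly $\es_\star\oplus\te_\co=\el_\star$ — wait, that would give $\dim G-\dim\el_\star$, not $\dim G-\dim S_\star$; the point is that $G{\cdot}\te_\co$ is not the orbit of a single generic $v$ but the union over $v\in\te_\co$, so $\dim\ov{G{\cdot}\te_\co}=\dim\te_\co+\dim G/\el_\star=r(\co)+(\dim G-\dim\el_\star)$. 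Since $\dim\el_\star=\dim\es_\star+r(\co)$ and $\dim S_\star=\dim\es_\star$, this is $\dim G-\dim\es_\star=\dim G-\dim S_\star=\dim\CS(\co)$. As $\CS(\co)$ is irreducible and contains the irreducible set $\ov{G{\cdot}\te_\co}$ of the same dimension, they are equal; and since $\ov{\mathsf T(\co)}$ is squeezed between them, $\CS(\co)=\ov{\mathsf T(\co)}=\ov{G{\cdot}\te_\co}$, which also yields the stated coincidence of tangential and secant varieties. The main obstacle I anticipate is the bookkeeping around $G{\cdot}\te_\co$ versus $\ov{\Ima\mu}$ and getting the dimension of $\ov{G{\cdot}\te_\co}$ exactly right — in particular verifying that $\te_\co$ meets the open set of elements with centraliser $\el_\star$ (equivalently $\te_\co$ is not contained in a proper intersection of root hyperplanes with a bigger Levi), which follows from $\lg\Gamma_\co\rg$ being precisely the span whose centraliser is $\el_\star$ as recorded at the end of Section~\ref{subs:twist} — and making sure the containment $\te_\co\subset\mathsf T(\co)$ (not just $\subset\CS(\co)$) is justified, for which the cleanest path is the perp computation $\es_\star^\perp=[\g,x]+[\g,y]=\BT_x(\co)+\BT_y(\co)$ combined with $\te_\co\subset\el_\star\cap\es_\star^\perp$, placing $\te_\co$ inside one tangent-space sum and hence inside $\mathsf T(\co)$ after the evident translation.
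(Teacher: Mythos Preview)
Your dimension count is fine, and your identification of $\te_\co$ (with $\dim\te_\co=r(\co)$ and generic centraliser $\el_\star$, so that $\dim\ov{G{\cdot}\te_\co}=\dim\te_\co+\dim G-\dim\el_\star=\dim G-\dim\es_\star=\dim\CS(\co)$) is correct. The gap is the containment $\ov{G{\cdot}\te_\co}\subset\CS(\co)$ (or, more sharply, $\subset\ov{\mathsf T(\co)}$), which you assert but never prove. Your perp computation gives $\te_\co\subset\es_\star^\perp=[\g,x]+[\g,\bvth(x)]$, but $[\g,x]+[\g,\bvth(x)]$ is the \emph{tangent space to} $\CS(\co)$ at a generic smooth point, not a subset of $\CS(\co)$. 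A cone is not contained in its tangent spaces. Likewise, $[\g,x]+[\g,y]\not\subset\mathsf T(\co)$, as you yourself note; $\mathsf T(\co)$ is a union of single tangent spaces, not closed under sums. So the line ``$\CS(\co)$ is irreducible and contains the irreducible set $\ov{G{\cdot}\te_\co}$ of the same dimension'' is precisely the missing step, and the squeeze fails without it.

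The paper closes this gap by a different mechanism: rather than putting $\te_\co$ directly into anything, it finds an $r(\co)$-dimensional subspace $\eus H\subset[\g,x]\cap\p$ (a \emph{single} tangent space) that projects isomorphically onto $\te_\co$ along $\n$, so $\eus H=\{t+\gamma(t)\mid t\in\te_\co\}$ for a linear $\gamma:\te_\co\to\n$. Since regular $t\in\z(\el_\star)$ satisfy $U{\cdot}t=t+\n$, each such $t$ is $G$-conjugate to $t+\gamma(t)\in\eus H$, whence $\ov{G{\cdot}\te_\co}=\ov{G{\cdot}\eus H}\subset\ov{G{\cdot}[\g,x]}=\ov{\mathsf T(\co)}\subset\CS(\co)$. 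The existence of $\eus H$ uses the equalities $\dim([\g,x]\cap(\es_\star\oplus\n))=c(\co)$ and $\dim([\g,x]\cap\p)=c(\co)+r(\co)$, which come from the $B$-genericity of $x$ and the decomposition $\g^x\oplus\be_\co=\g^x+\p$. This unipotent-translation trick is the idea you are missing; your perp argument does not substitute for it.
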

\begin{proof}
Our plan is as follows. First, we construct a certain $\te_\co$ and prove that 
\[
  \dim G{\cdot}\te_\co=2\dim\co-2c(\co)-r(\co). 
\] 
Then we show that  $\CS(\co)\supset\ov{\mathsf T(\co)}\supset\ov{G{\cdot}\te_\co}$,
which is sufficient in view of Theorem~\ref{thm:main-dim}.

1) Let $z:=(x,\bvth(x))\in \co\times\co$ be a generic point for the diagonal action of $G$ on
$\co\times\co$ that satisfy properties $(\eus P_1)$--$(\eus P_3)$. Hence $S_\star=G^z=G^x\cap G^{\bvth(x)}$, and $B^x=G^x\cap B$, etc. In terms of Lie 
algebras, we have 
\begin{itemize}
\item \ $\ess=\g^x\cap \g^{\bvth(x)}$ is a $\bvth$-stable reductive subalgebra of $\g$
and $[\te,\ess]\subset\ess$;
\item \ $\ess\cap\be=\be^x=:\be_\star$ is a Borel subalgebra of $\ess$;
\item \ $\ess\cap\ut=\ut^x=:\ut_\star$ is the nilradical of $\be_\star$;
\item \ $\ess\cap\te=\te^x=:\te_\star$ is a Cartan subalgebra of $\ess$; 
\item \ $[\g^t,\g^t]\subset\ess\subset\g^t$ for some dominant $t\in\te$.
\end{itemize} 
Recall that $\be+\es_\star=\p$ is a parabolic subalgebra and $\g^x\cap\p=\es_\star$. Then $\el_\star:=\g^t$ is the standard 
Levi subalgebra of $\p$, and the nilradical of $\p$, denoted $\n$, has the property that
$\ut=\ut_\star\oplus \n$. Let $\te_\co$ be the orthogonal complement of $\te_\star$ in $\te$. Then 
$\be_\co=\te_\co\oplus \n$ is a solvable Lie algebra, $\el_\star=\es_\star\oplus\te_\co$, and 
$\be=\be_\star\oplus \be_\co$.
\begin{center}
\begin{tikzpicture}[scale= .45]
\draw (0,0)  rectangle (10,10);
\path[fill=gray!30] (0,9.5) -- (6,3.5) -- (6.5,4) -- (5,5.5) -- (5,10) -- (0,10)--cycle ;
\path[draw,line width=1pt] (0,5) -- (4.5,5) -- (6,3.5) -- (6.5,4) -- (5,5.5) -- (5,10) -- (0,10)--cycle ;
\draw[magenta]  (6,3.5) -- (9.5,0) ;
\draw[red]  (6.5,4) -- (10,0.5) ;
\draw[dashed,darkblue]  (0,9.5) -- (4.5,5) ;
\draw[dashed,darkblue]  (0.5,10) -- (5,5.5) ;
\draw (-2.4,6)  node {$\te_\star$} ;
\draw[->]   (-2.4, 5.4) .. controls (1,3) .. (4,6);
\draw (12.5,2)  node {$\te_{\co}$} ;
\draw[->]   (12.4, 2.4) .. controls (11,4.2) .. (8,2);
\draw (12.5,7)  node {$\n$} ;
\draw[->]   (12.4, 7.4) .. controls (11,9.2) .. (8,7);
\draw (2.8,8)  node {$\be_\star$} ;

\end{tikzpicture}
\end{center}
\vskip1.5ex
A schematic position of these subalgebras is depicted above for $\g=\slN$. Here $\be$ (resp. $\te$) is 
the set of upper-triangular (resp. diagonal) matrices. The area with the thick boundary represents 
$\es_\star$ and the shaded area is $\be_\star$. The sum $\be_\co=\te_\co\oplus\n$ is a Levi 
decomposition for $\be_\co$ and $\dim\te_\co=\dim\te-\rk\,\es_\star=r(\co)$. Since 
$[\el_\star,\el_\star]\subset\es_\star\subset\el_\star$, the toral subalgebra $\te_\co$ is contained in the 
centre of $\el_\star$, denoted $\z(\el_\star)$. An element of $\z(\el_\star)$ is said to be {\it regular}, if its 
centraliser in $\g$ equals $\el_\star$. In our case, the orbit $\co$ is quasi-affine. Then $\te_\co$ always contain regular elements of $\z(\el_\star)$, 
see \cite[Lemma\,2.2.2]{p99}. (This also follows from~\eqref{eq:u-ast}.)
Hence these regular elements are dense in $\te_\co$ and
\[
    \dim \ov{G{\cdot}\te_\co}=\dim\g-\dim\el_\star+\dim\te_\co=2\dim\n+r(\co) .
\]
Because $\dim\co=\dim\be_\co+c(\co)=\dim\n+r(\co)+c(\co)$, we obtain
$\dim \ov{G{\cdot}\te_\co}=2\dim\co+2c(\co)+r(\co)$.

2) Since $x\in\co$ is $B$-generic, one has $\codim_\g(\g^x+\be)=c(\co)$. On the other hand,
$\g^x+\be=\g^x+\p=\g^x\oplus\be_{\co}$. Hence $[\g,x]+\be_\co^\perp=\g$ and
$\dim([\g,x]\cap\be_\co^\perp)=c(\co)$. 
Note also that $\be_\co^\perp=\es_\star\oplus\n\subset \el_\star\oplus\n=\p$, i.e., 
$\p=\be_\co^\perp\oplus \te_\co$. Therefore, we obtain
\[    
      \dim\bigl( [\g,x]\cap (\es_\star\oplus\n)\bigr)=c(\co)  \ \text{ and } \
                                     \dim\bigl( [\g,x]\cap \p\bigr)=c(\co)+r(\co). 
\]
This means that there is a vector space $\eus H\subset\p$ such that $\dim\eus H=r(\co)$,
\[ [\g,x]\cap \p=\eus H\oplus \bigl( [\g,x]\cap (\es_\star\oplus\n)\bigr),
\]  
and $\eus H$ projects bijectively onto $\te_\co$. 
Since $\eus H\subset [\g,x]=(\g^x)^\perp$ and $\es_\star\subset\g^x$, we actually have
$\eus H\subset \es_\star^\perp\cap\p=\te_\co\oplus\n$. Therefore, there is a linear map $\gamma:\te_\co\to\n$
such that 
\[
     \eus H=\{t+\gamma(t)\mid t\in \te_\co\} .
\]
If $t\in\te_\co$ is a regular element of $\z(\el_\star)$, then
$U{\cdot}t=(\exp\n){\cdot}t=t+\n$. In particular, $t$ and $t+\gamma(t)$ belong to the same $G$-orbit.
Therefore $\ov{G{\cdot} \eus H}=\ov{G{\cdot}\te_\co}$. It follows that 
\[
   \ov{G{\cdot}\te_\co}\subset \ov{G{\cdot}([\g,x]\cap\p)}\subset \ov{\mathsf T(\co)}\subset  \CS(\co).
\]
Since $\CS(\co)$ is irreducible and $\dim \ov{G{\cdot}\te_\co}=\dim \CS(\co)$, we are done.
\end{proof}

By Theorem~\ref{thm:main-closure}, we have $\CS(\co)=\g$ if and only if $\te_\co=\te$, i.e., 
$\es_\star=\{0\}$. 

\begin{ex}    \label{ex:defekt-omin}
The minimal nilpotent orbit $\omin$ is spherical and $r(\omin)=1$. If $e\in\omin$ 
and $h$ is the dominant characteristic of $e$, then $\te_{\omin}=\lg h\rg$. Here we recover the known 
result that $\CS(\omin)=\ov{G{\cdot}\lg h\rg}$ and if $\rk\,\g>1$, then 
$\delta_{\ov{\BP\omin}}=2c(\omin)+r(\omin)=1$~\cite{koy99}.
\end{ex}

Note that 
$\es_\star$ is a generic isotropy subalgebra for the $G$-action on $\co\times\co$, but not on 
$\CS(\co)$. Actually, Theorem~\ref{thm:main-closure} implies the following.

\begin{cl}  \label{cor:gis-CS}
{\sf (1)} \ A generic isotropy subalgebra for $(G:\CS(\co))$ is equal to $\el_\star=\es_\star\oplus {\te_\co}$; 
\\ \indent {\sf (2)} \ $\dim\CS(\co)\md G=r(\co)$. 
\end{cl}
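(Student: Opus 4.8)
The plan is to deduce both parts of Corollary~\ref{cor:gis-CS} from Theorem~\ref{thm:main-closure}, which identifies $\CS(\co)$ with $\ov{G{\cdot}\te_\co}$, together with the structural facts about $\te_\co$, $\es_\star$, and $\el_\star$ established in its proof. The key observation is that $\te_\co$ is contained in the centre $\z(\el_\star)$ and contains regular elements of $\z(\el_\star)$ (i.e.\ elements whose $\g$-centraliser is exactly $\el_\star$), and these regular elements form a dense subset of $\te_\co$. So a generic point of $\te_\co$ — and hence, after translating by $G$, a generic point of $\ov{G{\cdot}\te_\co}=\CS(\co)$ — is such a regular element $t$, with $\g^t=\el_\star$.

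For part (1), let $t\in\te_\co$ be a regular element of $\z(\el_\star)$. Then $G{\cdot}t$ is open in $\CS(\co)$ since $\dim G{\cdot}t=\dim\g-\dim\el_\star=2\dim\n+r(\co)=\dim\CS(\co)$ by the dimension count in the proof of Theorem~\ref{thm:main-closure}; as these $t$ are dense in $\te_\co$ and $\te_\co$ is dense in a slice to the generic $G$-orbit in $\CS(\co)$, such $t$ is a generic point of $\CS(\co)$. Its isotropy subalgebra is $\g^t=\el_\star=\es_\star\oplus\te_\co$, which is the asserted generic isotropy subalgebra. One should check genericity properly: the set of $t\in\te_\co$ that are regular in $\z(\el_\star)$ is open dense, and for such $t$ the stabiliser $\g^t$ is constant equal to $\el_\star$; since $\ov{G{\cdot}\te_\co}=\CS(\co)$ and every $G$-orbit in $\CS(\co)$ meets a neighbourhood of a regular slice, the orbits through these $t$ are of maximal dimension, so $\el_\star$ is indeed a generic isotropy subalgebra for $(G:\CS(\co))$.

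For part (2), $\dim\CS(\co)\md G$ is the codimension of a generic $G$-orbit in $\CS(\co)$, which by part (1) equals $\dim\CS(\co)-(\dim\g-\dim\el_\star)=\dim\g-\dim\el_\star+\dim\te_\co-(\dim\g-\dim\el_\star)=\dim\te_\co=r(\co)$ by Theorem~\ref{thm:main-closure}. Alternatively, since $G{\cdot}t$ is dense in $\CS(\co)$ for $t$ ranging over the regular elements of $\z(\el_\star)$ inside $\te_\co$, and two such $t,t'$ lie in the same $G$-orbit iff they are $W(\el_\star,\g)$-conjugate (a finite group action on $\te_\co$), the quotient $\CS(\co)\md G$ is birational to $\te_\co/W(\el_\star,\g)$, hence has dimension $\dim\te_\co=r(\co)$.

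\textbf{Main obstacle.} The only delicate point is justifying that a regular element $t$ of $\z(\el_\star)$ lying in $\te_\co$ is actually $G$-generic in $\CS(\co)$ — that is, that its orbit has maximal dimension and nearby orbits are conjugate. This follows from the dimension match $\dim G{\cdot}t=\dim\CS(\co)$ combined with $\CS(\co)=\ov{G{\cdot}\te_\co}$, but one must argue that $\te_\co$ (or a dense open subset of it) is transverse to the generic orbit and meets every generic orbit, so that genericity in $\te_\co$ transfers to genericity in $\CS(\co)$; this is immediate from the surjectivity of $G\times\te_\co\to\CS(\co)$ and the orbit-dimension count, but deserves an explicit sentence rather than being left implicit.
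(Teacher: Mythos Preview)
Your approach is the same as the paper's, but there is a concrete computational error in part~{\sf (1)}. You write
\[
\dim G{\cdot}t=\dim\g-\dim\el_\star=2\dim\n+r(\co)=\dim\CS(\co),
\]
and conclude that $G{\cdot}t$ is open in $\CS(\co)$. This is false: since $\g=\n^-\oplus\el_\star\oplus\n$, one has $\dim\g-\dim\el_\star=2\dim\n$, not $2\dim\n+r(\co)$. So $G{\cdot}t$ has codimension $r(\co)$ in $\CS(\co)$, not codimension~$0$. (Indeed, this codimension is precisely what you compute in part~{\sf (2)}, where you correctly use $\dim\CS(\co)=(\dim\g-\dim\el_\star)+\dim\te_\co$.) Fortunately the error is inessential for part~{\sf (1)}: what you need is only that the regular $t\in\te_\co$ form a dense subset, so that $G{\cdot}\{\text{regular }t\}$ is dense in $\ov{G{\cdot}\te_\co}=\CS(\co)$, and every point of this dense set has isotropy subalgebra conjugate to $\el_\star$. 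That is exactly the paper's (one-line) argument; your subsequent sentences say this correctly, so just delete the false openness claim.

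For part~{\sf (2)} there is a small gap you flag yourself but do not close. The equality
\[
\dim\CS(\co)\md G=\dim\CS(\co)-\dim(\text{generic orbit})
\]
requires that generic fibres of the quotient map are single orbits, i.e.\ that generic $G$-orbits in $\CS(\co)$ are \emph{closed}. The paper states this explicitly: the generic orbit is $G{\cdot}t$ for a semisimple element $t$, and orbits of semisimple elements in $\g$ are closed. Your alternative argument via $\te_\co/W(\el_\star,\g)$ would also work, but the closedness remark is quicker and is what the paper does.
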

\begin{proof}
{\sf (1)} \ It is already proved that $\CS(\co)=\ov{G{\cdot}\te_\co}$ and $\g^x=\el_\star$ for almost all 
$x\in\te_\co$.

{\sf (2)} \ Generic $G$-orbits in $\CS(\co)$  are closed and of dimension $\dim\g-\dim\el_\star=2\dim\n$. 
Hence
$\dim\CS(\co)\md G=\dim\CS(\co)-2\dim\n=\dim \g-\dim\es_\star-2\dim\n=\dim\te_\co$.
\end{proof}

\begin{rmk}   \label{rem:Dixmier}
If $\es_\star$ is semisimple, then $\te_\co=\z(\el_\star)$ is the centre of $\el_\star$. In this case, 
$\CS(\co)$ is the closure of the {\it Dixmier sheet\/} $\gD(\el_\star)$ corresponding to $\el_\star$. We refer 
to~\cite[\S\,5]{bokr} and~\cite{schichten} for a thorough treatment of sheets in $\g$ (cf. also~\cite[\S\,6.10]{vp}). Then 
$\CS(\co)\cap\N$ is irreducible, and the 
dense $G$-orbit in this intersection is the nilpotent orbit of $\gD(\el_\star)$. Furthermore, if $\es_\star$ is 
not semisimple, then $\te_\co$ still contains regular elements of $\z(\el_\star)$. Therefore, 
$\CS(\co)\cap \gD(\el_\star)$ is dense in $\CS(\co)$ and hence $\CS(\co)\cap\N$ is still is the closure 
of the nilpotent orbit of $\gD(\el_\star)$. Thus, the following is true:
\end{rmk}
\begin{cl}  \label{cl:Dixmier}
For any $\co\in\N/G$, the dense nilpotent orbit in $\CS(\co)\cap\N$, say $\tilde\co$, is Richardson.
The closure of $\tilde\co$ is ${G{\cdot}\n}$ and
$\dim\tilde\co=\dim\CS(\co)-r(\co)=\dim\g-\dim\el_\star$.
\end{cl}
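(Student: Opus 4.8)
The plan is to deduce this corollary directly from Remark~\ref{rem:Dixmier} together with the structure results already assembled. The key point is that a Dixmier sheet $\gD(\el_\star)$ always contains a dense nilpotent orbit, namely the Richardson orbit $G{\cdot}\n$ attached to the parabolic $\p=\el_\star+\n$ (equivalently, to the Levi $\el_\star$); this is standard sheet theory, for which we cite \cite[\S\,5]{bokr} or \cite{schichten}. So once we know $\CS(\co)\cap\N$ is the closure of the nilpotent orbit of $\gD(\el_\star)$, the orbit $\tilde\co$ is identified and its Richardson property is immediate.

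First I would recall from the proof of Theorem~\ref{thm:main-closure} that $\p=\el_\star\oplus\n$ with $\el_\star=\es_\star\oplus\te_\co$ the standard Levi, and that $\CS(\co)=\ov{G{\cdot}\te_\co}$ where $\te_\co$ consists generically of regular elements of $\z(\el_\star)$. For a regular $t\in\z(\el_\star)$ one has $\g^t=\el_\star$, so the generic fibre of $\CS(\co)$ over its categorical quotient meets $\N$ exactly in the limit $t\to 0$, which lands in the Dixmier sheet's nilpotent orbit $G{\cdot}\n$. Concretely, $G{\cdot}(t+\n)$ is an irreducible family of orbits each of dimension $\dim\g-\dim\el_\star=2\dim\n$, whose nilpotent member is $G{\cdot}\n$; its closure is contained in $\CS(\co)$, has the same dimension as $\tilde\co$, and lies in $\N$. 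This gives $\ov{\tilde\co}=\ov{G{\cdot}\n}$ and hence $\dim\tilde\co=2\dim\n=\dim\g-\dim\el_\star$, which by Theorem~\ref{thm:main-dim} equals $\dim\CS(\co)-r(\co)$ since $\dim\CS(\co)=2\dim\n+r(\co)$ (as computed in part~1 of the proof of Theorem~\ref{thm:main-closure}, noting $\dim\te_\co=r(\co)$).

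The one point needing a little care — the main (mild) obstacle — is verifying that $\tilde\co$ is genuinely \emph{dense} in $\CS(\co)\cap\N$, rather than just one component of it. When $\es_\star$ is semisimple this is exactly the statement that $\gD(\el_\star)\cap\N$ is a single orbit closure, i.e. $\CS(\co)\cap\N=\ov{\gD(\el_\star)}\cap\N=\ov{G{\cdot}\n}$, which is the standard description of a Dixmier sheet. When $\es_\star$ is not semisimple, $\te_\co$ properly contains $\z(\el_\star)$, but $\te_\co$ still contains regular elements of $\z(\el_\star)$ by \cite[Lemma\,2.2.2]{p99} (or \eqref{eq:u-ast}); hence $G{\cdot}\z(\el_\star)^{\mathrm{reg}}\subset G{\cdot}\te_\co$ is dense, so $\ov{\gD(\el_\star)}=\CS(\co)$, and the description of the nilpotent part transfers verbatim. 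This is precisely the content of Remark~\ref{rem:Dixmier}, so the proof is a short assembly.

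\begin{proof}
By part~1 of the proof of Theorem~\ref{thm:main-closure}, $\p=\el_\star\oplus\n$ with $\el_\star$ the standard Levi subalgebra, and $\CS(\co)=\ov{G{\cdot}\te_\co}$ where a dense subset of $\te_\co$ consists of regular elements of $\z(\el_\star)$. As explained in Remark~\ref{rem:Dixmier}, it follows that $\ov{\gD(\el_\star)}=\CS(\co)$, whether or not $\es_\star$ is semisimple. By the standard theory of sheets \cite[\S\,5]{bokr}, \cite{schichten}, the Dixmier sheet $\gD(\el_\star)$ contains a unique nilpotent orbit, the Richardson orbit $G{\cdot}\n$ associated with $\p$, and $\ov{\gD(\el_\star)}\cap\N=\ov{G{\cdot}\n}$. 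Hence $\CS(\co)\cap\N=\ov{G{\cdot}\n}$, so $\tilde\co=G{\cdot}\n$ is Richardson and $\ov{\tilde\co}=G{\cdot}\n$. Finally, $\dim\tilde\co=\dim\g-\dim\g^e=\dim\g-\dim\el_\star=2\dim\n$ for $e\in G{\cdot}\n$, and by Theorem~\ref{thm:main-dim} together with the equality $\dim\CS(\co)=2\dim\n+r(\co)$ from the proof of Theorem~\ref{thm:main-closure}, this equals $\dim\CS(\co)-r(\co)$.
\end{proof}
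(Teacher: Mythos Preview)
Your approach matches the paper's: the corollary is stated immediately after Remark~\ref{rem:Dixmier} with the words ``Thus, the following is true'', so invoking that remark together with standard sheet theory is precisely the intended argument. However, your treatment of the non-semisimple case contains a genuine error.

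You write that when $\es_\star$ is not semisimple, ``$\te_\co$ properly contains $\z(\el_\star)$'' and conclude ``$\ov{\gD(\el_\star)}=\CS(\co)$, whether or not $\es_\star$ is semisimple''. Both statements are false; the containment goes the other way. Since $\el_\star=\es_\star\oplus\te_\co$ with $\te_\co$ central in $\el_\star$, one has $\z(\el_\star)=\z(\es_\star)\oplus\te_\co$, so $\te_\co\subset\z(\el_\star)$, with \emph{proper} containment exactly when $\es_\star$ has nontrivial centre. Consequently $\CS(\co)=\ov{G{\cdot}\te_\co}\subset\ov{G{\cdot}\z(\el_\star)}=\ov{\gD(\el_\star)}$, and this inclusion can be strict: Example~\ref{ex:ex} with $l=2$ exhibits $\CS(\co)$ of codimension~$1$ in $\ov{\gD(\el_\star)}$. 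The paper's Remark~\ref{rem:Dixmier} asserts only that $\CS(\co)\cap\gD(\el_\star)$ is dense in $\CS(\co)$, not equality of closures.

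The repair is short. From $\CS(\co)\subset\ov{\gD(\el_\star)}$ one gets $\CS(\co)\cap\N\subset\ov{\gD(\el_\star)}\cap\N=\ov{G{\cdot}\n}$. Conversely, since $\te_\co$ contains a regular element $t\in\z(\el_\star)$, the Richardson orbit lies in $\ov{G{\cdot}t}\subset\CS(\co)$, giving $\ov{G{\cdot}\n}\subset\CS(\co)\cap\N$. This yields the desired equality $\CS(\co)\cap\N=\ov{G{\cdot}\n}$, and the remainder of your dimension count is correct.
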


\noindent
Recall that $\BT^*(\co)$ is the cotangent bundle of $\co$. It is a symplectic $G$-variety equipped with
a Hamiltonian $G$-action.
\begin{thm}    \label{thm:secant-mu}
Let $\mu: \BT^*(\co)\to \g^*$ be the moment map. Then under the identification $\g^*\simeq \g$, we
have  $\CS(\co)=\ov{\Ima(\mu)}$.
\end{thm}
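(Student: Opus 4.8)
The plan is to identify the image of the moment map $\mu: \BT^*(\co)\to\g^*$ explicitly, using the standard description of the cotangent bundle of a homogeneous space, and then match it with the variety $\CS(\co)=\ov{\mathsf T(\co)}$ from Theorem~\ref{thm:main-closure}. Fix a base point $e\in\co$ and identify $\co\simeq G/G^e$. The cotangent space at the point $eG^e$ is $(\g/\g^e)^*\simeq (\g^e)^\perp\subset\g^*\simeq\g$, where the orthogonal complement is taken with respect to the Killing form. Under this identification the moment map sends the covector $\xi\in(\g^e)^\perp$ sitting over $eG^e$ simply to $\xi$ itself (the moment map for the $G$-action on $\BT^*(G/H)$ is the ``collapsing'' map $G\times_H \h^\perp\to\g^*$, $(g,\xi)\mapsto \Ad^*(g)\xi$). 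Therefore $\Ima(\mu)=G{\cdot}(\g^e)^\perp = G{\cdot}[\g,e]=\mathsf T(\co)$, because $(\g^e)^\perp=[\g,e]=\BT_e(\co)$ is exactly the embedded tangent space of $\co$ at $e$.

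The next step is purely a matter of invoking Theorem~\ref{thm:main-closure}: it was proved there that $\CS(\co)=\ov{\mathsf T(\co)}$, where $\mathsf T(\co)=G{\cdot}[\g,x]$ for any $x\in\co$. Combining this with the previous paragraph gives $\ov{\Ima(\mu)}=\ov{\mathsf T(\co)}=\CS(\co)$, which is the claim. One can alternatively phrase the dimension check independently: $\Ima(\mu)$ is irreducible (continuous image of the irreducible variety $\BT^*(\co)$), its closure is $G$-stable, and $\dim\ov{\Ima(\mu)}\le\dim\BT^*(\co)=2\dim\co$; moreover $\mathsf T(\co)\subset\CS(\co)$ and $\dim\CS(\co)=2\dim\co-2c(\co)-r(\co)$ by Theorem~\ref{thm:main-dim}, so the equality $\ov{\Ima(\mu)}=\CS(\co)$ already follows from the set-theoretic inclusion $\mathsf T(\co)\subset\ov{\Ima(\mu)}\subset\CS(\co)$ once we know $\mathsf T(\co)$ is dense in $\CS(\co)$, which is Theorem~\ref{thm:main-closure}.

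The only point that needs a little care — and the step I expect to be the mild obstacle — is being precise about the identification $\g^*\simeq\g$ used for the moment map and making sure it is the $G$-equivariant one coming from the Killing form, so that $(\g^e)^\perp$ really does correspond to $[\g,e]$ under the \emph{same} identification. Since $\g$ is semisimple the Killing form is nondegenerate and $\Ad$-invariant, giving a canonical $G$-module isomorphism $\g\simeq\g^*$; under it $\Ann(\g^e)=(\g^e)^\perp$, and a standard computation with the Killing form shows $(\g^e)^\perp=[\g,e]$ (this is the same orthogonality used in the proof of Theorem~\ref{thm:main-dim}, where $\BT_x(\co)^\perp=\g^x$). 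Once this identification is pinned down, the description $\Ima(\mu)=G{\cdot}\BT_e(\co)=\mathsf T(\co)$ is immediate, and the theorem follows by quoting Theorem~\ref{thm:main-closure}. No case analysis and no computation with $\tri$-triples is needed here.
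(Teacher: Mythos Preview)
Your proof is correct and follows essentially the same approach as the paper: identify the cotangent space at $e$ with $\Ann(\g^e)=(\g^e)^\perp=[\g,e]$ via the Killing form, conclude that $\Ima(\mu)=G{\cdot}[\g,e]=\mathsf T(\co)$, and then invoke Theorem~\ref{thm:main-closure}. The paper's argument is just a terser version of what you wrote, without the extra commentary on the collapsing-map description or the alternative dimension check.
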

\begin{proof}
The tangent space at $x\in\co$ is $[\g,x]\subset \g$. Here $[\g,x] \simeq \g/\g^x$ as $G^x$-module, and 
the cotangent space at $x$ is $\Ann(\g^x)=\{\xi\in\g^*\mid \xi(y)=0 \ \ \forall y\in\g^x\}$. Identifying $\g$ and
$\g^*$ via the Killing form on $\g$, we obtain $\Ann(\g^x)=(\g^x)^\perp=[\g,x]$. Upon these identifications,
we also have  $\ov{\Ima(\mu)}=\ov{G{\cdot}([\g,x])}=\ov{\mathsf T(\co)}$.
\end{proof}

\begin{rmk}
A Lie algebra $\q=\Lie Q$ is said to be {\it quadratic}, if it has a non-degenerate symmetric invariant 
bilinear form. The previous proof shows that, for a quadratic Lie algebra $\q$, the tangent and cotangent 
bundles of any $Q$-orbit in the $Q$-module $\q\simeq\q^*$ are isomorphic. Furthermore, if $\co$ is a 
conical $Q$-orbit in $\q^*$, then the closure of the image of the moment map $\mu: \BT^*(\co)\to \q^*$ is
the affine cone over the tangential variety $\Tan(\ov{\BP\co})\subset\BP\q$.
\end{rmk}

\section{The defective nilpotent orbits in the classical Lie algebras}
\label{sect:4}
\noindent
In this section, we describe results of Omoda~\cite{omoda} on $\CS(\co)$ for nilpotent orbits in the 
classical Lie algebras and provide some complements to those results if $\g=\soN$. We also point out 
the maximal defective nilpotent orbits. Recall that $\co$ is defective if and only if $r(\co)< \rk\,\g$ 
(Corollary~\ref{cor:pro-defekt}). This also means that $\es_\star\ne\{0\}$ and $\CS(\co)\ne \g$.  

Let $\g=\g(\eus V)$ be a classical Lie algebra, where $\eus V$ is the space of tautological representation 
of $\g$. For $A\in\g(\eus V)\subset\glv$, let $\rank(A)$ denote the usual rank of a matrix of order 
$\dim\eus V$ and $A^i$ the $i$-th matrix power of $A$.
If $\blb$ is an admissible partition of $\dim \eus V$, then
$\co_\blb$ stands for the associated nilpotent orbit in $\g(\eus V)$. If $\g=\sone$ and
$\blb$ is {\it very even}, i.e., all parts of $\blb$ are even, then there are two associated orbits, denoted  
$\co_{\blb,\mathsf{I}}$ and $\co_{\blb, \mathsf{II}}$.

\subsection{The case of $\g=\slN$}
Let $\blb=(\lb_1,\lb_2,\dots,\lb_p)$ be a partition of $N$, i.e., $\lb_1\ge\dots\ge\lb_p$ and
$\sum_i \lb_i=N$. It is assumed that $\lb_p\ge 1$, i.e., $p$ equals the number of non-trivial parts of 
$\blb$. If $A\in\co_\blb$, then $\rank(A)=N-p$ and $\lb_1$ is the minimal integer such that $A^{\lb_1}=0$.
For any $\co\in \N/SL_N$, the conical secant variety of $\co$ is described by Omoda 
in~\cite[Sect.\,4]{omoda}. His result can be summarised as follows: 

\begin{itemize}
\item[$(\lozenge_1)$] \ \emph{If $\lb_1\ge 3$ and $A\in\co_\blb$, then $\CS(\co_\blb)=\{M\in\slN\mid \rank(M)\le 2\rank(A)\}$};
\item[$(\lozenge_2)$] \ \emph{Let $\chi_M(t)=\det(tI-M)=t^N-\sigma_1(M)t^{N-1}+\dots +(-1)^N\sigma_N(M)$ be
the characteristic polynomial of $M\in\slN$. If $\blb=(2^i, 1^{N-2i})$, 
then \\ $\CS(\co_\blb)=\{M\in\slN \mid \rank(M)\le 2\rank(A)=2i \ \& \
\sigma_{2j-1}(M)=0 \text{ for }2\le j\le i\}$.} 
\end{itemize}

\noindent
Note that $\lb_1\ge 3$ if and only if $A^2\ne 0$. It is known that $\co_\blb$ is spherical if and only if 
$\lb_1=2$, and then $r(\co_\blb)=\rank(A)$~\cite[Sect.\,4]{p94}. 
Hence a peculiarity in $(\lozenge_2)$ is related exactly to the spherical nilpotent orbits of $SL_N$.
Using $(\lozenge_1)$ and $(\lozenge_2)$, one easily derives  the following assertion.

\begin{prop}   \label{prop:sln-max}
For $\g=\slN$, the maximal nilpotent orbits such that $\CS(\co)\ne \slN$ correspond to the partitions:
\begin{itemize}
\item $(2,2)$, if $N=4$;
\item $(2^n)$ and $(n,1^n)$, if $N=2n\ge 6$;
\item $(n+1, 1^n)$, if $N=2n+1\ge 5$.
\end{itemize}
\end{prop}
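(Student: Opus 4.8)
The plan is to read off the answer from the two descriptions $(\lozenge_1)$ and $(\lozenge_2)$, together with Corollary~\ref{cor:sec=g}, which says $\CS(\co_\blb)=\slN$ precisely when $r(\co_\blb)=\rk\,\slN=N-1$. So the task is twofold: first, identify for which partitions $\blb$ one has $\CS(\co_\blb)\ne\slN$; second, find the maximal such $\blb$ in the closure order (which, for $\slN$, is the dominance order on partitions, see~\cite[Ch.\,6.2]{CM}).

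First I would dispose of the non-spherical case. If $\lb_1\ge 3$, then by $(\lozenge_1)$, with $r:=\rank(A)=N-p$, we have $\CS(\co_\blb)=\{M\mid \rank(M)\le 2r\}$; this is all of $\slN$ exactly when $2r\ge N-1$, i.e. $2(N-p)\ge N-1$, i.e. $p\le (N+1)/2$. Hence $\CS(\co_\blb)\ne\slN$ forces $p> (N+1)/2$, i.e. $p\ge \lceil N/2\rceil +1$ — in particular there are at least that many parts. Combined with $\lb_1\ge 3$, this makes $\sum\lb_i$ too large unless $N$ is small: one checks $\lb_1\ge 3$ and $p\ge\lceil N/2\rceil+1$ gives $N=\sum\lb_i\ge 3+(p-1)\ge 3+\lceil N/2\rceil$, which fails for $N\ge 6$ and also essentially pins down the sporadic small cases. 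So for $N\ge 6$ the only partitions with $\CS(\co_\blb)\ne\slN$ have $\lb_1=2$. Among those, by $(\lozenge_2)$, $\CS(\co_{(2^i,1^{N-2i})})\ne\slN$ iff either the rank bound $2i\le N-1$ is strict as a constraint (i.e. $2i<N-1$... more precisely $2i\le N-1$ always unless $i=N/2$) or the equations $\sigma_{2j-1}(M)=0$ for $2\le j\le i$ are nonvacuous, i.e. $i\ge 2$. I would sort out exactly which $(2^i,1^{N-2i})$ are \emph{not} all of $\slN$: when $N=2n$, the partition $(2^n)$ has $2i=N>N-1$ but the spherical rank is $r=n<N-1=2n-1$ (for $n\ge 1$), so $\CS\ne\slN$; when $N=2n$ and $i<n$, the $\sigma$-equations for $2\le j\le i$ are nonvacuous once $i\ge 2$, and dominance-maximal among those (with $\lb_1=2$, $i<n$) is $(2^{n-1},1^2)$ — but I must check whether that is actually dominated by something else in the list. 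Here is the key structural point: $(2^{n-1},1^2)$ is dominated by $(n,1^n)$? No — dominance must be checked directly. Rather, $(n,1^n)$ (with $\lb_1=n\ge 3$ for $n\ge 3$) falls under $(\lozenge_1)$ with $p=n+1$, $r=N-p=2n-(n+1)=n-1$, and $2r=2n-2=N-2<N-1$, so indeed $\CS\ne\slN$; and one verifies $(2^{n-1},1^2)\curle (n,1^n)$ in dominance (partial sums: $2\le n$, $4\le n+1$, ..., yes for $n\ge 3$), so $(2^{n-1},1^2)$ is \emph{not} maximal. The genuinely maximal ones are then $(2^n)$ and $(n,1^n)$, which are incomparable (first partial sum $2$ vs $n$; last few partial sums $2n$ vs $2n-1<2n$ — wait, $(n,1^n)$ has total $2n$, partial sums $n,n+1,\dots,2n$, vs $(2^n)$: $2,4,\dots,2n$; neither dominates the other once $n\ge 3$).

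For the small cases I would just enumerate. For $N=4$: partitions are $(4),(3,1),(2,2),(2,1,1),(1^4)$; compute $r$ or apply $(\lozenge_1)/(\lozenge_2)$ — $(4),(3,1)$ have $2r\ge 3=N-1$ so $\CS=\slN$; $(2,1,1)$ has $i=1$, $2i=2<3$ but one checks $r=2$... actually $\rank(A)=N-p=4-3=1$, $2r=2<3$, so $\CS\ne\slN$ — hmm, then I must recheck: is $(2,1,1)$ really dominated by $(2,2)$? Partial sums $2,3$ vs $2,4$: yes, $(2,1,1)\curle(2,2)$, so only $(2,2)$ is maximal for $N=4$. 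For $N=5$: partitions with $\CS\ne\slN$; $(3,1,1)$ has $p=3$, $r=2$, $2r=4\ge 4=N-1$, so $\CS=\slN$; $(2,2,1)$ has $p=3$, $i=2$, $2i=4=N-1$ but $\sigma_3=0$ is a nonvacuous equation ($j=2$), so $\CS\ne\slN$; and $(n+1,1^n)$ for $n=2$ is $(3,1,1)$ — but we just said that's all of $\slN$! So I need to recheck: $(3,1,1)$, $N=5$, $p=3$, $r=N-p=2$, $2r=4$, and $\{\rank\le 4\}$ in $\sl_5$ is everything. That contradicts the proposition's claim that $(n+1,1^n)=(3,1,1)$ is maximal non-full for $N=5$.

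\emph{This is the main obstacle}, and resolving it is where the real work lies: I expect the resolution is that $(\lozenge_1)$ as I've paraphrased it must be read more carefully — presumably the rank-bound description only gives a proper subvariety, and when $2\rank(A)<N-1$ it is proper, but there is a subtlety when $2\rank(A)=N-1$ versus $=N$. For $N=2n+1$ odd, $2\rank(A)$ is always even, hence $\le N-1=2n$, so $\{\rank(M)\le 2\rank(A)\}=\slN$ iff $2\rank(A)=2n$ iff $\rank(A)=n$ iff $p=N-\rank(A)=n+1$. So $(\lozenge_1)$-orbits with $\CS\ne\slN$ need $p\ge n+2$, forcing (with $\lb_1\ge 3$) that $N=2n+1\ge 3+(p-1)\ge 3+(n+1)=n+4$, i.e. $n\le -3+\dots$ — fails for $n\ge 3$, i.e. $N\ge 7$; for $N=5$ ($n=2$) it would need $p\ge 4$ and $\lb_1\ge 3$, impossible. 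So for $N=2n+1\ge 5$, \emph{all} non-spherical orbits have $\CS=\slN$, and the non-full ones are among $(2^i,1^{N-2i})$ with $i\ge 2$ (the $\sigma$-equations non-vacuous) — wait but then where does $(n+1,1^n)$ come from? I realize I have mis-stated the even/odd split: for $N=2n+1$, the partition $(n+1,1^n)$ has $\lb_1=n+1\ge 3$ and $p=n+1$, giving $\rank(A)=N-p=n$, $2\rank(A)=2n=N-1$, so $\{\rank\le 2n\}=\sl_N$. Hmm. So I must conclude the proposition's intended reading of $(\lozenge_1)$ differs, OR there is an additional constraint in Omoda's description I haven't transcribed. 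The honest plan: go back to the precise statement of $(\lozenge_1)$ in~\cite{omoda}, which likely states $\CS(\co_\blb)=\{M\mid \rank(M)\le 2\rank(A)\}\cap\slN$ and that this is the \emph{full} $\slN$ iff $\rank(A)$ exceeds a threshold — and reconcile the parity so that $(n+1,1^n)$ for $N=2n+1$ genuinely has $\CS\ne\slN$, which must mean $2\rank(A)<N-1$ there, i.e. I have miscounted $p$: perhaps the convention counts parts equal to $1$ too, so $(n+1,1^n)$ has $p=n+1$ \emph{nontrivial} parts only if the $1$'s don't count, giving $\rank(A)=N-(\text{number of all parts})=N-(n+1)=n$ again. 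So in fact $(n+1,1^n)$ should have $2\rank(A)=2n=N-1$ and $\CS=\sl_N$, contradicting the proposition.

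Therefore the correct plan is: \textbf{(i)} first nail down the precise matrix-rank formula $\rank(A)=N-(\text{total number of parts of }\blb)$ and recompute — for $(n+1,1^n)$ the number of parts is $1+n=n+1$, so $\rank(A)=n$; \textbf{(ii)} re-examine $(\lozenge_1)$: the variety $\{M\in\sl_N:\rank(M)\le 2r\}$ equals $\sl_N$ iff $2r\ge N-1$; with $r=n$, $2r=2n=N-1$, so it \emph{is} all of $\sl_N$ — hence for $(n+1,1^n)$ to be non-full, Omoda's description must carry an extra constraint beyond the rank bound (indeed, $\{\rank(M)\le N-1\}=\{\det M=0\}$, and inside $\sl_N$ with $N=2n+1$ this is the whole space only if $\sl_N\subset\{\det=0\}$, which is \emph{false}); so the correct statement is $\CS(\co_{(n+1,1^n)})=\{M\in\sl_{2n+1}:\det(M)=0\}\subsetneq \sl_{2n+1}$, a hypersurface, explaining why it is non-full and maximal; \textbf{(iii)} with this corrected reading, carry out the dominance-order bookkeeping: for $N=2n+1\ge5$ the maximal non-full partition is $(n+1,1^n)$ (one checks every partition strictly dominating it has $2\rank(A)=N$, i.e. $\le n$ parts... i.e. the partition $(n+2,1^{n-1})$ and above have $\rank(A)\le n-1$? no, \emph{more} parts means smaller rank — so partitions dominating $(n+1,1^n)$ have \emph{fewer} parts, larger rank, hence are full), and for $N=2n$ the maximal non-full ones are $(2^n)$ (rank $n$, and the $\sigma$-equations are vacuous but the rank-$\le n$ condition... no: $(2^n)$ is spherical, handled by $(\lozenge_2)$ with $i=n$: $2i=2n=N$, rank condition vacuous, but $\sigma_{2j-1}=0$ for $2\le j\le n$ is non-vacuous, so non-full) and $(n,1^n)$ (rank $n-1$, $2\rank=2n-2<N-1=2n-1$, non-full by $(\lozenge_1)$), these two being incomparable in dominance; for $N=4$ only $(2,2)$ survives. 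The genuine hard part is this reconciliation of the rank-bound description with the determinant/characteristic-polynomial equations so that the parities work out and the "maximal" claim is correct — once the correct form of $(\lozenge_1)$ and $(\lozenge_2)$ is in hand, the rest is a short finite check in the dominance order on partitions of $N$.
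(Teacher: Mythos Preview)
Your approach---deducing the answer directly from $(\lozenge_1)$ and $(\lozenge_2)$ and then doing a dominance-order check---is exactly what the paper intends; the paper gives no further details beyond ``one easily derives''. So the strategy is correct.

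The genuine gap is the repeated error in the threshold. You assert that $\{M\in\slN:\rank(M)\le 2r\}=\slN$ exactly when $2r\ge N-1$. This is false: a generic element of $\slN$ has rank $N$, not $N-1$ (e.g.\ any diagonal matrix with distinct entries summing to zero). The correct condition is $2r\ge N$, i.e.\ $p\le N/2$. Once you use the right threshold, the apparent ``contradiction'' for $N=2n+1$ evaporates immediately: $(n+1,1^n)$ has $\rank(A)=n$, hence $2r=2n<N$, so $\CS(\co)=\{\rank\le 2n\}=\{\det=0\}\subsetneq\sl_{2n+1}$ straight from $(\lozenge_1)$---there is no extra constraint in Omoda's description to hunt for. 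Your own closing paragraph reaches this, but only after a long detour caused by the wrong threshold.

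The same error is responsible for the false intermediate claim that ``for $N\ge 6$ the only partitions with $\CS(\co_\blb)\ne\slN$ have $\lb_1=2$''. This directly contradicts the proposition you are proving, since $(n,1^n)$ for $N=2n\ge 6$ has $\lb_1=n\ge 3$. With the correct threshold $p>N/2$, the inequality you wrote becomes $N\ge 3+\lfloor N/2\rfloor$, which is \emph{satisfied} for all $N\ge 6$ rather than failing, so non-spherical non-full orbits do exist in abundance.

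With the threshold fixed at $2r\ge N$, the clean statement is: $\CS(\co_\blb)=\slN$ if and only if $\lb_1\ge 3$ and $p\le N/2$ (this is precisely the poset $\ups_{\sf max}$ of Section~\ref{subs:sl}). The maximal orbits \emph{outside} this set are then found by a short dominance check: among partitions with $\lb_1\ge 3$ and $p\ge\lfloor N/2\rfloor+1$ the dominance-maximal is $(N-\lfloor N/2\rfloor,1^{\lfloor N/2\rfloor})$, and among partitions with $\lb_1=2$ it is $(2^{\lfloor N/2\rfloor},1^{N-2\lfloor N/2\rfloor})$. For $N=2n+1$ the latter is dominated by the former, giving the single orbit $(n+1,1^n)$; for $N=2n\ge 6$ the two, namely $(n,1^n)$ and $(2^n)$, are incomparable; for $N=4$ the first candidate degenerates to $(2,1^2)\curle(2^2)$.
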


\subsection{The case of $\g=\spn$}
The nilpotent $Sp_{2n}$-orbits are parameterised by the partitions of $2n$ such that each odd part 
occurs an even number of times~\cite[Ch.\,5.1]{CM}. 
For any $\co\in \N/Sp_{2n}$, the conical secant variety of $\co$ is described in~\cite[Sect.\,6]{omoda}. 
Omoda's result can be stated as follows:
\\[.6ex]
\hbox to \textwidth{  $(\lozenge_3)$ \hfil
\emph{if $A\in\co_\blb\subset \N(\spn)$, then $\CS(\co_\blb)=\{M\in\spn\mid \rank(M)\le 2\rank(A)\}$.}
\hfil}
\vskip.6ex

\noindent
If $\blb=(\lb_1,\dots,\lb_p)$ is a ``symplectic'' partition and $A\in\co_\blb$, then $\rank(A)=2n-p$. 
Using this fact and $(\lozenge_3)$, one readily obtains the following assertion.

\begin{prop}   \label{prop:spn-max}
For $\g=\spn$, the maximal nilpotent orbits such that $\CS(\co)\ne \spn$ correspond to the partitions:
\begin{itemize}
\item $(2m, 1^{2m})$, if $n=2m$;
\item $(2m, 2, 1^{2m})$, if $n=2m+1$.
\end{itemize}
\end{prop}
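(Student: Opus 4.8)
The plan is to reduce the statement to a question about the dominance order on symplectic partitions of $2n$, using Omoda's description $(\lozenge_3)$ together with the rank formula recalled just above. First I would record the dictionary: for a symplectic partition $\blb=(\lb_1,\dots,\lb_p)$ of $2n$ and $A\in\co_\blb$ one has $\rank(A)=2n-p$, so $(\lozenge_3)$ gives $\CS(\co_\blb)=\{M\in\spn\mid\rank(M)\le 2(2n-p)\}$. Since $\rank(M)\le 2n$ for every $M\in\spn$, this set is all of $\spn$ exactly when $2(2n-p)\ge 2n$, i.e.\ when $p\le n$, and it is a proper closed subvariety when $p\ge n+1$. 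Hence $\CS(\co_\blb)\ne\spn$ if and only if $\blb$ has at least $n+1$ nonzero parts, and the proposition amounts to determining the maximal elements, in the closure order, of the family of such orbits.

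Next I would invoke that the closure order on $\N/Sp_{2n}$ is the restriction to symplectic partitions of the dominance order~\cite[Ch.\,6.2]{CM}, and note that passing to a dominated partition only (weakly) increases the number of parts --- transposition reverses dominance and the number of parts of $\blb$ equals the largest part of its conjugate. So $\mathcal I:=\{\blb\mid \blb\text{ a symplectic partition of }2n,\ p(\blb)\ge n+1\}$ is an order ideal, and it suffices to exhibit one partition in $\mathcal I$ that dominates every other element of $\mathcal I$. The key elementary input here is that among all partitions of $N$ with at least $k$ parts, the dominance-largest is $(N-k+1,1^{k-1})$ (a one-line partial-sum count); for $N=2n$ and $k=n+1$ this is $(n,1^n)$.

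The argument then splits into the two parities. If $n=2m$, the partition $(n,1^n)=(2m,1^{2m})$ is already symplectic --- its only odd part, $1$, occurs $2m$ times --- and lies in $\mathcal I$, hence is its unique maximal element and the proof is complete. If $n=2m+1$, then $(n,1^n)=(2m+1,1^{2m+1})$ is \emph{not} symplectic, since the odd part $2m+1$ occurs an odd number of times; so I would instead show that $\blb^{\max}:=(2m,2,1^{2m})$ --- which is symplectic and has $n+1$ parts, hence lies in $\mathcal I$ --- dominates every symplectic partition $\blb$ of $2n=4m+2$ with at least $2m+2$ parts. Each partial-sum inequality $\lb_1+\dots+\lb_j\le(\blb^{\max})_1+\dots+(\blb^{\max})_j$ with $j\ge 2$ is forced simply by counting the remaining $\ge 2m+2-j$ parts; the single delicate case is $j=1$, where naive counting yields only $\lb_1\le 2m+1$ and equality $\lb_1=2m+1$ forces $\blb=(2m+1,1^{2m+1})$, which is excluded precisely by the symplectic parity condition. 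Hence $\blb^{\max}$ is again the unique maximal element of $\mathcal I$.

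I expect this last point to be the only genuine content of the proof: the symplectic constraint is exactly what removes the naive top partition when $n$ is odd and pushes the maximal defective orbit down to $(2m,2,1^{2m})$. The rest --- the reduction via $(\lozenge_3)$, the passage to the dominance order, and the partial-sum bookkeeping --- is routine.
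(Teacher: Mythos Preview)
Your proposal is correct and follows exactly the route the paper intends: the paper's own ``proof'' is simply the sentence ``Using this fact and $(\lozenge_3)$, one readily obtains the following assertion,'' so you have supplied precisely the dominance-order combinatorics on symplectic partitions that the paper leaves to the reader. Your handling of the odd-$n$ case --- ruling out $(2m{+}1,1^{2m+1})$ by the parity constraint and then checking the partial sums for $(2m,2,1^{2m})$ --- is the only nontrivial step, and it is done correctly.
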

A nilpotent $Sp_{2n}$-orbit $\co_\blb$ is spherical if and only if $\lb_1=2$, i.e., $A^2=0$~\cite[Sect.\,4]{p94}.

\subsection{The case of $\g=\soN$}
The nilpotent $O_N$-orbits are parameterised by the partitions of $N$ such that each even part occurs 
an even number of times~\cite[Ch.\,5.1]{CM}. The $O_N$-orbits coincide with $SO_N$-orbits unless the partition $\blb$ is {\it very even}. If $\blb$ is very even,
then the $O_N$-orbit splits into two associated $SO_N$-orbits, denoted $\co_{\blb,{\sf I}}$ and $\co_{\blb, {\sf II}}$.

For $\soN$, results of Omoda are incomplete. By~\cite[Thm.\,5.4]{omoda}, one has 
\\[.6ex]
\hbox to \textwidth{  $(\lozenge_4)$ \hfil
\emph{If $A\in\co_\blb\subset\N(\soN)$ and $\lb_1\ge 3$, then $\CS(\co_\blb)=\{M\in\soN\mid \rank(M)\le 2\rank(A)\}$}.
\hfil}
\vskip.6ex

\noindent
However, if $\lb_1=2$, then no description of $\CS(\co_{\blb})$ is given therein. In particular, even
$\dim\CS(\co_\blb)$ is not determined. We say more about this case in Example~\ref{ex:ex}.

The nilpotent $SO_N$-orbits with  $\lb_1=2$ are spherical, but unlike the case of $SL_N$ or $Sp_{2n}$,
these are not all spherical orbits. (The orbit $\co_\blb\in \N/SO_N$ is spherical if and only if 
$\lb_1+\lb_2\le 5$.) If $\blb=(2^{2m},1^l)$, then $r(\co_{\blb})=m$, see~\cite[(4.4)]{p94}. Here $N=4m+l$ 
and $\rk\,\g=2m+[l/2]$. (Note that there are two such orbits, if $l=0$.) By Corollary~\ref{cor:sec=g}, this 
means that all such $SO_N$-orbits are defective. 

If $\blb=(\lb_1,\dots,\lb_p)$ is an ``orthogonal'' partition and $A\in\co_\blb$, then $\rank(A)=N-p$ is 
always even. Combining this with $(\lozenge_4)$ yields the following assertion

\begin{prop}   \label{prop:son-max}
For $\g=\soN$, the maximal nilpotent orbits such that $\CS(\co)\ne \soN$ correspond to the partitions:
\begin{itemize}
\item $(2m-1, 1^{2m+1})$ and $(2^{2m})$, \ if\/ $\g=\mathfrak{so}_{4m}$;
\item $(2m-1, 1^{2m+2})$ and $(2^{2m},1)$, \ if\/ $\g=\mathfrak{so}_{4m+1}$;
\item $(2m+1, 1^{2m+1})$, \quad if\/ $\g=\mathfrak{so}_{4m+2}$;
\item $(2m+1, 1^{2m+2})$, \quad if\/ $\g=\mathfrak{so}_{4m+3}$.
\end{itemize}
\end{prop}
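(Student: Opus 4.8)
The plan is to combine Omoda's description $(\lozenge_4)$ of $\CS(\co_\blb)$ for orbits with $\lb_1\ge 3$ with the standard combinatorics of orthogonal partitions. By $(\lozenge_4)$, for such an orbit with $A\in\co_\blb$, we have $\CS(\co_\blb)=\{M\in\soN\mid \rank(M)\le 2\rank(A)\}$, which is all of $\soN$ precisely when $2\rank(A)\ge N-1$ (recall $\rank(M)$ for $M\in\soN$ is at most $N$, but automatically constrained: a skew form forces the image to have even rank if $N$ is even, etc.). So $\CS(\co_\blb)\ne\soN$ iff $2\rank(A)=2(N-p)$ is strictly less than the maximal attainable rank inside $\soN$. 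First I would make precise what "maximal attainable rank" is: for $\soN$ it equals $N$ if $N$ is even and $N-1$ if $N$ is odd, but since $\rank(A)=N-p$ is always even, the relevant threshold is $2(N-p)\le N-2$ when $N$ is even (as $2(N-p)$ is even and $<N$ means $\le N-2$) and $2(N-p)\le N-3$ when $N$ is odd. Equivalently, $\CS(\co_\blb)\ne\soN$ iff $p>N/2$ (with a small adjustment in the odd case), i.e. iff the partition has "many" parts.

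Next I would identify, among orbits with $\lb_1\ge 3$, the maximal ones (in the closure order, equivalently the dominance order on partitions) subject to $\CS\ne\soN$, i.e. $p$ above the threshold. Increasing an orbit in the dominance order by an elementary move typically decreases the number of parts $p$, so I want to push $\lb_1$ up while keeping $p$ as large as the constraint allows; combined with $\lb_1\ge 3$ this forces exactly one large part and the rest equal to $1$ (to maximize $p$), giving partitions of the shape $(a,1^{N-a})$. Imposing that this be an admissible orthogonal partition (each even part with even multiplicity), that $\lb_1=a\ge 3$, and the rank threshold $2(N-a)$ below the maximal rank in $\soN$, I would solve for $a$ in each residue class $N=4m,4m+1,4m+2,4m+3$; this yields the candidates $(2m-1,1^{2m+1})$, $(2m-1,1^{2m+2})$, $(2m+1,1^{2m+1})$, $(2m+1,1^{2m+2})$ respectively — one checks the parity of $a$ is forced (odd) so that $(a,1^{N-a})$ is orthogonal, and that $a$ is as small as possible with $a\ge 3$ while keeping $N-a$ large enough to violate the threshold, i.e. $a$ is the largest part one can afford. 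Then I would verify these are indeed maximal: any orbit strictly above one of them has $2\rank(A)$ reaching the maximal rank, hence $\CS=\soN$; this is a short dominance-order check at each candidate's covers.

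Finally, because $(\lozenge_4)$ only covers $\lb_1\ge 3$, I must account for orbits with $\lb_1=2$, which are not dominated by $\lb_1\ge 3$ orbits and could a priori also be maximal defective orbits. For these, $\co_\blb=\co_{(2^{2k},1^l)}$ with $r(\co_\blb)=k<\rk\,\g=2k+[l/2]+(\text{parity adjustment})$, so by Corollary~\ref{cor:sec=g} these are always defective ($\CS\ne\soN$). I would then check whether any such orbit is maximal among all orbits with $\CS\ne\soN$: the largest $\lb_1=2$ orbit in dominance order with as many $2$'s as the orthogonality constraint allows is $(2^{2m})$ for $N=4m$ or $(2^{2m},1)$ for $N=4m+1$, and one sees it is \emph{not} dominated by the relevant $(2m-1,1^{2m+1})$-type orbit (nor vice versa, since dominance compares partial sums: $(2^{2m})$ has first partial sum $2$ while $(2m-1,1^{2m+1})$ has first partial sum $2m-1>2$, yet at the end the $(2^{2m})$ partition has more parts). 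Hence for $N=4m$ and $N=4m+1$ there are two incomparable maximal defective orbits, matching the proposition's statement; for $N\equiv 2,3\pmod 4$, I would check the $\lb_1=2$ orbit is strictly dominated by the listed odd-$\lb_1$ orbit and so does not appear. The main obstacle I anticipate is exactly this last bookkeeping — correctly determining, for each residue of $N$ mod $4$, whether the extremal $\lb_1=2$ orbit is dominated by the extremal $\lb_1\ge 3$ orbit or is a genuinely separate maximal element — since it requires carefully comparing the two families of partitions in the dominance order and using the precise rank/complexity data from \cite{p94} for the $\lb_1=2$ case, rather than Omoda's $(\lozenge_4)$.
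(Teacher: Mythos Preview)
Your plan is correct and follows exactly the paper's (very terse) argument: the paper merely records that $\rank(A)=N-p$ is always even and says the proposition follows from $(\lozenge_4)$, together with the preceding remark that every $\lb_1=2$ orbit is defective by Corollary~\ref{cor:sec=g}.

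A few places in your write-up are muddled and should be straightened out before you execute the plan. For the hook $(a,1^{N-a})$ one has $p=N-a+1$ and $\rank(A)=a-1$, so the threshold condition reads $2(a-1)<2[N/2]$, not ``$2(N-a)$ below the maximal rank''. Consequently, to locate the \emph{maximal} defective hook you want $a$ as \emph{large} as possible (odd, with $a-1<[N/2]$), not ``as small as possible''; your parenthetical ``$a$ is the largest part one can afford'' is the correct direction, and the earlier sentence about ``keeping $p$ as large as the constraint allows'' points the wrong way. The clean version of the $\lb_1\ge 3$ step is: every orthogonal partition of rank $r=N-p$ satisfies $\sum_{i\le k}\lb_i\le N-(p-k)=r+k$, hence is dominated by the hook $(r+1,1^{N-r-1})$; since hooks form a chain, the unique maximal defective $\lb_1\ge 3$ orbit is the hook with the largest even $r<[N/2]$. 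Your dominance comparisons for the $\lb_1=2$ family are correct as stated.
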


\noindent
Since the partition $(2^{2m})$ is very even, one has three maximal defective orbits in $\eus D_{2m}$.

\subsection{Complements}   \label{subs:ex}
Results of~\cite{omoda} exposed in $(\lozenge_1)$-$(\lozenge_4)$ above show that, for most nilpotent 
orbits $\co\subset \g(\eus V)$, the cone $\CS(\co)$ is a determinantal variety. These varieties are 
explicitly used in Section~\ref{sect:comput-c-r}, and
some standard results on them are gathered in Appendix~\ref{appendix}.

\begin{ex}    \label{ex:ex}
Using our methods, we provide more information on $\CS(\co_\blb)$ for the case left open 
in~\cite{omoda}, i.e., $\g=\mathfrak{so}_{4m+l}$ and $\blb=(2^{2m},1^l)$.
Here $\dim\co_\blb=4m^2+2ml-2m$ and $r(\co_\blb)=m$. By Theorem~\ref{thm:main-dim},
\[ 
    \dim\CS(\co_\blb)=2\dim\co_\blb-m=8m^2+4ml-5m , 
\] 
hence $\codim_{\soN}\CS(\co_\blb)=3m+\genfrac{(}{)}{0pt}{}{l}{2}$. Since $\rank(A)=2m$ for 
$A\in\co_\blb$ and $\soN$ can be regarded as the space of skew-symmetric matrices of order $N$, $\CS(\co_\blb)$ is 
contained in the variety of skew-symmetric matrices $M$ such that $\rank(M)\le 4m$.

For the skew-symmetric determinantal variety $\sk_{4m}(N)$, we have
\[
    \dim \sk_{4m}(N)=2m(2N-4m-1), 
\]
see Appendix~\ref{appendix}.
Hence $\codim_{\soN} \sk_{4m}(N)=\genfrac{(}{)}{0pt}{}{N-4m}{2}$. This shows that the 
codimension of $\CS(\co_\blb)$ in $\sk_{4m}(N)$ equals $3m$. It is not clear, though, what are the 
equations of $\CS(\co_\blb)$ inside $\sk_{4m}(N)$. Nevertheless, one can explicitly describe 
the subalgebra $\te_\co\subset\te$.

The weight monoid  $\Gamma_{\co_\blb}$ is freely generated by the dominant weights 
$\mu_1,\dots,\mu_m$, where $\mu_i=\varpi_{2i}$ \ if $i<m$ and
$\mu_m=\begin{cases} 2\varpi_{2m}, &\text{ if } \ l\le 1,\\
\varpi_{2m}+\varpi_{2m+1}, &\text{ if } \ l=2 ,\\
\varpi_{2m}, &\text{ if } \ l\ge 3 .   \end{cases}$

\noindent
(See \cite[Sect.\,6]{p03}.) More accurately, if $l=0$, i.e., $\g=\eus D_{2m}$, then there are two associated  
``very even'' orbits. For $\co_{(2^{2m}), {\sf I}}$, one has $\mu_m=2\varpi_{2m}$
(as indicated above), while $\mu_m=2\varpi_{2m-1}$ for $\co_{(2^{2m}), {\sf II}}$. 
For the model of $\soN$ as the set of skew-symmetric matrices with respect to the
anti-diagonal, the Cartan subalgebra $\te$ consists of the diagonal matrices in $\soN$. That is, 
\[
  \te=\{\mathsf{diag}(\esi_1,\dots,\esi_n, (0), -\esi_n,\dots,-\esi_1)\},
\]
where $n=[N/2]$ and $0$ occurs only if $N$ is odd, i.e., $\g$ is of type $\eus B_n$. Using this model and 
the standard formulae for $\{\varpi_j\}$ via $\{\esi_i\}$ in the orthogonal case (\cite[Table\,1]{t41}), one readily 
obtains $\te_{\co_\blb}$ as the linear span of $\mu_1,\dots,\mu_m$.  Here
\[
  \te_{\co_\blb}=\{\mathsf{diag}(\nu_1,\nu_1,\dots,\nu_m,\nu_m,\underbrace{0,\dots,0}_{l}, -\nu_m,-\nu_m,\dots,-\nu_1,-\nu_1)\}.
\]
If $l=0$, then this formula is valid for $\co_{(2^{2m}), {\sf I}}$, while for $\co_{(2^{2m}), {\sf II}}$ the 
central part in the formula should be $\dots,\nu_m,-\nu_m, \nu_m,-\nu_m,\dots$. For $\co_\blb$, the 
generic isotropy subalgebra $\es_\star$ for the diagonal action $(G:\co_\blb\times\co_\blb)$ is equal to 
$(\tri)^m\oplus \mathfrak{so}_l$, see proof of Theorem~\ref{thm:so}{\sf (iii)}. Hence $\es_\star$ is 
semisimple unless $l=2$. Therefore, if $l\ne 2$, then $\CS(\co_\blb)$ is the closure of a Dixmier sheet,  
see Remark~\ref{rem:Dixmier}. For $l=2$, the centre of $\es_\star$ is 1-dimensional. Hence 
$\CS(\co_\blb)$ is of codimension~1 in the Dixmier sheet corresponding to the Levi subalgebra 
$\el_\star=\es_\star\oplus\te_\co$ whose semisimple part is $(\tri)^m$.

By $(\eus P_2)$-$(\eus P_3)$ in Section~\ref{subs:twist}, for the canonical embedding 
$\ess\hookrightarrow\g$, the set of simple roots of $\ess$ is a subset of $\Pi$. Moreover, by
Eq.~\eqref{eq:t-ast} and \eqref{eq:u-ast}, the knowledge of $\Gamma_{\co_\blb}$ yields 
the canonical  embedding of $\es_\star$. For $\ess= (\tri)^m\oplus \mathfrak{so}_l$, we get the following. 
If $l\ge 1$ and $l\ne 2$, then the simple roots of $\ess$ are 
\beq   \label{eq:roots}
\underbrace{\ap_1,\ap_3,\dots,\ap_{2m-1}}_{(\tri)^m},
\underbrace{\ap_{2m+1},\dots,\ap_n}_{\mathfrak{so}_l},
\eeq
where $n=2m+[l/2]$ and the natural numbering of simple roots is used, cf.~\cite[Table\,1]{t41}. If $l=0$, then there are two very even orbits and the canonical embedding of
$(\tri)^m$ is given by the roots 
$\begin{cases}  \ap_1,\ap_3,\dots, \ap_{2m-3},\ap_{2m-1} & \text{ for } \co_{(2^{2m}), {\sf II}}, \\
 \ap_1,\ap_3,\dots,\ap_{2m-3},\ap_{2m} & \text{ for } \co_{(2^{2m}), {\sf I}} \ .
\end{cases}$ \\
Finally, if $l=2$, i.e., $\g=\mathfrak{so}_{4m+2}$, then then the roots of $(\tri)^m$ are as in
\eqref{eq:roots}
and the toral algebra $\mathfrak{so}_2$ is
$\{\mathsf{diag}(0,\dots,0,\esi_{2m+1},-\esi_{2m+1},0,\dots,0)\}$. 
\end{ex}

\section{Computing the complexity and rank}
\label{sect:comput-c-r}

\noindent
In this section, we determine the generic isotropy subalgebra $\es_\star$, and thereby the rank and 
complexity for {\bf all} nilpotent orbits $\co$ in the {\bf classical} Lie algebras. This allows us to describe
$\CS(\co)$ independently of~\cite{omoda}.

Set $\N_o=\N\setminus\{0\}$. Our main output is that the poset ${\N}_o/G$ can be split into a 
disjoint union of subposets $\{\ups_\xi\}_{\xi\in\Xi}$ such that each $\ups_\xi$ has a unique maximal element
and the mapping $\co\mapsto r(\co)$ is constant on $\ups_\xi$.
More precisely, we show that the generic isotropy subalgebra $\es_\star$ for the action
$(G:\co\times\co)$ is the same for all $\co\in\ups_\xi$. This implies that the map $\co\mapsto r(\co)$ is 
constant on $\ups_\xi$. Then Proposition~\ref{prop:r=r'} shows that all $\co\in\ups_\xi$ have the 
same variety $\CS(\co)$. The information on $\es_\star$ and its embedding into $\g$ allows us to 
determine, in principle, the subalgebra $\te_\co$. 

The definition of the posets $\{\ups_\xi\}$ is inspired by results of~\cite{omoda}, which show that, for 
most of nilpotent orbits $\co$ in the classical algebras, the variety $\CS(\co)$ depends only 
on $\rank(A)$ for $A\in\co$ (see exposition in Section~\ref{sect:4}). This provides a clue for a splitting of 
$\N_o/G$ into the posets $\{\ups_\xi\}_{\xi\in\Xi}$. Once the splitting is described, the subsequent proofs 
are completely independent of \cite{omoda}. They exploit the $(\BZ,h)$-grading of $\g$ associated 
with $\co$ and Theorem~\ref{thm:p94}. Therefore, our results provide an independent confirmation and 
strengthening of results of Omoda. The explicit description of posets $\{\ups_\xi\}_{\xi\in\Xi}$ is given via 
partitions.
\\ \indent
We prove below three theorems for the series of classical Lie algebras. The scheme and ideas of proof 
are the same for all of them. Therefore, the maximal amount of details is given in the first proof (for 
Theorem~\ref{thm:sl} with $\g=\slN$).

\subsection{More notation} To discuss generic isotropy subalgebras for representations related to 
nilpotent orbits and their $(\BZ, h)$-gradings, an additional notation is required. Recall that 
$\{\varpi_i\}$ are the fundamental weights of $\g$.

\textbullet \ \ The fundamental weights of the simple factors of $\el=\g(0)$ are denoted by $\{\vp_i\}$
for the first factor, $\{\vp'_j\}$ for the second factor, etc. Write $\te_m$ for an $m$-dimenonal 
diagonalisable subalgebra (e.g. the centre of $\el$). Then $\{\esi_1,\dots,\esi_m\}$ is  a suitable basis for $\te^*_m$.

\textbullet \ \ The simple $\el$-modules are identified with their highest weights, via the multiplicative 
notation. For instance, any monomial in $\{\esi_i\}$ represents a 1-dimensional $\te_m$-module, and
the simple $\el$-module with highest weight $2\vp_i+\vp'_j+\esi_k+\esi_l$ is denoted by 
$\vp_i^2\vp'_j\esi_k\esi_l$. Finally, $\odin$ stands for the trivial 1-dimensional representation.

\textbullet \ \ To describe some varieties $\CS(\co)$, we need the determinantal varieties
$\sD_r(N)$, $\sD_r^o(N)$, $\sk_{2m}(N)$, and $\sym_m(N)$, see Appendix~\ref{appendix}.

\subsection{$\g=\slN$}   \label{subs:sl}
Consider the following subposets of $\N/SL_N$.
\begin{enumerate}
\item $\ups_{\sf max}=\{\co\mid \rank(e)\ge N/2 \ \ \& \ \text{ $e^2\ne 0$ for $e\in\co$}\}$;
\item $\ups_j=\{\co\mid \rank(e)=j < N/2 \ \ \& \ \text{ $e^2\ne 0$ for $e\in\co$}\}$; 
\item $\ups_{(1,r)}=\{\co_{(2^r, 1^{N-2r})}\}$ for $r=1,2,\dots, [N/2]$.
\end{enumerate}

\noindent
Hence $2\le j\le [\frac{N-1}{2}]$ in (2), \ each $\ups_{(1,r)}$ consists of a single spherical $SL_N$-orbit, and 
\[
    {\N}_o/SL_N=\ups_{\sf max}\sqcup (\bigsqcup_{j=2}^{[(N{-}1)/2]}\ups_j)\sqcup (\bigsqcup_{r=1}^{[N/2]}\ups_{(1,r)}) .
\]
Let $\blb=(\lb_1,\dots,\lb_p)$ be a partition of $N$. Then
\begin{itemize}
\item \ $\co_\blb\in\ups_{\sf max}$ if and only if $N-p\ge N/2$ and $\lb_1\ge 3$. Hence $N\ge 3$ and the 
minimal element of $\ups_{\sf max}$ corresponds to the partitions $(3, 2^{n})$, if $N=2n+3$\ \& \ $(3, 2^{n},1)$, if $N=2n+4$. Clearly, $\co_{\sf reg}$ is the only maximal element of $\ups_{\sf max}$.
\item \ $\co_\blb\in\ups_{j}$ if and only if $N-p=j<N/2$ and $\lb_1\ge 3$. Note that subsets $\ups_j$ occur 
only for $N\ge 5$. Each $\ups_j$ has a unique minimal and unique maximal element, their 
partitions being $\blb_{\sf min}(j)=(3, 2^{j-2}, 1^{N+1-2j})$ and  $\blb_{\sf max}(j)=(j+1, 1^{N-j-1})$. Write
$\co_{\sf min}(j)$ and $\co_{\sf max}(j)$ for the corresponding orbits.
\end{itemize} 

\begin{thm}  \label{thm:sl}
Let $e\in\N(\slN)$ and $\co=SL_N{\cdot}e$.
\begin{itemize}
\item[\sf (i)] \ If\/ $\co\in\ups_{\sf max}$, then $\es_\star=\{0\}$, $r(\co)=\rk\,\slN=N-1$, and 
$\CS(\co)=\slN$;
\item[\sf (ii)] \ If\/ $\co\in\ups_j$ $(2\le j\le [\frac{N-1}{2}])$, then $\es_\star=\mathfrak{gl}_{N-2j}$, $r(\co)=2j-1$, and 
$\CS(\co)=\sD_{2j}^o(N)$;
\item[\sf (iii)] \ If\/ $\co=\co_{(2^r, 1^{N-2r})}$, then $\co$ is spherical and\/ $1\le r\le N/2$. Here
$\es_\star=\mathfrak{gl}_{N-2r}\oplus\te_{r-1}$, \\ 
$r(\co)=\rank(e)=r$, and \\ 
$\CS(\co)=\sD_{2r}^o(N)\cap \{\sigma_3=\dots =\sigma_{2r-1}=0\}=\sD_{2r}(N)\cap \{\sigma_1=\sigma_3=\dots =\sigma_{2r-1}=0\}$.
\end{itemize}
\end{thm}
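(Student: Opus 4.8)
The strategy is to use Theorem~\ref{thm:p94} together with its complements to reduce the computation of $\es_\star$ to a generic-stabiliser problem for a representation attached to the $(\BZ,h)$-grading, and then to identify $\CS(\co)$ by combining Theorem~\ref{thm:main-dim}, Theorem~\ref{thm:main-closure}, and Proposition~\ref{prop:r=r'}. Since $\es_\star$ and hence $r(\co)$ are shown to be constant on each poset $\ups_\xi$, it suffices to do the computation at the minimal element of $\ups_\xi$: once $r(\co)$ is known there, Proposition~\ref{prop:r=r'} propagates the value of $r(\co)$ and the variety $\CS(\co)$ up the whole chain $\ups_\xi$, because $\lg\Gamma_{\co'}\rg\subset\lg\Gamma_\co\rg$ forces equality as soon as the ranks agree. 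So the real work is: (a) for a well-chosen representative of $\co_{\sf min}(j)$ (the partition $(3,2^{j-2},1^{N+1-2j})$) and of $\co_{(2^r,1^{N-2r})}$, write down the $h$-grading, the Levi $\el=\g(0)$, the reductive part $K=L^e$ of $G^e$, and the $K$-module $\g(2)$; (b) compute a generic stabiliser for the doubled action attached to this data, identifying it as $\gl_{N-2j}$ (resp. $\gl_{N-2r}\oplus\te_{r-1}$); (c) read off $r(\co)=\rk\,\g-\rk\,\es_\star$; (d) identify $\CS(\co)$ as the claimed determinantal variety.

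For part~(i), $\co\in\ups_{\sf max}$ means $\rank(e)\ge N/2$ and $e^2\ne 0$, with minimal element $(3,2^{\bullet})$ or $(3,2^{\bullet},1)$. Here the point is that $r(\co)$ is already full: one shows $\es_\star=\{0\}$ directly at the minimal element, either by the $\tri$-triple/grading computation of Theorem~\ref{thm:p94} or by exhibiting two generic elements $x,y\in\co$ with $\g^x\cap\g^y=0$ (equivalently $[\g,x]+[\g,y]=\slN$, i.e. $\rank$ can reach $N-1$ on sums of two conjugates once $\rank(e)\ge N/2$). Then Corollary~\ref{cor:sec=g} gives $\CS(\co)=\slN$, and Proposition~\ref{prop:r=r'} extends this to all of $\ups_{\sf max}$, with $\co_{\sf reg}$ at the top; alternatively this is just $(\lozenge_1)$ with $2\rank(A)\ge N\ge \rank$ of a regular matrix.

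For parts~(ii) and~(iii), which are the substantive cases, I would proceed as follows. Containment $\CS(\co)\subset\sD^o_{2j}(N)$ (resp.\ $\subset\sD^o_{2r}(N)$) is immediate from Terracini: a generic point of $\CS(\co)$ is $x+y$ with $x,y\in\co$ of rank $j$ (resp.\ $r$), so $\rank(x+y)\le 2j$, and being in $\slN$ it lies in $\sD^o_{2j}(N)=\sD_{2j}(N)\cap\slN$. For the spherical case one gets the extra vanishing $\sigma_1=\sigma_3=\dots=\sigma_{2r-1}=0$: since $e^2=0$, a generic conjugate sum $x+y$ with $x^2=y^2=0$ has the property that $(x+y)^2$ has rank $\le r$ coming from $xy+yx$, which forces the odd elementary symmetric functions through degree $2r-1$ to vanish (one can see this by choosing nilpotent $x,y$ with $x^2=y^2=0$ in block form and computing the characteristic polynomial, or by noting that $x+y$ is conjugate into $\es_\star\oplus\te_\co=\el_\star$ and the odd invariants vanish identically on $\te_\co$ by the symmetry $\nu_i\leftrightarrow-\nu_i$ visible from $\Gamma_\co$ being generated by the even fundamental weights $\varpi_{2i}$). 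The reverse inclusion, i.e.\ that $\CS(\co)$ is \emph{all} of the stated variety, is handled by a dimension count: compute $\dim\CS(\co)=2\dim\co-2c(\co)-r(\co)$ from Theorem~\ref{thm:main-dim} once $\es_\star$ is pinned down, compare with $\dim\sD^o_{2j}(N)$ (resp.\ the dimension of the variety cut out by the extra $\sigma$'s, which is $\dim\sD^o_{2r}(N)-(r-1)$ since generic elements of $\sD_{2r}(N)$ impose $r-1$ independent odd-invariant conditions), and invoke irreducibility of $\CS(\co)$ plus the Appendix facts on determinantal varieties to conclude equality. Finally, to get the explicit $\es_\star=\gl_{N-2j}$ (resp.\ $\gl_{N-2r}\oplus\te_{r-1}$) I would compute the generic stabiliser of the doubled action directly on the grading data via Theorem~\ref{thm:p94} and its complement~(1): the $\gl_{N-2j}$ factor is the centraliser of the ``used up'' $2j\times 2j$ block, the $\te_{r-1}$ in the spherical case being exactly $\lg\Gamma_\co\rg^\perp\cap(\text{centre of }\el_\star)$, consistent with $r(\co)=\rk\,\g-\rk\,\es_\star=(N-1)-((N-2j-1)+0)=2j-1$ and $(N-1)-((N-2r-1)+(r-1))=r$.

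\textbf{Main obstacle.} The delicate point is the spherical case~(iii): proving that $\CS(\co_{(2^r,1^{N-2r})})$ is cut out inside $\sD^o_{2r}(N)$ \emph{exactly} by $\sigma_1=\sigma_3=\dots=\sigma_{2r-1}=0$ and by nothing more — i.e.\ that these $r-1$ (together with $\sigma_1=0$ from $\slN$) vanishing conditions are independent on $\sD_{2r}(N)$ and account for the full codimension drop. One inclusion (vanishing of the odd invariants on $\CS(\co)$) is the symmetry argument above; the hard direction is the dimension/irreducibility match, which requires the determinantal-variety input from the Appendix (the dimension of $\sD_{2r}(N)$ and the fact that the odd elementary symmetric functions restrict to an independent system there) and the precise value of $\es_\star$. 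Getting $\es_\star$ right — in particular the $\te_{r-1}$ summand, whose presence is what makes $\co$ fall short of $r(\co)=\rk\,\g$ — is where the $\tri$-triple bookkeeping of Theorem~\ref{thm:p94} and the canonical-embedding formulae~\eqref{eq:t-ast}--\eqref{eq:u-ast} must be applied carefully.
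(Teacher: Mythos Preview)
Your overall architecture matches the paper's: reduce to computing $\es_\star$ via the $(\BZ,h)$-grading and Theorem~\ref{thm:p94}, then read off $r(\co)$ and identify $\CS(\co)$ by containment in a determinantal variety plus a dimension count. Part~(iii) is essentially as in the paper (there one also uses the explicit weight monoid $\Gamma_{\co_\blb}=\langle\varpi_1{+}\varpi_{N-1},\dots,\varpi_r{+}\varpi_{N-r}\rangle$ to write down $\te_\co$ and see directly that the odd $\sigma_i$ vanish on it), and for part~(i) your reduction to the minimal element is fine because $r(\co_{\sf min})=\rk\,\g$ is already the maximum, so monotonicity of rank under closure forces $r(\co)=\rk\,\g$ throughout $\ups_{\sf max}$.

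The genuine gap is in part~(ii). You propose to compute $\es_\star$ only at the minimal element $\co_{\sf min}(j)$ and then ``propagate'' via Proposition~\ref{prop:r=r'}. But that proposition has $r(\co')=r(\co)$ as a \emph{hypothesis}, not a conclusion; knowing $r(\co_{\sf min}(j))=2j-1$ gives only the lower bound $r(\co)\ge 2j-1$ for $\co\in\ups_j$ (from $\lg\Gamma_{\co_{\sf min}}\rg\subset\lg\Gamma_\co\rg$), and nothing prevents $r(\co)$ from being larger a priori. The paper closes this gap by computing $\es_\star$ at \emph{both} endpoints $\co_{\sf min}(j)$ and $\co_{\sf max}(j)$; once $r(\co_{\sf min}(j))=r(\co_{\sf max}(j))=2j-1$, the sandwich $r(\co_{\sf min}(j))\le r(\co)\le r(\co_{\sf max}(j))$ forces constancy on all of $\ups_j$, and only then does Proposition~\ref{prop:r=r'} apply. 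There is an alternative repair implicit in your own ingredients: once you know $\CS(\co_{\sf min}(j))=\sD_{2j}^o(N)$ (by the dimension count at the minimal element) and $\CS(\co)\subset\sD_{2j}^o(N)$ for every $\co\in\ups_j$ (your Terracini/rank argument), the chain $\sD_{2j}^o(N)=\CS(\co_{\sf min}(j))\subset\CS(\co)\subset\sD_{2j}^o(N)$ forces $\CS(\co)=\sD_{2j}^o(N)$ directly; then Corollary~\ref{cor:gis-CS} identifies $\el_\star$ as the generic isotropy of $\sD_{2j}^o(N)$ (the centraliser of a regular semisimple rank-$2j$ matrix), which is independent of $\co$, and hence so are $r(\co)$ and $\es_\star$. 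Either way, you cannot get by with a computation at the minimal element alone together with Proposition~\ref{prop:r=r'} as stated.
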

\begin{proof}
{\sf (i)} It suffices to prove that $\es_\star=\{0\}$ for the minimal element of $\ups_{\sf max}$.

$(\blacklozenge_1)$ For $N=2n+3$, we have $\blb=(3,2^n)$. If $n=0$, then $\co_\blb=\co_{\sf reg}$ 
is distinguished. Hence one may assume that $n\ge 1$. The dominant characteristic of $e\in\co_\blb$ 
is the diagonal matrix
\[
   h=\mathsf{diag}(2,\underbrace{1,\dots,1}_n,0,\underbrace{-1,\dots,-1}_n,-2)
\]
and hence the {\it weighted Dynkin diagrams} (={\it\bfseries wDd}) of $e$ is
$\eus D(\co_\blb)=(1\,\underbrace{0 \dots 0}_{n-1}\,1\,1\,\underbrace{0 \dots 0}_{n-1}\,1)$. The 
associated $(\BZ, h)$-grading is readily determined via $h$ or $\eus D(\co_\blb)$, see 
Figure~\ref{fig:Z-grad}, where the nonzero entries of $h$ are placed on the diagonal.
\begin{center}
\begin{figure}[ht]   
\begin{tikzpicture}[scale= .45]
\draw (0,0)  rectangle (11,11);
\path[draw,dashed]  (0,1) -- (11,1); 
\path[draw,dashed]  (0,5) -- (11,5); 
\path[draw,dashed]  (0,6) -- (11,6); 
\path[draw,dashed]  (0,10) -- (11,10); 
\path[draw,dashed]  (1,0) -- (1,11); 
\path[draw,dashed]  (5,0) -- (5,11); 
\path[draw,dashed]  (6,0) -- (6,11); 
\path[draw,dashed]  (10,0) -- (10,11); 
\path[fill=gray!30] (0,11) -- (0,10) -- (1,10) -- (1,11)--cycle ;
\path[fill=gray!30] (1,10) -- (1,6) -- (5,6) -- (5,10)--cycle ;
\path[fill=gray!30] (5,6) -- (5,5) -- (6,5) -- (6,6)--cycle ;
\path[fill=gray!30] (6,5) -- (6,1) -- (10,1) -- (10,5)--cycle ;
\path[fill=gray!30] (10,1) -- (10,0) -- (11,0) -- (11,1)--cycle ;
\draw (3.5,8.5)  node {$\sln$} ;  \draw (8.6,3.3)  node {$\sln$} ;
\draw (3.2,10.5)  node {{\color{red}$1$}} ; \draw (8.2,5.5)  node {{\color{red}$1$}} ; 
\draw (5.45,8)  node {{\color{red}$1$}} ; \draw (10.45,3)  node {{\color{red}$1$}} ; 
\draw (5.45,10.5)  node {{\color{darkblue}$2$}} ; \draw (10.45,5.5)  node {{\color{darkblue}$2$}} ; 
\draw (8.2,8)  node {{\color{darkblue}$2$}} ; \draw (10.45,10.45)  node {$4$} ; 
\draw (8.2,10.45)  node {{\color{forest}$3$}} ; \draw (10.45,8)  node {{\color{forest}$3$}} ;
\draw (11.6,.5)  node {\footnotesize {\color{darkblue}$1$}} ;
\draw (11.6,3)  node {\footnotesize {\color{darkblue}$n$}} ;
\draw (11.6,5.5)  node {\footnotesize {\color{darkblue}$1$}} ;
\draw (11.6,8)  node {\footnotesize {\color{darkblue}$n$}} ;
\draw (11.6,10.5)  node {\footnotesize {\color{darkblue}$1$}} ;
\draw (.5,10.5)  node {{\tiny $2$}} ; 
\draw (1.5,9.5)  node {{\tiny $1$}} ; 
\draw (4.6,6.5)  node {{\tiny $1$}} ;
\draw (5.5,5.5)  node {{\tiny $0$}} ;
\draw (6.5,4.5)  node {{\tiny -$1$}} ;
\draw (9.6,1.4)  node {{\tiny -$1$}} ;
\draw (10.5, .5)  node {{\tiny -$2$}} ;
\path[draw,dashed]  (1.7,9.3) -- (4.3,6.7); 
\path[draw,dashed]  (6.8,4.2) -- (9.3,1.7); 
\end{tikzpicture}
\caption{}
\label{fig:Z-grad}
\end{figure}
\end{center}
\noindent 
Here $\el=\g(0)\simeq\sln\oplus\sln\oplus\te_4$ is represented by the shaded area, and the centre of $\g(0)$ is
\[
    \te_4= \{\mathsf{diag}(\mu_1,\underbrace{\mu_2,\dots,\mu_2}_n,\mu_3,\underbrace{\mu_4,\dots,\mu_4}_n,\mu_5)\}\cap\slN .
\]
The areas marked with `$i$' represent the subspace $\g(i)$. Set $\esi_i=\mu_i-\mu_{i+1}$ for $1\le i\le 4$. Then $\te_4^\ast=\lg \esi_1,\dots,\esi_4\rg$ and the structure of the $\g(0)$-modules $\g(i)$
is easily determined from Figure~\ref{fig:Z-grad} or $\eus D(\co_\blb)$. Namely, 
\begin{itemize}
\item \quad $\g(1)=\esi_1\vp_1+\esi_2\vp_{n-1}+\esi_3\vp'_1+\esi_4\vp'_{n-1}$;
\item \quad $\g(2)=\esi_1\esi_2+\esi_2\esi_3\vp_{n-1}\vp'_1+\esi_3\esi_4$;
\item \quad $\g(3)=\esi_1\esi_2\esi_3\vp'_1+\esi_2\esi_3\esi_4\vp_{n-1}$;
\item \quad $\g(4)=\esi_1\esi_2\esi_3\esi_4$.
\end{itemize}
The structure of $\g(1)$ is not required for subsequent computations, cf.~Theorem~\ref{thm:p94}. It is 
given here for the sake of completeness. Recall that $\ka$ is a generic isotropy subalgebra for the 
$\el$-module $\g(2)$ and $\ka$ is determined by $\blb$. Here $\ka\simeq \sln\oplus\te_1$, where 
$\sln=\Delta(\sln)$ is diagonally embedded in $\sln\oplus\sln$ and 
$\te_1=\{\esi_1=-\esi_2=\esi_3=-\esi_4\}\subset\te_4$. Therefore, $\g(2)\vert_\ka=\ad(\sln)+3\odin$, and
a generic isotropy subalgebra for $(\ka:\g(2))$ is $\es\simeq \te(\sln)\oplus \te_1$. Finally, $\es_\star$ is
a {\sf g.i.s.} for the $\es$-module $\g({\ge}3)\oplus \g({\ge} 3)^*$ and the formulae for $\g(3)$ and $\g(4)$
show that $\es_\star=\{0\}$.

$(\blacklozenge_2)$ For $N=2n+4$, we have $\blb=(3,2^n,1)$. This case can be included in the family 
of partitions considered in part {\sf (ii)} below.

{\sf (ii)} It suffices to check that $\es_\star$ is equal to $\mathfrak{gl}_{N-2j}$ for both
$\co_{\sf min}(j)$ and $\co_{\sf max}(j)$. 

$(\blacklozenge_3)$  For $\co_{\sf min}(j)$, we set $n=j-2$ and work with 
$\blb=\blb_{\sf min}(n+2)=(3,2^n,1^m)$, where $n\ge 0$, $m\ge 2$, and $N=2n+m+3$. Then 
$N-2j=m-1$. The case $n=0$ has already been considered in~\cite[Thm.\,4.5.19]{p99}. If $n\ge 1$, then
\begin{gather*}
h=\mathsf{diag}(2,\underbrace{1,\dots,1}_n,\underbrace{0,\dots,0}_{m+1},\underbrace{-1,\dots,-1}_n,-2)  
\\   \text{and} \ \ \eus D(\co_\blb)=(1\,\underbrace{0 \cdots 0}_{n-1}\,1\,\underbrace{0\cdots 0}_m\,1\,\underbrace{0 \cdots 0}_{n-1}\,1) .
\end{gather*}
The block structure of $\slN$ is similar to that in Figure~\ref{fig:Z-grad}. The only difference is that
now the central diagonal block is of size $m+1$ (instead of $1$). 
Then $\g(0)=\sln\oplus\mathfrak{sl}_{m+1}\oplus\sln\oplus\te_4$ and the $\g(0)$-modules $\g(i)$ for
$i=2,3,4$ are
\begin{itemize}
\item \quad $\g(2)=\esi_1\esi_2\vp'_1+\esi_2\esi_3\vp_{n-1}\vp''_1+\esi_3\esi_4\vp'_m$.
\item \quad $\g(3)=\esi_1\esi_2\esi_3\vp''_1+\esi_2\esi_3\esi_4\vp_{n-1}$;
\item \quad $\g(4)=\esi_1\esi_2\esi_3\esi_4$.
\end{itemize}
As always, $\g(1)$ is not needed for our purposes, and we omit it here. Note that the second simple factor of $\g(0)$,
$\mathfrak{sl}_{m+1}$,  acts trivially on $\g(3)\oplus\g(4)$. Here 
$\ka=\Delta(\sln)\oplus \te_1\oplus\mathfrak{gl}_{m}$. Then
$\es=\te(\sln)\oplus\te_1\oplus \mathfrak{gl}_{m-1}$, where the last summand lies in 
$\mathfrak{sl}_{m+1}$. Finally, we obtain $\es_\star=\mathfrak{gl}_{m-1}$, which includes the case 
with $m=1$ required in $(\blacklozenge_2)$ above.

$(\blacklozenge_4)$  For $\co_{\sf max}(j)$, the structure of the $(\BZ,h)$-grading depends on the parity 
of $j$. We only consider the case of even $j$. Write $\blb=\blb_{\sf max}(j)=(2m+1, 1^r)$, where $j=2m$, 
$N=2m+1+r$, $N-2j=r+1-2m$. Then the conditions on $m$ and $r$ are $1\le m\le [r/2]$. In this case,
\begin{gather*}
h=\mathsf{diag}(2m,\dots,4,2,\underbrace{0,\dots,0}_{r+1},-2,-4,\dots,-2m),  
\\
\eus D(\co_\blb)=(\underbrace{2 \cdots 2}_{m}\,\underbrace{0\cdots 0}_r\,\underbrace{2 \cdots 2}_{m}) ,
\end{gather*}
and the orbit $\co_\blb$ is even. We have 
$\g(0)=\mathfrak{sl}_{r+1}\oplus\te_{2m}$ and \\
\centerline{
$\g(2)=\esi_1+\dots+\esi_{m-1}+\esi_m\vp_1+\esi_{m+1}\vp_r+\esi_{m+2}+\dots+\esi_{2m}$.}

\noindent Therefore, $\ka\simeq\mathfrak{sl}_r\oplus\te_1\simeq \mathfrak{gl}_r$.
Then $\es=\mathfrak{gl}_{r-1}=\glbv$ and the $\es$-module $\g({\ge}3)\oplus\g({\ge}3)^*$ is isomorphic to
$(m{-}1)\BV+(m{-}1)\BV^*+(2m^2{+}m{-}2)\odin$. 
Hence $\es_\star=\mathfrak{gl}_{r-1-2(m-1)}=\mathfrak{gl}_{r+1-2m}$, as required.

{\bf --} \ Since $\es_\star=\mathfrak{gl}_{N-2j}$ for all $\co\in\ups_j$, we obtain $r(\co)=N-1-(N-2j)=2j-1$ 
and $\dim\CS(\co)=\dim\g-\dim\es_\star$. On the other hand, $\rank(e)=j$ for all $\co\in\ups_j$ and hence 
$\CS(\co)\subset \sD_{2j}^o(N)$. Here
$\dim\g-\dim\es_\star=\dim \sD_{2j}^o(N)= 2j(2N-2j)-1$ and therefore $\CS(\co)=\sD_{2j}^o(N)$.

{\sf (iii)} These $SL_N$-orbits are spherical and for them $\g({\ge}3)=\{0\}$. By Theorem~\ref{thm:p94},
this means that $\es=\es_\star$.  If  $\blb=(2^r,1^{N-2r})$, then
\[
  \eus D(\co_\blb)=(\underbrace{0 \cdots 0}_{r-1}\,1\,\underbrace{0\cdots 0}_{N-2r-1}\,1\,\underbrace{0 \cdots 0}_{r-1}) 
\]
and $\g(0)=\mathfrak{sl}_r\oplus\mathfrak{sl}_{N-2r}\oplus\mathfrak{sl}_r\oplus\te_2$. Here
$\g(2)=\esi_1\esi_2\vp_1\vp''$ (i.e., $\mathfrak{sl}_{N-2r}$ acts trivially on $\g(2)$) and
$\ka=\Delta(\mathfrak{sl}_r)\oplus\mathfrak{sl}_{N-2r}\oplus\te_1=\Delta(\mathfrak{sl}_r)\oplus\mathfrak{gl}_{N-2r}$. Then $\es=\es_\star=\te(\mathfrak{sl}_r)\oplus\mathfrak{gl}_{N-2r}$ and
$r(\co_\blb)=\rk\,\slN-\rk\,\es=r$.

In this case, $\Gamma(\co_\blb)$ is freely generated by the dominant weights 
$\varpi_1{+}\varpi_{N-1}, \dots, \varpi_r{+}\varpi_{N-r}$, see~\cite[Sect.\,6]{p03}. Hence 
$\te_{\co_\blb}=\{\mathsf{diag}(\mu_1,\dots,\mu_r,0,\dots,0,-\mu_r,\dots,-\mu_1)\}$. 
This shows that $\CS(\co_\blb)=\ov{G{\cdot}\te_{\co_\blb}}$ belongs to $\sD_{2r}^o(N)$ and
$\sigma_3,\sigma_5,\dots,\sigma_{2r-1}$ vanish on $\CS(\co_\blb)$. Comparing dimensions, we 
obtain $\CS(\co_\blb)=\sD_{2r}^o(N)\cap \{\sigma_3=\dots =\sigma_{2r-1}=0\}$.
\end{proof}

\subsection{$\g=\spn$}   \label{subs:sp} 
For explicit computations with $(\BZ,h)$-gradings, we use the following model of $\spn$.
Let $\cI_n$ be the $n\times n$ matrix with 1's along the antidiagonal and
$\cJ=\begin{pmatrix} 0 & \cI_n \\ -\cI_n & 0 \end{pmatrix}$. Then
$\spn=\{M\in\mathfrak{gl}_{2n} \mid M^t\cJ+\cJ M=0\}$. For this model, $\be(\spn)$ (resp. $\te(\spn)$) is 
the set of upper-triangular (resp. diagonal) matrices in $\spn$. We also identify the determinantal varieties
$\sym_m (2n)$ with subvarieties of $\spn$ using $\cJ$, see Appendix~\ref{appendix}.

Consider the following subposets of $\N/Sp_{2n}$.
\begin{enumerate}
\item $\ups_{\sf max}=\ups_n=\{\co\mid \rank(e)\ge n \ \text{ for } e\in\co\}$;
\item $\ups_j=\{\co\mid \rank(e)=j \ \text{ for } e\in\co\}$, where $j=1,2,\dots,n-1$.
\end{enumerate}
Then  $\N_o/Sp_{2n}=\bigsqcup_{j\ge 1}^n \ups_j$. Let $\blb=(\lb_1,\dots,\lb_p)$ be a ``symplectic''
partition of $2n$. Then
\begin{itemize}
\item \ $\co_\blb\in\ups_{\sf max}$ if and only if $2n-p\ge n$. The only minimal element of $\ups_{\sf max}$ corresponds to the partition $(2^n)$.
\item \ $\co_\blb\in\ups_{j}$ if and only if $2n-p=j\le n-1$. Therefore,
each $\ups_j$ ($j \le n-1$) has a unique minimal and unique maximal element. Namely,
$\blb_{\sf min}(j)=(2^j, 1^{2n-2j})$ and $\blb_{\sf max}(j)=\begin{cases}
(j,2, 1^{2n-2-j}), & \ \text{ if $j$ is even,} \\
(j+1,1^{2n-1-j}), & \ \text{ if $j$ is odd.}  
\end{cases}$
\end{itemize}

\begin{thm}  \label{thm:sp}
Let $e\in\N(\spn)$ and $\co=Sp_{2n}{\cdot}e$.
\begin{itemize}
\item[\sf (i)] \ If\/ $\co\in\ups_{\sf max}$, then $\es_\star=\{0\}$, 
$r(\co)=\rk\,\spn=n$, and $\CS(\co)=\spn$;
\item[\sf (ii)] \ If\/ $\co\in\ups_j$ \ for $j<n$, then $\es_\star=\mathfrak{sp}_{2n-2j}$, 
$r(\co)=j$, and $\CS(\co)\simeq \sym_{2j}(2n)$.
\end{itemize}
\end{thm}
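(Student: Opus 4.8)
The plan is to mirror the proof of Theorem~\ref{thm:sl}: it suffices to compute the canonical generic isotropy subalgebra $\es_\star$ for the diagonal action $(Sp_{2n}:\co\times\co)$, because $\es_\star$ determines $r(\co)=\rk\,\spn-\rk\,\es_\star$ via $(\eus P_4)$, hence (by Theorem~\ref{thm:main-dim}) $\dim\CS(\co)$, and by Proposition~\ref{prop:r=r'} the value of $\es_\star$ is constant along each $\ups_j$, so it is enough to check the two extreme orbits $\co_{\sf min}(j)=\co_{(2^j,1^{2n-2j})}$ and $\co_{\sf max}(j)$ of each $\ups_j$ (and, for (i), the single minimal orbit $\co_{(2^n)}$ of $\ups_{\sf max}$, the maximal one being $\co_{\sf reg}$ which is distinguished). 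For each such orbit I would write down the dominant characteristic $h$ and the weighted Dynkin diagram, read off the $(\BZ,h)$-grading of $\spn$ from the matrix-block picture in the chosen model (diagonal/antidiagonal), identify $\el=\g(0)$ and the $\el$-modules $\g(2),\g(3),\g(4),\dots$, then apply Theorem~\ref{thm:p94} together with complement~(1): pass from $\el$ to the reductive subalgebra $\ka=\el^e$, restrict $\g(2)$ (and higher pieces), take a g.i.s.\ $\es$ for $(\ka:\g(2))$ — equivalently for the doubled action — and finally a g.i.s.\ $\es_\star$ for $(\es:\g({\ge}3)\times\g({\ge}3)^*)$. The arithmetic should produce $\es_\star=\{0\}$ for $\co_{(2^n)}$ and $\es_\star=\spn[2n-2j]$ for $\co\in\ups_j$, $j<n$.

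For the spherical orbits $\co_{\sf min}(j)=\co_{(2^j,1^{2n-2j})}$ one has $\g({\ge}3)=\{0\}$, so $\es=\es_\star$ and only the $(\BZ,\le2)$-part is needed: the grading is the short one attached to $\varpi_j$ (two marks equal to $1$ on the Dynkin diagram, or one mark $2$ at the right end when $j=n$), $\g(0)=\mathfrak{sl}_j\oplus\spn[2n-2j]\oplus\te_1$, and $\g(2)$ is a symmetric-square-type module on which $\spn[2n-2j]$ acts trivially; a direct generic-stabiliser computation for this $\el$-module (as in~\cite{p94, p03}) gives $\es_\star=\spn[2n-2j]\oplus\te(\mathfrak{sl}_j)$ — i.e.\ $\rk\,\es_\star=(j-1)+(n-j)=n-j$, whence $r(\co)=n-(n-j)=j$. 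Wait: one must double-check this against the claimed $r(\co)=j$; since $\Gamma_{\co_{(2^j,1^{2n-2j})}}$ is freely generated by $\varpi_2,\varpi_4,\dots,\varpi_{2j}$ in suitable conventions (cf.~\cite[Sect.\,6]{p03}), $\te_\co=\lg\Gamma_\co\rg$ is $j$-dimensional, confirming $r(\co)=j$ and pinning down the $\te_\star$-part of $\es_\star$. For $\co_{\sf max}(j)$ the $(\BZ,h)$-grading is longer and its shape depends on the parity of $j$ (one length-$(j+1)$ row of the partition when $j$ is odd, a length-$j$ plus a length-$2$ row when $j$ is even); here I would follow the template of $(\blacklozenge_3)$–$(\blacklozenge_4)$: compute $\ka$, then $\es$, then decompose $\g({\ge}3)\oplus\g({\ge}3)^*$ as an $\es$-module (it should come out as a sum of copies of the tautological $Sp$-module plus trivials), and peel off a generic stabiliser to land on $\spn[2n-2j]$ again. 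Once $\es_\star$ is known, $r(\co)=\rk\,\es_\star$-complement and $\dim\CS(\co)=\dim\spn-\dim\es_\star$ follow, and since $\CS(\co)\subset\sym_{2j}(2n)$ (because $\rank(e)=j$ forces $\rank(M)\le 2j$ for $M\in\CS(\co)$, using Terracini and the rank-additivity of generic tangent vectors exactly as in~\cite{omoda}), a dimension count against $\dim\sym_{2j}(2n)$ from Appendix~\ref{appendix} forces equality $\CS(\co)\simeq\sym_{2j}(2n)$.

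The main obstacle I expect is the $\es$-module bookkeeping for $\co_{\sf max}(j)$, particularly keeping the two parity cases straight and verifying that no spurious central torus survives in $\es_\star$ (the symplectic and orthogonal cases are exactly where Omoda's analysis had gaps, and the delicate point is that $\g(2i)$ is an \emph{orthogonal} $\ka$-module for each $i$, by~\cite[Prop.\,1.2]{aif99}, which is what makes the successive generic stabilisers reductive and ultimately symplectic rather than general-linear). A secondary technical point is the identification of the subvariety: showing $\CS(\co)\subseteq\sym_{2j}(2n)$ is easy, but to conclude equality from equal dimensions one also needs $\sym_{2j}(2n)$ to be irreducible (it is — it is the closure of a single $Sp_{2n}$-orbit on symmetric matrices, or can be quoted from Appendix~\ref{appendix}), so the containment of irreducibles of equal dimension closes the argument. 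Everything else — the reduction to two orbits per $\ups_j$, the formula $r(\co)=\rk\,\spn-\rk\,\es_\star$, and the passage $\dim\CS(\co)=\dim\g-\dim\es_\star$ — is a direct quotation of Theorem~\ref{thm:main-dim}, Proposition~\ref{prop:r=r'}, Theorem~\ref{thm:p94}, and its complement~(1), so the proof is genuinely a guided computation once the grading data are in hand.
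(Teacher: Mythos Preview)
Your overall strategy matches the paper's exactly: check $\es_\star$ at the extremes $\co_{\sf min}(j)$ and $\co_{\sf max}(j)$ of each $\ups_j$ via the $(\BZ,h)$-grading and Theorem~\ref{thm:p94}, then close with the dimension count against $\sym_{2j}(2n)$. The handling of $\co_{\sf max}(j)$ and the final containment-plus-irreducibility argument are fine.

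There is, however, a genuine error in your treatment of $\co_{\sf min}(j)=\co_{(2^j,1^{2n-2j})}$. First, the weighted Dynkin diagram carries a \emph{single} mark `$1$' at node $j$ (the $\eus C_n$ diagram is not symmetric), so $\g(0)=\mathfrak{sl}_j\oplus\mathfrak{sp}_{2n-2j}\oplus\te_1$ and $\g(2)=\vp_1^2\esi_1^2$, the second symmetric power of the standard $\mathfrak{sl}_j$-module twisted by a character. The point you miss is that $\ka=\el^e$ is \emph{not} built from $\mathfrak{sl}_j$: the stabiliser in $GL_j\times\bbk^\times$ of a nondegenerate quadratic form is $O_j$, so $\ka=\mathfrak{so}_j\oplus\mathfrak{sp}_{2n-2j}$. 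Then $\es$ is a generic isotropy subalgebra for the $\ka$-module $\g(2)$; as an $\mathfrak{so}_j$-module this is $\odin\oplus(\text{traceless symmetric})$, and the generic stabiliser of $SO_j$ on symmetric matrices is \emph{finite}, not a torus. Hence $\es=\es_\star=\mathfrak{sp}_{2n-2j}$ with no $\te(\mathfrak{sl}_j)$ summand. Your stated answer $\es_\star=\mathfrak{sp}_{2n-2j}\oplus\te(\mathfrak{sl}_j)$ is the pattern from the $\slN$ case {\sf (iii)} (where $\ka$ contains a diagonal $\mathfrak{sl}_r$ acting adjointly on $\g(2)$), and it does not transfer here. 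Two internal consistency checks flag this: your arithmetic $(j-1)+(n-j)$ equals $n-1$, not the $n-j$ you wrote; and at $j=n$ your formula would give $\es_\star=\te(\mathfrak{sl}_n)$, hence $r(\co_{(2^n)})=1$, contradicting part {\sf (i)}. Once $\ka$ and the $\mathfrak{so}_j$-module structure of $\g(2)$ are identified correctly, the paper's value $\es_\star=\mathfrak{sp}_{2n-2j}$ drops out immediately, and $r(\co)=n-(n-j)=j$ as required.
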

\begin{proof}
{\sf (i)}  Since $\co_{(2^n)}=\osp$, we already know that $r(\co_{(2^n)})=n$, see 
Example~\ref{ex:max-sph-full}.

{\sf (ii)}   For $\blb=\blb_{\sf min}(j)=(2^j,1^{2n-2j})$, we have 
$h=\mathsf{diag}(\underbrace{1,\dots,1}_{j},\underbrace{0,\dots,0}_{2n-2j},\underbrace{-1,\dots,-1}_{j})$,
\\[.6ex]
\centerline{ 
$ \eus D(\co_\blb)=(\underbrace{0 \cdots 0}_{j-1}\,1\,\underbrace{0\cdots 0\Leftarrow 0}_{n-j})$, 
\ and \ $\g(0)=\mathfrak{sl}_j\oplus\mathfrak{sp}_{2n-2j}\oplus\te_1$.}
\\ 
Here
$\g(1)=\vp_1\vp'_1\esi_1$, $\g(2)=\vp_1^2\esi_1^2$, and $\g({\ge}3)=\{0\}$. Therefore, 
$\ka=\mathfrak{so}_j\oplus\mathfrak{sp}_{2n-2j}$ and $\es=\es_\star=\mathfrak{sp}_{2n-2j}$. 
Note that this also works for $j=n$, i.e., for part {\sf (i)} above.

For $\blb=\blb_{\sf max}(j)$, one has to consider two possibilities. 

\noindent
\textbullet \ \ If $j=2m-1$ is odd, then $\blb=(2m,1^{2n-2m})$ and $2m-1< n$. In this case,
\[
   h=\mathsf{diag}(2m-1,\dots,3,1,\underbrace{0,\dots,0}_{2n-2m},-1,-3,\dots,-2m+1)  
\]
and $\eus D(\co_\blb)=(\underbrace{2 \cdots 2}_{m-1}\,1\,\underbrace{0\cdots 0\Leftarrow 0}_{n-m})$.
Then $\g(0)=\mathfrak{sp}_{2n-2m}\oplus\te_m$ and $\g(2)=\esi_1+\dots +\esi_m$, i.e., 
$\mathfrak{sp}_{2n-2m}$ acts trivially on $\g(2)$. Hence $\es=\ka=\mathfrak{sp}_{2n-2m}$.
In this case, $\g(2i-1)=\esi_i\vp_1$ for $i=1,2,\dots,m-1$, whereas $\mathfrak{sp}_{2n-2m}$ acts also trivially on $\g(2i)$ with $i\ge 2$. 
\\ \indent
Therefore, $\g({\ge} 3)=(m-1)\vp_1+ (m^2-m)\odin$ as $\es$-module. Thus, a generic isotropy 
subalgebra for $(\es: \g({\ge} 3)\oplus \g({\ge} 3)^*)$ is equal to 
$\mathfrak{sp}_{2n-4m+2}=\mathfrak{sp}_{2n-2j}$, as required.

\noindent
\textbullet \ \ If $j=2m$ is even, then $\blb=(2m,2,1^{2n-2m-2})$ and $2m<n$. Then
\[
   h=\mathsf{diag}(2m-1,\dots,3,1,1,\underbrace{0,\dots,0}_{2n-2m-2},-1,-1,-3,\dots,-2m+1)   
\]
and $\eus D(\co_\blb)=(\underbrace{2 \cdots 2}_{m-1}\,11\,\underbrace{0\cdots 0\Leftarrow 0}_{n-m-1})$.
We omit further details.

{\bf --} \ Since $\es_\star=\mathfrak{sp}_{2n-2j}$ for all $\co\in\ups_j$, we obtain $r(\co)=j$ 
and $\dim\CS(\co)=\dim\g-\dim\es_\star$. On the other hand, $\rank(e)=j$ for all $\co\in\ups_j$ and hence 
$\CS(\co)\subset \sym_{2j}(2n)$. Here
$\dim\g-\dim\es_\star=\dim \sym_{2j}(2n)= j(4n-2j+1)$ and therefore $\CS(\co)=\sym_{2j}(2n)$.
\end{proof}

\subsection{$\g=\soN$}   
\label{subs:so}
Since $\mathfrak{so}_5\simeq\mathfrak{sp}_4$ and $\mathfrak{so}_6\simeq\mathfrak{sl}_4$,
we assume that $N\ge 7$. For explicit computations with $(\BZ,h)$-gradings, we use the model of $\soN$
as the set of skew-symmetric matrices w.r.t.~the antidiagonal. Accordingly, the determinantal variety
$\sk_{2p}(N)\subset\soN$ consists of matrices of the same form, see Appendix~\ref{appendix}.

Let us consider the following subposets of $\N/SO_{N}$.
\begin{enumerate}
\item $\ups_{\sf max}=\{\co\mid \rank(e)\ge [N/2] \ \ \& \ \text{ $e^2\ne 0$ for $e\in\co$}\}$;
\item $\ups_m=\{\co\mid \rank(e)=2m< [N/2] \ \ \& \ \text{ $e^2\ne 0$ for $e\in\co$}\}$ for $m\ge 1$;
\item $\ups_{(1,m)}=\{\co_{(2^{2m}, 1^{N-4m})}\}$ for $m=1,2,\dots, [N/4]$.
\end{enumerate}
Then each $\ups_{(1,m)}$ consists of a single spherical $SO_N$-orbit and 
\[
    {\N}_o/SO_N=\ups_{\sf max}\sqcup (\bigsqcup_{m\ge 1}\ups_m)\sqcup (\bigsqcup_{m=1}^{[N/4]}\ups_{(1,m)}) .
\]
Let $\blb=(\lb_1,\dots,\lb_p)$ be an ``orthogonal'' partition of $N$. Then $N-p$ is even and
\begin{itemize}
\item \ $\co_\blb\in\ups_{\sf max}$ if and only if $N-p\ge N/2$ and $\lb_1\ge 3$. Hence the minimal elements of $\ups_{\sf max}$ correspond to the partitions
\begin{itemize}
\item $(3,2^{2m-2},1)\sim\osp$,  if $N=4m$, i.e., $\g=\eus D_{2m}$ with $m\ge 2$;
\item $(3,2^{2m-2},1,1)\sim\osp$,  if $N=4m+1$, i.e., $\g=\eus B_{2m}$ with $m\ge 2$;
\item $(3,3,2^{2m-2})$,  if $N=4m+2$, i.e., $\g=\eus D_{2m+1}$ with $m\ge 2$;
\item $(3,2^{2m})\sim\osp$,  if $N=4m+3$, i.e., $\g=\eus B_{2m+1}$ with $m\ge 1$;
\end{itemize}
\item \ $\co_\blb\in\ups_{m}$ if and only if $N-p=2m<[N/2]$ and $\lb_1\ge 3$. Therefore,
each $\ups_m$ has a unique minimal and unique maximal element, the corresponding partitions being
$\blb_{\sf min}(2m)=(3, 2^{2m-2}, 1^{N+1-4m})$ and $\blb_{\sf max}(2m)=(2m+1, 1^{N-2m-1})$. Note
that the condition $2m<[N/2]$ means that $N+1-4m\ge 3$.
\end{itemize}

\begin{thm}   \label{thm:so}
Let $e\in\N(\soN)$ and $\co=SO_N{\cdot}e$.
\begin{itemize}
\item[\sf (i)] \ If\/ $\co\in\ups_{\sf max}$, then $\es_\star=\{0\}$, $r(\co)=\rk\,\soN=[N/2]$, and 
$\CS(\co)=\soN$;
\item[\sf (ii)] \ If\/ $\co\in\ups_m$, then $\es_\star=\mathfrak{so}_{N-4m}$, $r(\co)=2m$, and 
$\CS(\co)=\sk_{4m}(N)$;
\item[\sf (iii)] \ If\/ $\blb=(2^{2m}, 1^{N-4m})$, then $\co_\blb$ is spherical and $4m\le N$.
Here $\es_\star=\mathfrak{so}_{N-4m}\oplus (\tri)^m$, $r(\co)=\frac{1}{2}\rank(e)=m$, and
$\CS(\co_{\blb})$ is of codimension $3m$ in $\sk_{4m}(N)$.
\end{itemize}
\end{thm}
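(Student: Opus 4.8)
\emph{Overall strategy.} The argument will run parallel to the proofs of Theorems~\ref{thm:sl} and~\ref{thm:sp}. For each of the subposets $\ups_{\sf max}$, $\ups_m$ and $\ups_{(1,m)}=\{\co_{(2^{2m},1^{N-4m})}\}$ of $\N_o/SO_N$ introduced above, I will determine the canonical generic isotropy subalgebra $\es_\star$ of the diagonal action $(SO_N:\co\times\co)$ on a minimal set of orbits, using Theorem~\ref{thm:p94} and its complements: attach the $(\BZ,h)$-grading $\g=\bigoplus_i\g(i)$ to $\co$, take $\el=\g(0)$, read off the $\el$-modules $\g(2),\g(3),\dots$ from the block form of $\soN$ in the antidiagonal model, take $\ka=\el^e$ (which is read off from the partition), then a generic stabiliser $\es$ for the $\ka$-module $\g(2)$, and finally a generic stabiliser $\es_\star$ for the doubled action $(\es:\g({\ge}3)\times\g({\ge}3)^*)$; by the complements to Theorem~\ref{thm:p94} this $\es_\star$ is canonical for $(SO_N:\co\times\co)$. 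Since $\co'\subset\bco$ implies $\lg\Gamma_{\co'}\rg\subseteq\lg\Gamma_\co\rg$, the function $\co\mapsto r(\co)$ is non-decreasing along the closure order; as $\ups_{\sf max}$ has maximal element $\co_{\sf reg}$ and each $\ups_m$ has unique minimal orbit $\co_{\sf min}(2m)=\co_{(3,2^{2m-2},1^{N+1-4m})}$ and unique maximal orbit $\co_{\sf max}(2m)=\co_{(2m+1,1^{N-2m-1})}$, it will suffice to compute $\es_\star$ for the four minimal orbits of $\ups_{\sf max}$ listed before the theorem (for (i)), for both $\co_{\sf min}(2m)$ and $\co_{\sf max}(2m)$ (for (ii)), and for $\co_{(2^{2m},1^{N-4m})}$ (for (iii)). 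Once $\es_\star$ is shown to be $\{0\}$ (resp.\ $\mathfrak{so}_{N-4m}$) on both the minimal and the maximal orbit of the relevant poset, monotonicity makes $r(\co)$ constant on it and Proposition~\ref{prop:r=r'} then makes $\lg\Gamma_\co\rg$, hence $\es_\star$ and $\CS(\co)$, constant on it as well.

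\emph{The grading computations.} For $\co_{(2^{2m},1^{N-4m})}$ in (iii) one has $\g({\ge}3)=\{0\}$, so $\es_\star=\es$: here $\el=\mathfrak{gl}_{2m}\oplus\mathfrak{so}_{N-4m}$, $\g(2)=\wedge^2(\bbk^{2m})$ with $\mathfrak{so}_{N-4m}$ acting trivially, $\ka=\mathfrak{sp}_{2m}\oplus\mathfrak{so}_{N-4m}$, and a generic stabiliser of $Sp_{2m}$ on $\wedge^2(\bbk^{2m})$ is $(\tri)^m$ (a generic bivector, together with the symplectic form, splits $\bbk^{2m}$ into $m$ two-dimensional symplectic subspaces), so that $\es_\star=(\tri)^m\oplus\mathfrak{so}_{N-4m}$. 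For the remaining representatives — the four minimal orbits of $\ups_{\sf max}$, and $\co_{\sf min}(2m)$, $\co_{\sf max}(2m)$ — one has $\g({\ge}3)\ne\{0\}$, and the final step is essential. Typically the intermediate stabiliser is $\es\cong(\tri)^k\oplus\mathfrak{so}_l$, and $\g({\ge}3)$, restricted to $\es$, is a sum of several copies of the $2$-dimensional modules of the $\tri$-factors together with trivial modules; the doubled action of each $\tri$ on at least two copies of its standard module has trivial generic stabiliser, so the $(\tri)^k$ collapses and $\es_\star=\mathfrak{so}_l$ — with $l=N-4m$ for $\co_{\sf min}(2m)$ and $\co_{\sf max}(2m)$, and $l=0$ for the four minimal orbits of $\ups_{\sf max}$. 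Throughout, $h$, the weighted Dynkin diagram, $\el$, the modules $\g(i)$ and $\ka$ are written down explicitly from the partition, exactly as in the proof of Theorem~\ref{thm:sl}.

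\emph{From $\es_\star$ to $\CS(\co)$.} Given $\es_\star$, Theorem~\ref{thm:main-dim} gives $r(\co)=\rk\soN-\rk\es_\star$ and $\dim\CS(\co)=\dim\soN-\dim\es_\star$, which yields the stated values $r(\co)=[N/2]$, $2m$, $m$ in the three cases (in (iii), $r(\co)=\frac{1}{2}\rank(e)=m$). In case (i), $\CS(\co)=\soN$ by Corollary~\ref{cor:sec=g}. In cases (ii) and (iii), every $A\in\co$ has $\rank(A)=2m$, and since $\CS(\co)$ is the closure of $\bigcup_{x,y\in\co}(\bbk x+\bbk y)$, every element of $\CS(\co)$ has rank at most $4m$; thus $\CS(\co)\subseteq\sk_{4m}(N)$. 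A direct count gives $\dim\soN-\dim\mathfrak{so}_{N-4m}=2m(2N-4m-1)=\dim\sk_{4m}(N)$ (see Appendix~\ref{appendix}), so in (ii) the irreducible variety $\CS(\co)$ fills $\sk_{4m}(N)$, while in (iii) the extra factor $(\tri)^m$ drops $\dim\CS(\co)$ by $3m$, giving codimension $3m$ inside $\sk_{4m}(N)$. (The explicit description of $\te_\co$ in case (iii) then follows as in Example~\ref{ex:ex}.)

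\emph{Main difficulty.} The delicate point will be computing $\es_\star$ for the orbits with $\g({\ge}3)\ne\{0\}$ — above all $\co_{\sf min}(2m)$, $\co_{\sf max}(2m)$ and the non-spherical minimal orbit $\co_{(3,3,2^{2m-2})}$ of $\eus D_{2m+1}$. One cannot stop at the intermediate stabiliser $\es$, which still carries a factor $(\tri)^k$; one must track the $\es$-module structure of $\g({\ge}3)$ in the orthogonal (antidiagonal) model — including the parity bookkeeping and the precise embedding of the diagonally placed $\ka$ inside $\g(0)$ — to verify that the generic stabiliser of the doubled action on $\g({\ge}3)$ really collapses all the $\tri$-factors and leaves exactly $\mathfrak{so}_{N-4m}$ (resp.\ $\{0\}$), rather than a larger reductive subalgebra. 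The rest is bookkeeping strictly parallel to Theorems~\ref{thm:sl}--\ref{thm:sp}.
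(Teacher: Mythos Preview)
Your overall strategy coincides with the paper's, and your treatment of (iii) and of the $\eus D_{2m+1}$ minimal orbit $(3,3,2^{2m-2})$ in (i) is essentially correct. (For the other three minimal orbits of $\ups_{\sf max}$ no computation is needed: they equal $\osp$, and $r(\osp)=\rk\g$ is already known from Example~\ref{ex:max-sph-full}.)

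There is, however, a genuine error in your sketch of~(ii). You assert that for both $\co_{\sf min}(2m)$ and $\co_{\sf max}(2m)$ the intermediate stabiliser has the shape $\es\cong(\tri)^k\oplus\mathfrak{so}_l$ with $l=N-4m$, the $\mathfrak{so}_{N-4m}$ acting trivially on $\g({\ge}3)$, and the $(\tri)^k$ then collapsing. This is \emph{not} what happens for $\co_{\sf max}(2m)=\co_{(2m+1,1^{N-2m-1})}$. There $h=\mathsf{diag}(2m,\dots,2,0^{N-2m},-2,\dots,-2m)$, so $\g(0)=\mathfrak{so}_{N-2m}\oplus\te_m$ and $\g(2)$ is $(m{-}1)$ one-dimensional pieces plus one copy of the standard $\mathfrak{so}_{N-2m}$-module. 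Hence $\ka=\mathfrak{so}_{N-2m-1}$ and $\es=\mathfrak{so}_{N-2m-2}=\sov$ --- a single orthogonal factor, with no $\tri$'s. The reduction to $\mathfrak{so}_{N-4m}$ occurs only at the last step: $\g({\ge}3)\vert_\es=(m{-}1)\BV\oplus(m^2{-}1)\odin$, so the doubled action is on $2(m{-}1)\BV$ and
\[
\es_\star=\mathsf{g.i.s.}\bigl(SO(\BV):2(m{-}1)\BV\bigr)=\mathfrak{so}_{\dim\BV-2(m-1)}=\mathfrak{so}_{N-4m}.
\]
The mechanism is ``an orthogonal algebra shrinks under copies of its standard module'', not ``$\tri$-factors collapse''. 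If you follow your outline literally you will be looking for $\tri$-summands in $\es$ that are not there, and the crucial fact that $\mathfrak{so}_{N-4m}$ acts \emph{non}-trivially on $\g({\ge}3)$ (so that its rank genuinely drops) is precisely what your sketch denies.
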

\begin{proof}
{\sf (i)} If $N\ne 4m+2$, then $-1\in \eus W$ and $\osp$ is the minimal element of $\ups_{\sf max}$.
Hence $\es_\star=\{0\}$ and $r(\co)=\rk\,\soN$ for all $\co\in \ups_{\sf max}$. Therefore, we have only to 
consider the non-spherical orbit $\co_\blb$ associated with $\blb=(3,3,2^{2m-2})$ for $\eus D_{2m+1}$.
Here \\[.6ex]
\centerline{
$h=\mathsf{diag}(2,2,\underbrace{1,\dots,1}_{2m-2},0,0,\underbrace{-1,\dots,-1}_{2m-2},-2,-2)$ \ \& \ 
$\ap_i(h)=1$ for $i=2,2m,2m+1$.} 
\\[.6ex]
Therefore, $\g(0)=\mathfrak{sl}_{2m-2}\oplus\tri\oplus\te_3$ and
$\g(1)=\esi_1\vp_1\vp'+\esi_2\vp_{2m-1}+\esi_3\vp_{2m-1}$. Then
\begin{itemize}
\item $\g(2)=\esi_1\esi_2\vp'+\esi_1\esi_3\vp'+ \esi_2\esi_3\vp_{2m-2}$;
\item $\g(3)=\esi_1\esi_2\esi_3\vp_{2m-1}\vp'$;
\item $\g(4)=\esi_1^2\esi_2\esi_3$.
\end{itemize}
Here $\ka=\mathfrak{sp}_{2m-2}\oplus\mathfrak{so}_2=\mathfrak{sp}_{2m-2}\oplus\te_1$ and
$\es=\mathsf{g.i.s.}(\ka:\g(2))=(\tri)^{m-1}\subset \mathfrak{sp}_{2m-2}$. Therefore,
$\g(3)\vert_\es=2(\vp^{(1)}+\dots +\vp^{(m-1)})$ and hence $\es_\star=\{0\}$.

{\sf (ii)} As usual, it suffices to check that $\es_\star=\mathfrak{so}_{N-4m}$ for both
$\co_{\sf min}(2m)$ and $\co_{\sf max}(2m)$. 

{\bf --} \ The orbit $\co_{\sf min}(2m)\sim (3,2^{2m-2},1^{N+1-4m})$
is spherical,  and related calculations for it can be found in~\cite[Sect.\,4]{p94}.  

{\bf --} \ For $\co_{\sf max}(2m)\sim (2m+1, 1^{N-2m-1})$, the dominant characteristic is \\
$h=\mathsf{diag}(2m,\dots,4,2,\underbrace{0,\dots,0}_{N-2m},-2,-4,\dots,-2m)$.  Hence
$\ap_i(h)=2$ for $i\le m$ and $\ap_i(h)=0$ for $i > m$. Here $\g(0)=\mathfrak{so}_{N-2m}\oplus\te_m$ and 
$\g(2)=\esi_+\dots+\esi_{m-1}+\esi_m\vp_1$. Therefore, $\ka=\mathfrak{so}_{N-2m-1}$ and
$\es=\mathfrak{so}_{N-2m-2}=\mathfrak{so}(\BV)$. A direct calculation shows that 
$\g({\ge}3)\vert_\es=(m-1)\BV+(m^2-1)\odin$. Thus,
\[
  \es_\star=\mathsf{g.i.s.}\bigl(SO(\BV): 2(m{-}1)\BV\bigr)=\mathfrak{so}_{\dim\BV-2m+2}=\mathfrak{so}_{N-4m} .
\]

{\bf --} \ Since $\es_\star=\mathfrak{so}_{N-4m}$ for all $\co\in\ups_j$, we obtain $r(\co)=2m$ and 
$\dim\CS(\co)=\dim\g-\dim\es_\star$. On the other hand, $\rank(e)=2m$ for all $\co\in\ups_j$ and hence 
$\CS(\co)\subset \sk_{4m}(N)$. Here
$\dim\g-\dim\es_\star=\dim \sk_{4m}(N)= 2m(2N-4m-1)$ and therefore $\CS(\co)=\sk_{4m}(N)$.

{\sf (iii)} For $\blb=(2^{2m},1^{N-4m})$, the orbit $\co_\blb$ is spherical and
\\[.6ex]
\centerline{
$h=\mathsf{diag}(\underbrace{1,\dots,1}_{2m},\underbrace{0,\dots,0}_{N-4m},\underbrace{-1,\dots,-1}_{2m})$.}
\\[.6ex]
Hence $\g(0)=\mathfrak{sl}_{2m}\oplus\mathfrak{so}_{N-4m}\oplus\te_1$,
$\g(2)=\esi^2\vp_2$, and $\g({\ge}3)=\{0\}$. Note that $\mathfrak{so}_{N-4m}$ acts trivially on $\g(2)$.
Then $\ka=\mathfrak{sp}_{2m}\oplus\mathfrak{so}_{N-4m}$ and 
$\es_\star=\es=(\tri)^m\oplus\mathfrak{so}_{N-4m}$. The rest is explained in Example~\ref{ex:ex}.
\end{proof}

\section{The defective nilpotent orbits in the exceptional Lie algebras}
\label{sect:defective-exc}
\noindent
Here we describe the nilpotent orbits $\co$ in the exceptional Lie algebras such that $\Sec(\ov{\BP\co})$ 
is defective. By Corollary~\ref{cor:pro-defekt}, this is equivalent to that $r(\co)< \rk\,\g$. By 
Corollary~\ref{cor:sec=g}, another equivalent condition is that $\CS(\co)\ne\g$. In this case, the generic 
stabiliser $S_\star$ for the diagonal action $(G:\co\times\co)$ must be infinite and we also point out
$\es_\star$ for all defective orbits.

\begin{thm}   \label{thm:CS-except}
The defective nilpotent orbits $\co$ in the exceptional Lie algebras are:
\begin{itemize}
\item \ $\mathsf A_1, 2\mathsf A_1, 3\mathsf A_1, \mathsf A_2$ \ -- \ for $\eus E_6$ and $\eus E_8$;
\item \ $\mathsf A_1, 2\mathsf A_1, (3\mathsf A_1)', (3\mathsf A_1)'', \mathsf A_2$ \ -- \ for $\eus E_7$;
\item \ $\mathsf A_1, \tilde{\mathsf A}_1$ \ -- \ for $\eus F_4$;
\item \ $\mathsf A_1$  \ -- \ for $\eus G_2$.
\end{itemize}
The orbits $\mathsf A_2$ for $\g=\eus E_n$ $(n=6,7,8)$ are of complexity~$2$, whereas all other orbits 
are spherical.
\end{thm}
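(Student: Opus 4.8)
The plan is to reduce the statement to a finite check, orbit by orbit, using the machinery already assembled. By Corollary~\ref{cor:pro-defekt} and Corollary~\ref{cor:sec=g}, an orbit $\co$ is defective precisely when $r(\co)<\rk\,\g$, equivalently when the generic stabiliser $\es_\star$ for the diagonal action $(G:\co\times\co)$ has positive rank. So the task splits into two parts: (a) show that for each of the listed orbits $\es_\star\ne\{0\}$ (indeed that $\rk\,\es_\star>0$), and (b) show that for every orbit \emph{not} in the list one has $r(\co)=\rk\,\g$, i.e.\ $\es_\star$ is finite. For part (b) the most efficient route is the monotonicity provided by Proposition~\ref{prop:r=r'}: if $\co'\curle\co$ and $r(\co')=\rk\,\g$, then $r(\co)=\rk\,\g$ as well (since $\rk\,\es_\star$ can only drop as one passes to a larger orbit, because $\lg\Gamma_{\co'}\rg\subset\lg\Gamma_\co\rg$). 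Hence it suffices, for each exceptional $\g$, to locate a small antichain of orbits $\co$ with $r(\co)=\rk\,\g$ whose union of closures covers all orbits outside the claimed list; concretely, one identifies the minimal orbits \emph{not} on the list and verifies $r=\rk\,\g$ there. This turns (b) into finitely many computations and then a poset-covering argument read off the Hasse diagrams in \cite[IV.2]{spalt}.

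The actual computations of $r(\co)$ (equivalently of $\es_\star$) for the relevant small orbits are carried out with Theorem~\ref{thm:p94} and its complements, exactly as in the classical cases of Section~\ref{sect:comput-c-r}: for $\co=G{\cdot}e$ one writes down the dominant characteristic $h$, the $(\BZ,h)$-grading $\g=\bigoplus_i\g(i)$, the Levi $\el=\g(0)$, a generic stabiliser $\ka=L^e$ for the $L$-module $\g(2)$, then $\es=\mathsf{g.i.s.}(\ka:\g(2))$, and finally $\es_\star=\mathsf{g.i.s.}\bigl(\es:\g({\ge}3)\oplus\g({\ge}3)^*\bigr)$. For the orbits on the list — which are all either spherical (so $\g({\ge}3)=0$ and $\es_\star=\es$) or have complexity $2$ — this is short: e.g.\ for $\mathsf A_1=\omin$ one has $r=1<\rk\,\g$ by Example~\ref{ex:defekt-omin}; for the spherical orbits $2\mathsf A_1,3\mathsf A_1$ (and $\tilde{\mathsf A}_1$ in $\eus F_4$) the values of $c$ and $r$ are already recorded in \cite{p94}, giving $r<\rk\,\g$; and for $\mathsf A_2$ in $\eus E_{6,7,8}$ the grading has $\ap_i(h)$ supported on the two end nodes of an $\eus A_2$-subdiagram, one computes $\el$, the $\g(2)$-stabiliser, and checks directly that $c(\co)=2$ and $r(\co)=\rk\,\g-\mathrm{rk}\,\es_\star<\rk\,\g$. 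The statement that all listed orbits except these $\mathsf A_2$'s are spherical is then just a comparison with the classification of spherical nilpotent orbits.

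For part (b) proper one must still exhibit, in each type, the covering family. For instance in $\eus G_2$ the orbit $\tilde{\mathsf A}_1$ (the second smallest) already has $r=\rk\,\g=2$ — this is Example~\ref{ex:max-sph-full}, since $\tilde{\mathsf A}_1=\osp$ and $-1\in\eus W(\eus G_2)$ — and it dominates everything but $\mathsf A_1$, so $\mathsf A_1$ is the only defective orbit. In $\eus F_4$ the orbit $\mathsf A_1+\tilde{\mathsf A}_1=\osp$ again has $r=\rk\,\g$ and dominates all orbits except $\mathsf A_1,\tilde{\mathsf A}_1$. For $\eus E_7$ and $\eus E_8$ one similarly checks $r=\rk\,\g$ at the minimal orbits just above the listed ones (the next layer in the Hasse diagram: orbits of type $\mathsf A_2+\mathsf A_1$, $4\mathsf A_1$, etc.), using the same $(\BZ,h)$-grading computation, and then invokes Proposition~\ref{prop:r=r'} to propagate $r=\rk\,\g$ upward. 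The main obstacle is purely bookkeeping: for $\eus E_7$ and especially $\eus E_8$ one has to organise the $(\BZ,h)$-grading calculations for the handful of ``threshold'' orbits so that the poset-covering argument is airtight — there is no conceptual difficulty, but the Levi subalgebras and the modules $\g({\ge}3)$ must be extracted correctly from the weighted Dynkin diagrams and the generic-stabiliser bookkeeping (sums of vector and spin representations of orthogonal and exceptional subalgebras) must be done without slips; the split cases $(3\mathsf A_1)'$ vs.\ $(3\mathsf A_1)''$ in $\eus E_7$ also need a separate look since they are not related by the closure order.
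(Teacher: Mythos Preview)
Your proposal is correct and follows essentially the same strategy as the paper: verify $r(\co)<\rk\,\g$ for the listed orbits, then use monotonicity of $\lg\Gamma_\co\rg$ together with the Hasse diagrams to propagate $r=\rk\,\g$ upward from a few threshold orbits. The one simplification you miss is that the paper exploits Example~\ref{ex:max-sph-full} not just for $\eus G_2$ and $\eus F_4$ but also for $\eus E_7$ and $\eus E_8$: since $-1\in\eus W(\g)$ there as well, one has $r(\osp)=\rk\,\g$ with $\osp=4\mathsf A_1$, and the Hasse diagrams show that every non-listed orbit lies above $\osp$, so no $(\BZ,h)$-grading computation is needed for these four algebras. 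Only $\eus E_6$ (where $-1\notin\eus W$ and $\osp=3\mathsf A_1$ is itself defective) genuinely requires the explicit threshold check, and the paper carries it out at the single orbit $\mathsf A_2{+}\mathsf A_1$.
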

\begin{proof}
1) \ Let us prove that $\CS(\co)\ne\g$ for all orbits in the theorem. 
\\ \indent
The orbits $\co=m\mathsf A_1$ are spherical (with or without `prime' or `tilde'). 
For all spherical nilpotent orbits, the numbers $r(\co)$  can be found in~\cite[Sect.\,6]{p03}, where orbits 
of the exceptional groups are represented by the weighted Dynkin diagrams ({\it\bfseries wDd}). For the 
cases above, one has $r(\co)<\rk\,\g$, see Table~\ref{table:exc-defective}.

\noindent
If the highest root of $\g$ is fundamental, then the orbit $\mathsf A_2$ has a uniform description.  
The {\it\bfseries wDd} of $\mathsf A_2$ is ``twice'' the {\it\bfseries wDd} of the minimal orbit 
$\omin=\mathsf A_1$. That is, the {\it\bfseries wDd} for $\mathsf A_1$ has a unique nonzero mark `1', 
which has to be replaced with the mark `2'. Using this fact and Theorem~\ref{thm:p94}, one proves
that here we always have $c(\mathsf A_2)=2$ and $r(\mathsf A_2)=4$.  

2) \ Recall that $\osp$ is the maximal spherical nilpotent orbit in $\g$. 
\\ \indent
If $\g\ne\eus E_6$, then $-1\in \eus W(\g)$, $r(\osp)=\rk\,\g$, and $\CS(\osp)=\g$, see 
Example~\ref{ex:max-sph-full}. Therefore, if $\co\in\N/G$ and $\bco\supset\co_{\sf sph}$, then 
$\CS(\co)=\g$ as well. Looking at the Hasse diagrams of $(\N/G,\curle)$~~\cite[IV.2]{spalt} and discarding 
all such non-defective orbits, we get only the orbits pointed out above. 

If $\g=\eus E_6$, then $\osp=3\mathsf A_1$ and $r(\osp)=4$. The only orbit covering $3\mathsf A_1$ in
$\N/\GR{E}{6}$ is $\mathsf A_2$, where $r(\mathsf A_2)=4$ as well. The only orbit covering 
$\mathsf A_2$ is $\mathsf A_2{+}\mathsf A_1$, where the {\it\bfseries wDd} is
\[
\eus D(\mathsf A_2{+}\mathsf A_1)=\left(\text{\begin{E6}{1}{0}{0}{0}{1}{1}\end{E6}}\right). 
\]
As in Section~\ref{sect:comput-c-r}, we use Theorem~\ref{thm:p94} and the related $(\BZ,h)$-grading to compute $\es_\star$.
Here $\g(0)=\mathfrak{sl}_4\oplus\te_3$,
$\g(2)=\esi_1\esi_2\vp_1+\esi_1\esi_3+\esi_2\esi_3\vp_3$, \ 
$\g(3)=\esi_1\esi_2\esi_3\vp_2$, and 
$\g(4)=\esi_1\esi_2^2\esi_3$.
Then $\ka=\sltri\oplus\te_1\simeq\mathfrak{gl}_3$ and $\g(2)\vert_\ka=\mu^4\tvp_1+\mu^{-4}\tvp_2+3\odin$, where $\tvp_1$ and $\tvp_2$ are the fundamental weights of $\sltri$ and $\mu$ is the basic character of the centre of $\ka$.
Then $\es\simeq\mathfrak{gl}_2$ and if $\vp$ is the only fundamental weight of $\tri$, then
$\g(3)\vert_\es=\mu^6+2\vp+\mu^{-6}$ and $\g(4)\vert_\es=\odin$. Hence
$\es_\star=\mathsf{g.i.s.}\bigl(\es: \g(3)\oplus \g(3)^*\bigr)=\{0\}$.
Thus, $r(\mathsf A_2{+}\mathsf A_1)=6$ and 
$\mathsf A_2$ is maximal among the orbits $\co\in \N/\GR{E}{6}$ such that $\CS(\co)\ne\eus E_6$.
\end{proof}

\begin{rmk}   \label{rem:CS-except}
For $G=\GR{F}{4}$, one has $\osp=\mathsf A_1{+}\tilde{\mathsf A}_1$, and the orbits covering $\osp$ in 
$\N(\eus F_4)$ are $\mathsf A_2$ and $\tilde{\mathsf A}_2$. These three orbits have rank `4', hence they 
do not occur in Theorem~\ref{thm:CS-except}.
\end{rmk}

\begin{rmk}
The Hasse diagrams of $(\N(\g),\curle)$ show that, for $\GR{E}{7}$, there are two maximal defective
orbits, $\mathsf A_2$ and $(3\mathsf A_1)''$. In the other cases, $\mathsf A_2$ is the only maximal 
defective orbit.
\end{rmk}
In Table~\ref{table:exc-defective}, we provide the relevant information on the defective orbits. Recall that
$\dim\CS(\co)=2\dim\co-2c(\co)-r(\co)$, see Theorem~\ref{thm:main-dim}.

\begin{table}[ht]
\caption{The defective orbits in the exceptional Lie algebras}
\label{table:exc-defective}
\begin{tabular}{|cccccc||cccccc|}
$\g$ & $\co$ & $\dim\co$ & $r(\co)$ & $c(\co)$ & $\es_\star$ & $\g$ & $\co$ & $\dim\co$ & 
$r(\co)$ & $c(\co)$ & $\es_\star$ 
\\ \hline
$\eus E_6$ & $\mathsf A_1$ & 22 & 1 & 0 & $\eus A_5$ & $\eus E_7$ & $\mathsf A_1$ & 34 & 1 & 0 & 
$\eus D_6$ \\
 & $2\mathsf A_1$ & 32 & 2 & 0 & $\eus A_3{+}\te_1$ &  & $2\mathsf A_1$ & 52 & 2 & 0 & $\eus D_4{+}\eus A_1$ \\
 & $3\mathsf A_1$ & 40 & 4 & 0 & $\te_2$ &  & $(3\mathsf A_1)''$ & 54 & 3 & 0 & $\eus D_4$ \\
 & $\mathsf A_2$ & 42 & 4 & 2 & $\te_2$ &  & $(3\mathsf A_1)'$ & 64 & 4 & 0 & $(\eus A_1)^3$ \\ \cline{1-6}
$\eus E_8$ & $\mathsf A_1$ & 58 & 1 & 0 & $\eus E_7$ &  & $\mathsf A_2$ & 66 & 4 & 2 & $(\eus A_1)^3$ \\ \cline{7-12}
 & $2\mathsf A_1$ & 92 & 2 & 0 & $\eus D_6$ & $\eus F_4$ & $\mathsf A_1$ & 16 & 1 & 0 & $\eus C_3$ \\
 & $3\mathsf A_1$ & 112 & 4 & 0 & $\eus D_4$ & & $\tilde{\mathsf A}_1$ & 22 & 2 & 0 & $\eus C_2$ \\ \cline{7-12}
 & $\mathsf A_2$ & 114 & 4 & 2 & $\eus D_4$ & $\eus G_2$ & $\mathsf A_1$ & 6 & 1 & 0 & $\eus A_1$ \\ \hline
\end{tabular}
\end{table}
For all other orbits, we have $\es_\star=\{0\}$, $r(\co)=\rk\,\g$, and $c(\co)=\dim\co-\dim\be$.

Using the rank group of a spherical orbit~\cite[Sect\,6]{p03}, we can explicitly describe the toral algebra 
$\te_\co$ for all spherical orbits, cf. Example~\ref{ex:ex}. By Proposition~\ref{prop:r=r'},
for the non-spherical orbit $\mathsf A_2$, the conical secant variety coincide with that 
for $3\mathsf A_1$ ($(3\mathsf A_1)'$ for $\eus E_7$). This provides a rather explicit presentation of
$\CS(\co)$ for all defective orbits. Furthermore, in some cases, the equations defining $\CS(\co)$
can be given.

\begin{ex}   \label{ex:E6}
For the orbits $3\mathsf A_1$ and $\mathsf A_2$ in $\g=\eus E_6$, the common variety
$\CS(\co)$ is of codimension~2 in $\g$. Here one can prove that it is a complete intersection.
Namely, for the adjoint representation of $\GR{E}{6}$, the degrees of the basic invariants are
$2,5,6,8,9,12$. Hence there are two basic invariants of odd degrees, say $F_5$ and $F_9$. Then 
\[
     \CS(3\mathsf A_1)=\CS(\mathsf A_2)=\{x\in \eus E_6 \mid F_5(x)=F_9(x)=0\} .
\]
\end{ex}

\section{Some explicit data for $\co$ and $\CS(\co)$}
\label{sect:data}

\noindent For all nilpotent orbits $\co$, we explicitly describe the canonical generic isotropy subalgebra
$\ess$ (in other words, the canonical embedding $\ess\hookrightarrow \g$) 
and the dense orbit $\CS(\co)\cap\N$.

\subsection{The canonical embedding of $\es_\star$}
\label{subs:canon-emb}

As always, $\ess$ is a generic isotropy subalgebra for the diagonal action $(G:\co\times\co)$. 
In Tables~\ref{table:canon-sl-sp}, \ref{table:canon-so}, and \ref{table:canon-exc}, we describe the canonical embeddings of 
$\ess$ for all $\co\in \N/G$. In view of results of Section~\ref{sect:comput-c-r}, it suffices to point 
out the canonical embedding of $\ess$ for all subposets $\{\ups_\xi\}_{\xi\in\Xi}$ in $\N_o/G$ except 
$\ups_{\sf max}$, where $\ess=\{0\}$.

If $\ess\hookrightarrow \g$ is the canonical embedding, then properties ($\eus P_2$)--($\eus P_3$)
in Section~\ref{subs:twist} show that $\Pi(\ess)$ (the set of simple roots of $\ess$ 
w.r.t.~$\be_\star=\be\cap\ess$ and $\te_\star=\te\cap\ess$) is a subset of $\Pi=\Pi(\g)$. In some cases, there is a unique 
embedding of the Dynkin diagram of $\ess$ into the Dynkin diagram of $\g$, denoted $\cD(\g)$. This 
already yields the canonical embedding of $[\ess,\ess]$ in $\g$ without much ado.  Otherwise, one has to 
carefully look at the canonical embeddings of stabilisers in all steps of inductive computations of $\ess$.

In the tables below, the canonical embedding is represented by an ``enhanced'' Dynkin diagram. The 
black nodes of the Dynkin diagram $\cD(\g)$ represents the simple roots of $\ess$, while the embedding 
of the centre of $\ess$, $\z(\ess)$, is depicted by arcs connecting certain pairs of  white nodes. The arc 
connecting nodes $i$ and $j$ in $\cD(\g)$ gives rise to the element $\ap_i-\ap_j\in\te^*\simeq\te$. All 
these elements form a basis for $\z(\ess)$. Thus, $\dim\z(\ess)$ equals the number of arcs and 
$\rk\,[\ess,\ess]$ is the number of black nodes. Yet another hint is that the involution $\vth$ 
(cf.~Section~\ref{subs:twist}) induces an automorphism of $\cD(\g)$, and the enhanced diagram should 
to be preserved by this automorphism. This automorphism is nontrivial (and hence of order $2$) for
$\sln$ $(n\ge 3)$, $\mathfrak{so}_{4m+2}$, and $\eus E_6$.

Enhanced Dynkin diagrams resemble the Satake diagrams associated with the isotropy 
representations of the symmetric spaces or, equivalently, the real forms of complex simple Lie algebras,
cf.~\cite[Table\,4]{t41}. Such diagrams have already been used in the setting
of canonical generic stabilisers related to $G$-actions on affine double cones, see~\cite[Chap.\,6]{p99}.

\begin{table}[ht]
\caption{The canonical embedding of $\ess$ for $\slN$ and $\spn$}
\label{table:canon-sl-sp}
\begin{tabular}{|ccccc|}
\hline
$\g$ & poset & cond. & $\ess$ & enhanced $\cD(\g)$ \\ \hline
$\slN$ & $\ups_j$ &  $2{\le} j{<} \frac{N}{2}$  & $\mathfrak{gl}_{N-2j}$   &
\begin{picture}(190,25)(0,7)
\setlength{\unitlength}{0.016in}
\multiput(10,8)(30,0){2}{\circle{5}}
\multiput(55,8)(60,0){2}{\circle{5}}
\multiput(70,8)(30,0){2}{\circle*{5}}
\multiput(130,8)(30,0){2}{\circle{5}}
\multiput(43,8)(15,0){2}{\line(1,0){9}}
\multiput(103,8)(15,0){2}{\line(1,0){9}}
\multiput(13,8)(60,0){3}{\line(1,0){4}}
\multiput(33,8)(60,0){3}{\line(1,0){4}}
\multiput(19.5,5)(60,0){3}{$\cdots$}   
{\color{blue}
\put(85,14){\oval(60,18)[t] }
\multiput(55,14)(60,0){2}{\vector(0,-1){3}}
}
\put(66,3){$\underbrace{\mbox{\hspace{38\unitlength}}}_{N-1-2j}$}
\put(6,3){$\underbrace{\mbox{\hspace{53\unitlength}}}_{j}$}
\put(111,3){$\underbrace{\mbox{\hspace{53\unitlength}}}_{j}$}
\end{picture}
\\
 &  $\ups_{1,r}$ & $r<\frac{N}{2}$ & $\mathfrak{gl}_{N-2r}{\times}\te_{r-1}$ &  
 \begin{picture}(190,50)(0,7)
\setlength{\unitlength}{0.016in}
\multiput(10,8)(30,0){2}{\circle{5}}
\multiput(55,8)(60,0){2}{\circle{5}}
\multiput(70,8)(30,0){2}{\circle*{5}}
\multiput(130,8)(30,0){2}{\circle{5}}
\multiput(43,8)(15,0){2}{\line(1,0){9}}
\multiput(103,8)(15,0){2}{\line(1,0){9}}
\multiput(13,8)(60,0){3}{\line(1,0){4}}
\multiput(33,8)(60,0){3}{\line(1,0){4}}
\multiput(19.5,5)(60,0){3}{$\cdots$}   
{\color{blue}
\multiput(19.5,13)(120,0){2}{$\cdots$} 
\put(85,14){\oval(150,30)[t]}
\put(85,14){\oval(90,25)[t]}
\put(85,14){\oval(60,18)[t]}
\multiput(55,14)(60,0){2}{\vector(0,-1){3}}
\multiput(40,14)(90,0){2}{\vector(0,-1){3}}
\multiput(10,14)(150,0){2}{\vector(0,-1){3}}
}
\put(66,3){$\underbrace{\mbox{\hspace{38\unitlength}}}_{N-1-2r}$}
\put(6,3){$\underbrace{\mbox{\hspace{53\unitlength}}}_{r}$}
\put(111,3){$\underbrace{\mbox{\hspace{53\unitlength}}}_{r}$}
\end{picture}
 \\
&  $\ups_{1,r}$ & $r=\frac{N}{2}$ & $\te_{r-1}$ &   
\begin{picture}(125,45)(5,7)
\setlength{\unitlength}{0.016in}
\multiput(10,8)(90,0){2}{\circle{5}}
\multiput(40,8)(15,0){3}{\circle{5}}
\multiput(43,8)(15,0){2}{\line(1,0){9}}
\multiput(13,8)(60,0){2}{\line(1,0){4}}
\multiput(33,8)(60,0){2}{\line(1,0){4}}
\multiput(19.5,5)(60,0){2}{$\cdots$}   
{\color{blue}
\multiput(19.5,13)(60,0){2}{$\cdots$} 
\put(55,14){\oval(90,24)[t]}
\put(55,14){\oval(30,16)[t] }
\multiput(10,14)(90,0){2}{\vector(0,-1){3}}
\multiput(40,14)(30,0){2}{\vector(0,-1){3}}
}
\put(66,3){$\underbrace{\mbox{\hspace{38\unitlength}}}_{r-1}$}
\put(6,3){$\underbrace{\mbox{\hspace{38\unitlength}}}_{r-1}$}
\end{picture} 
\\  
$\spn$ & $\ups_j$ &  $1{\le} j{\le} n{-}1$  & $\mathfrak{sp}_{2n-2j}$   &
\begin{picture}(95,33)(20,5)
\setlength{\unitlength}{0.016in}
\multiput(10,8)(30,0){2}{\circle{5}}
\multiput(55,8)(30,0){2}{\circle*{5}}
\put(100,8){\circle*{5}}
\multiput(89.3,7)(0,2){2}{\line(1,0){12.3}}
\multiput(13,8)(45,0){2}{\line(1,0){4}}
\multiput(33,8)(45,0){2}{\line(1,0){4}}
\multiput(19.5,5)(45,0){2}{$\cdots$}   
\put(43,8){\line(1,0){14}}
\put(51,3){$\underbrace{\mbox{\hspace{53\unitlength}}}_{n-j}$}
\put(87,5.9){\footnotesize $<$} 
\end{picture} 
\\
 & & & & \\ \hline
\end{tabular}
\end{table}

\noindent 
The enhanced diagrams in Table~\ref{table:canon-sl-sp} show that, for  item~1,
the space $\te_\co=\te(\ess)^\perp$ consists of the following diagonal $N\times N$ matrices:
\[
     \te_\co=\{\textsf{diag}(a_1,\dots,a_{j-1},\nu_j,\underbrace{c,\dots,c}_{N-2j},-\nu_j, b_{j-1}\dots, b_1)\}\cap\slN .
\]
Whereas for items~2 and 3, one has 
$\te_\co=\{\textsf{diag}(\nu_1,\dots,\nu_r,\underbrace{0,\dots,0}_{N-2r},-\nu_r,\dots, \nu_1)\}$.

In the last row of Table~\ref{table:canon-so} for $\sone$, the poset $\ups_{1,m}$ represents actually two 
very even orbits. The enhanced diagram for the second orbit is obtain by swapping two last nodes, cf. Example~\ref{ex:ex}. The case of $\sono$ appears to be simpler, because there are no very even orbits
and $\ess$ is always semisimple.

\begin{table}[htb]
\caption{The canonical embedding of $\ess$ for $\soN$}
\label{table:canon-so}
\begin{tabular}{|ccccc|}
\hline
$\g$ & poset & cond. & $\ess$ & enhanced $\cD(\g)$ \\ \hline
$\sone$ & $\ups_m$ &  $n{-}2m{\ge}2$  & $\mathfrak{so}_{2n-4m}$   &
\begin{picture}(125,25)(0,5)
\setlength{\unitlength}{0.016in}
\multiput(10,8)(30,0){2}{\circle{5}}
\multiput(55,8)(30,0){2}{\circle*{5}}
\put(43,8){\line(1,0){9}}
\multiput(13,8)(45,0){2}{\line(1,0){4}}
\multiput(33,8)(45,0){2}{\line(1,0){4}}
\put(100,0){\circle*{5}}
\put(100,16){\circle*{5}}
\put(88,10){\line(2,1){9}}
\put(88,6){\line(2,-1){9}}
\multiput(19.5,5)(45,0){2}{$\cdots$}   
\put(6,3){$\underbrace{\mbox{\hspace{38\unitlength}}}_{2m}$}
\put(51,-2){$\underbrace{\mbox{\hspace{53\unitlength}}}_{n-2m}$}
\end{picture}
\\
 & $\ups_m$ &  $n{-}2m{=}1$  & $\mathfrak{so}_{2}=\te_1$   &
\begin{picture}(105,40)(30,5)
\setlength{\unitlength}{0.016in}
\put(40,8){\circle{5}}
\multiput(55,8)(30,0){2}{\circle{5}}
\put(43,8){\line(1,0){9}}
\put(58,8){\line(1,0){4}}
\put(78,8){\line(1,0){4}}
\put(100,0){\circle{5}}
\put(100,16){\circle{5}}
\put(88,10){\line(2,1){9}}
\put(88,6){\line(2,-1){9}}
\put(64.5,5){$\cdots$}   
{\color{blue}\put(107,8){\oval(16,16)[r]}}
{\color{blue}\put(103,16){\vector(-1,0){2}}}
{\color{blue}\put(100.5,0){\vector(-1,0){2}}}
\end{picture}
\\
 & $\ups_{1,m}$ &  $n{-}2m{\ge}2$  & $(\tri)^m\oplus\mathfrak{so}_{2n-4m}$   &
\begin{picture}(160,30)(0,5)
\setlength{\unitlength}{0.016in}
\multiput(10,8)(45,0){2}{\circle*{5}}
\multiput(25,8)(45,0){2}{\circle{5}}
\multiput(85,8)(30,0){2}{\circle*{5}}
\put(13,8){\line(1,0){9}}
\multiput(58,8)(15,0){2}{\line(1,0){9}}
\multiput(28,8)(60,0){2}{\line(1,0){4}}
\multiput(48,8)(60,0){2}{\line(1,0){4}}
\put(130,0){\circle*{5}}
\put(130,16){\circle*{5}}
\put(118,10){\line(2,1){9}}
\put(118,6){\line(2,-1){9}}
\multiput(34.5,5)(60,0){2}{$\cdots$}   
\put(6,3){$\underbrace{\mbox{\hspace{68\unitlength}}}_{2m}$}
\put(81,-2){$\underbrace{\mbox{\hspace{53\unitlength}}}_{n-2m}$}
\end{picture}
\\
 & $\ups_{1,m}$ &  $n{-}2m{=}1$  & $(\tri)^m\oplus\mathfrak{so}_{2}$   &
\begin{picture}(105,40)(30,5)
\setlength{\unitlength}{0.016in}
\put(55,8){\circle{5}}
\multiput(40,8)(45,0){2}{\circle*{5}}
\put(43,8){\line(1,0){9}}
\put(58,8){\line(1,0){4}}
\put(78,8){\line(1,0){4}}
\put(100,0){\circle{5}}
\put(100,16){\circle{5}}
\put(88,10){\line(2,1){9}}
\put(88,6){\line(2,-1){9}}
\put(64.5,5){$\cdots$}   
{\color{blue}\put(107,8){\oval(16,16)[r]}}
{\color{blue}\put(103,16){\vector(-1,0){2}}}
{\color{blue}\put(100.5,0){\vector(-1,0){2}}}
\put(32,3){$\underbrace{\mbox{\hspace{53\unitlength}}}_{2m-1}$}
\end{picture}
\\
 & $\ups_{1,m}$ &  $n{=}2m$  & $(\tri)^m$   &
\begin{picture}(120,37)(30,5)
\setlength{\unitlength}{0.016in}
\multiput(55,8)(45,0){2}{\circle{5}}
\multiput(40,8)(45,0){2}{\circle*{5}}
\multiput(43,8)(45,0){2}{\line(1,0){9}}
\put(58,8){\line(1,0){4}}
\put(78,8){\line(1,0){4}}
\put(115,0){\circle*{5}}
\put(115,16){\circle{5}}
\put(103,10){\line(2,1){9}}
\put(103,6){\line(2,-1){9}}
\put(64.5,5){$\cdots$}   
\put(36,3){$\underbrace{\mbox{\hspace{68\unitlength}}}_{2m-2}$}
\end{picture}
\\
$\sono$ & $\ups_m$ &  $n{>}2m$  & $\mathfrak{so}_{2n+1-4m}$   &
\begin{picture}(95,35)(20,5)
\setlength{\unitlength}{0.016in}
\multiput(10,8)(30,0){2}{\circle{5}}
\multiput(55,8)(30,0){2}{\circle*{5}}
\put(100,8){\circle*{5}}
\multiput(13,8)(45,0){2}{\line(1,0){4}}
\multiput(33,8)(45,0){2}{\line(1,0){4}}
\multiput(19.5,5)(45,0){2}{$\cdots$}   
\put(43,8){\line(1,0){14}}
\put(51,3){$\underbrace{\mbox{\hspace{53\unitlength}}}_{n-2m}$}
\multiput(87.5,7)(0,2){2}{\line(1,0){8.8}}
\put(92,5.9){\footnotesize $>$} 
\end{picture} 
\\
 & $\ups_{1,m}$ &  $n{\ge}2m$  & $(\tri)^m{\oplus}\mathfrak{so}_{2n+1-4m}$   & 
\begin{picture}(160,30)(0,5)
\setlength{\unitlength}{0.016in}
\multiput(10,8)(45,0){2}{\circle*{5}}
\multiput(25,8)(45,0){2}{\circle{5}}
\multiput(85,8)(30,0){2}{\circle*{5}}
\put(13,8){\line(1,0){9}}
\multiput(58,8)(15,0){2}{\line(1,0){9}}
\multiput(28,8)(60,0){2}{\line(1,0){4}}
\multiput(48,8)(60,0){2}{\line(1,0){4}}
\put(130,8){\circle*{5}}
\multiput(117.5,7)(0,2){2}{\line(1,0){8.8}}
\put(122,5.9){\footnotesize $>$} 
\multiput(34.5,5)(60,0){2}{$\cdots$}   
\put(6,3){$\underbrace{\mbox{\hspace{68\unitlength}}}_{2m}$}
\put(81,3){$\underbrace{\mbox{\hspace{53\unitlength}}}_{n-2m}$}
\end{picture}
\\
 & & & & \\ \hline
\end{tabular}
\end{table}

If $\g$ is exceptional, then non-semisimple $\ess$ occur only for $\eus E_6$. In Table~\ref{table:canon-exc}, we provide the enhanced
diagrams for the canonical embedding of $\ess$ only for the cases, where the embedding of $\Pi(\ess)$
in $\Pi$ is not {\sl a priori\/} obvious, cf. Table~\ref{table:exc-defective}.

\begin{table}[ht]
\caption{The canonical embedding of some $\ess$ in exceptional $\g$}
\label{table:canon-exc}
\begin{tabular}{|cccc|}
\hline
$\g$ & $\co$ &  $\ess$ & enhanced $\cD(\g)$ \\ \hline
$\eus E_6$  & $2\mathsf A_1$ & $\eus A_3\oplus\te_1$ &  
\begin{picture}(90,27)(25,15) 
\setlength{\unitlength}{0.016in} 
\multiput(38,18)(15,0){4}{\line(1,0){9}}
\put(65,4){\circle{5}}
\multiput(50,18)(15,0){3}{\circle*{5}}
\multiput(35,18)(60,0){2}{\circle{5}}
\put(65,7){\line(0,1){8}}
{\color{blue}\put(65,23){\oval(60,15)[t]}
\multiput(35,23.1)(60,0){2}{\vector(0,-1){2}}}
\end{picture}
\\
  & $3\mathsf A_1, \mathsf A_2$ & $\te_2$ &  
\begin{picture}(90,35)(25,15) 
\setlength{\unitlength}{0.016in} 
\multiput(38,18)(15,0){4}{\line(1,0){9}}
\put(65,5){\circle{5}}
\multiput(50,18)(15,0){3}{\circle{5}}
\multiput(35,18)(60,0){2}{\circle{5}}
\put(65,8){\line(0,1){7}}
{\color{blue}
\put(65,23){\oval(60,15)[t]}  \put(65,23){\oval(30,12)[t]}
\multiput(35,23.1)(60,0){2}{\vector(0,-1){2}}
\multiput(50,23.1)(30,0){2}{\vector(0,-1){2}}}
\end{picture}
\\
$\eus E_7$  & $(3\mathsf A_1)', \mathsf A_2$ & $\eus A_1^3$ &  
\begin{picture}(98,27)(14,11)
\setlength{\unitlength}{0.016in} 
\multiput(23,15)(15,0){5}{\line(1,0){9}} 
\put(65,2){\circle*{5}}
\multiput(65,15)(15,0){3}{\circle{5}}
\put(65,4.5){\line(0,1){7.5}}
\multiput(20,15)(30,0){2}{\circle*{5}} \put(35,15){\circle{5}}
\end{picture}
\\ 
& & &  \\ 
\hline
\end{tabular}
\end{table}

\subsection{On the orbit $\tilde\co$}
\label{subs:tilde-o}
Recall from Section~\ref{sect:main} that $\els=\ess\oplus\te_\co$ is a generic isotropy subalgebra for the
action $(G:\CS(\co))$, $\CS(\co)=\ov{G{\cdot}\te_\co}$, and $\els$ is the centraliser of $\te_\co$ in $\g$.
Then $\tilde\co$ is the dense orbit in $\CS(\co)\cap\N$ and thereby the nilpotent orbit in the Dixmier sheet 
associated with $\els$, cf. Remark~\ref{rem:Dixmier}.

If $\g$ is classical and $\els$ is given explicitly via partitions, then there is a procedure for describing 
the partition of $\tilde\co$, see e.g.~\cite{kemp}. Since $\tilde\co$ depends only on the poset $\ups_\xi$ 
containing $\co$ and $\tilde\co=\co_{\sf reg}$ for all $\co\in\ups_{\sf max}$, it suffices to determine 
$\tilde\co$ for all other subposets of $\N/G$, if $\g$ is classical, and for the orbits in Table~\ref{table:exc-defective}, if $\g$ is exceptional. Below we provide the correspondence
$(\co\in\ups_\xi) \mapsto \tilde\co$ for all $\co$ and $\g$.

{\it\bfseries For\/} $\slN$: \ 
\textbullet\quad $\co\in \ups_j\sqcup\ups_{(1,j)} \mapsto  (2j+1,1^{N-2j-1})$, \ $j<N/2$;

\phantom{{\it\bfseries For\/} $\slN$:\ }
\textbullet\quad $\co \in \ups_{(1,N/2)} \mapsto (N)$.

{\it\bfseries For\/} $\spn$: \ 
\textbullet\quad $\co\in \ups_j \mapsto  (2j,2,1^{2n-2j-2})$, \ $1\le j\le n-1$;

{\it\bfseries For\/} $\soN$: \
\textbullet\quad $\co\in \ups_m \mapsto  (4m+1, 1^{N-4m-1})$, \ $2m< [N/2]$;

\phantom{{\it\bfseries For\/} $\soN$:\ }\textbullet\quad $\co\in \ups_{(1,m)} \mapsto 
\begin{cases}
(2m,2m), & N=4m, \\
(2m+1,2m-1,1), & N=4m+1, \\
(2m+1,2m+1,1^{N-4m-2}), & N\ge 4m+2. 
\end{cases}$

{\it\bfseries For the exceptional Lie algebras:}

\textbullet\quad  $\eus E_6, \eus E_7, \eus E_8$: \quad
$\mathsf A_1\mapsto \mathsf A_2$, \ \
$\mathsf A_2\mapsto \mathsf A_4$, \ \ 
$\mathsf {3A}_1,\mathsf A_2 \mapsto \mathsf E_6$, 

\noindent where $\mathsf {3A}_1$ for $\eus E_7$ is assumed to be $(\mathsf {3A}_1)'$.

\textbullet\quad  $\eus E_7$: \quad $(\mathsf {3A}_1)''\mapsto (\mathsf {A}_5)''$.

\textbullet\quad  $\eus F_4$: \quad $\mathsf A_1\mapsto \mathsf A_2$, \ \ 
$\tilde{\mathsf A}_1\mapsto \mathsf F_4(a_3)$.

\textbullet\quad  $\eus G_2$: \quad $\mathsf A_1\mapsto \mathsf G_2(a_1)$.


\section{Some complements and applications}  \label{sect:appl}
\subsection{On the posets $\ups_j$}                  \label{subs:ups}
In Sections~\ref{subs:sl}--\ref{subs:so}, we introduced certain subposets of $\N_o/G$
for the three classical series. Surprisingly, these posets have an additional interpretation.

\begin{thm}  \label{thm:ups}  \leavevmode
\begin{itemize}
\item[{\sf (i)}] \ For\/ $\g=\slN$, the poset $\ups_j$ $(2\le j\le [(N{-}1)/2])$ is isomorphic to the poset of
{\bfseries nonzero} orbits in $\N(\mathfrak{sl}_j)$;
\item[{\sf (ii)}] \ For\/ $\g=\spn$, the poset $\ups_j$ $(1\le j\le n{-}1)$ is isomorphic to the poset of
{\bfseries all} orbits in $\N(\mathfrak{so}_j)$;
\item[{\sf (iii)}] \ For\/ $\g=\soN$, the poset $\ups_m$ $(2\le 2m < [N/2])$ is isomorphic to the poset of
{\bfseries nonzero} orbits in $\N(\mathfrak{sp}_{2m})$.
\end{itemize}
\end{thm}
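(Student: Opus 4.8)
The plan is to establish the poset isomorphisms by comparing the explicit combinatorial description of the subposets $\ups_\xi$ via partitions (as given in Sections~\ref{subs:sl}--\ref{subs:so}) with the standard parametrization of nilpotent orbits in the smaller classical Lie algebras by partitions, together with the closure order on both sides being governed by the dominance order on partitions (see~\cite[Ch.\,6.2]{CM}). First I would recall that in each subposet $\ups_j$ (resp. $\ups_m$) the orbits are exactly those $\co_\blb\subset\N(\g(\VV))$ with a fixed value of $\rank(e)$, i.e. with a fixed number $p$ of nontrivial parts, and with $\lb_1\ge 3$ (equivalently $e^2\ne 0$). The key observation is that fixing $\rank(e)=N-p$ (resp. $2n-p$, resp. $N-p=2m$) together with the condition $\lb_1\ge 3$ forces the partition $\blb$ of $N$ to be obtained from a partition $\bmu$ of a \emph{smaller} integer by appending a block of $1$'s whose size is determined by $N$ and $\rank(e)$; and the order-preserving bijection $\blb\leftrightarrow\bmu$ is precisely what links $\ups_j$ to $\N(\cdot)$ of the smaller algebra.

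Concretely, for part~{\sf (i)} ($\g=\slN$), given $\co_\blb\in\ups_j$ with $\blb=(\lb_1,\dots,\lb_p)$, $\lb_1\ge3$, and $N-p=j$, I would write $\bmu=(\lb_1-1,\lb_2-1,\dots,\lb_p-1)$ after first checking this is a genuine partition of $j$ (since $\sum(\lb_i-1)=N-p=j$) and that it is nonzero (since $\lb_1-1\ge 2>0$); conversely any nonzero partition $\bmu$ of $j$ with $q$ parts yields $\blb=(\mu_1+1,\dots,\mu_q+1,1^{N-j-q})$, which has $\lb_1\ge 2$, and in fact $\lb_1\ge 3$ exactly when $\mu_1\ge2$, i.e. when $\bmu$ is a \emph{nonzero} partition of $j$ that is not $(1^j)$ --- but $(1^j)$ as a partition of $j$ gives $\blb=(2^j,1^{N-2j})$, which is spherical and lies in $\ups_{(1,j)}$, \emph{not} $\ups_j$; so I must be careful here and instead realize that the orbits in $\N(\mathfrak{sl}_j)$ corresponding to $\ups_j$ should be the \emph{nonzero} orbits, with the zero orbit of $\mathfrak{sl}_j$ matching the spherical orbit $\co_{(2^j,1^{N-2j})}\in\ups_{(1,j)}$ --- which is exactly the statement, so the map $\bmu\mapsto(\mu_1+1,\dots,\mu_q+1,1^{N-j-q})$ on \emph{nonzero} $\bmu$ already lands in $\ups_j$ because a nonzero $\bmu$ of $j$ with $\mu_1=1$ is impossible unless all parts are $1$... no: $\bmu=(1^j)$ is the unique nonzero partition with $\mu_1=1$, and its image $(2^j,1^{N-2j})$ lies in $\ups_{(1,j)}$. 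I would resolve this by noting the problem statement says ``\textbf{nonzero} orbits in $\N(\mathfrak{sl}_j)$'' and the bijection is $\co_\blb\mapsto\co_{(\lb_1-1,\dots,\lb_p-1)}$, which sends $\ups_j$ onto all nonzero orbits of $\mathfrak{sl}_j$ \emph{except} the zero orbit is excluded --- wait, the image of $\ups_j$ is all nonzero $\bmu$: indeed $(1^j)\mapsto$ is hit only by $(2^j,\dots)$ which is not in $\ups_j$. So the clean statement to prove: the subtraction-of-$1$ map is an order isomorphism from $\ups_j$ onto $\{\text{nonzero partitions of }j\}\setminus\{(1^j)\}$... but that contradicts {\sf (i)} which says \emph{all} nonzero orbits. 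I would re-examine: actually $(1^j)$ \emph{is} in the image iff some $\blb\in\ups_j$ maps to it, i.e. $\blb=(2^j,1^{N-2j})$, which has $\lb_1=2<3$, hence $\blb\notin\ups_j$. Hmm --- so here I would double-check the definition of $\ups_j$ for $\slN$: it requires $e^2\ne0$ i.e. $\lb_1\ge 3$, so indeed $(1^j)$ is not in the image. The correct reading must therefore be that in~{\sf (i)} the poset $\ups_j$ is isomorphic to the poset of nonzero orbits in $\N(\mathfrak{sl}_j)$ \emph{for $j\ge 2$ where the orbit $(1^j)$... }; I would simply trust the statement and verify that for $j=2$ both sides are a point and for larger $j$ the dominance orders match, deferring the edge-case bookkeeping to the full proof.

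The heart of the argument, once the set bijections are in place, is that they are order isomorphisms: I would invoke~\cite[Ch.\,6.2]{CM}, which says $\co_{\blb'}\curle\co_\blb$ iff $\blb'\preccurlyeq\blb$ in the dominance order, for each of $\slN,\spn,\soN$ and the smaller algebras $\mathfrak{sl}_j,\mathfrak{so}_j,\mathfrak{sp}_{2m}$. Since appending a fixed number of $1$'s and shifting the first $p$ parts by $\pm1$ are both strictly monotone operations for the dominance order \emph{within} the set of partitions with a fixed number of nonzero parts, the bijections preserve and reflect $\preccurlyeq$; one subtlety is that the dominance order on ``orthogonal'' or ``symplectic'' partitions is the one induced from all partitions, but with the extra proviso that when a partition fails the parity constraint one passes to its largest admissible predecessor (the $B,C,D$-collapse) --- so I must check that the collapse operations on the two sides are compatible under the bijection. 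For~{\sf (ii)}, $\co_\blb\in\ups_j$ with $\blb$ a symplectic partition of $2n$, $2n-p=j\le n-1$: the map is $\blb\mapsto\bmu=(\lb_1-1,\dots,\lb_p-1)$, a partition of $j$, and I would check $\bmu$ satisfies the orthogonal parity constraint (each even part occurs an even number of times) precisely because $\blb$ satisfies the symplectic one, using that $\lb_i-1$ is even iff $\lb_i$ is odd; similarly in~{\sf (iii)} $\soN\leadsto\mathfrak{sp}_{2m}$. These parity-matching checks, and the verification that the $B$/$C$/$D$-collapses correspond, are the main obstacle --- they are not deep but require a careful case analysis of how adding/removing $1$'s interacts with the admissibility conditions and with the collapse; the rest is bookkeeping with dominance order. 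I would also remark that the constancy of $r(\co)$ and of $\ess$ on each $\ups_\xi$ is already established (Theorems~\ref{thm:sl}--\ref{thm:so}), so nothing further is needed there; Theorem~\ref{thm:ups} is a purely combinatorial statement about the Hasse structure, independent of the representation-theoretic content of the preceding sections.
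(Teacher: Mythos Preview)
Your approach is essentially the paper's: the bijection in all three cases is the ``subtract $1$ from every part and drop the zeros'' map $\blb\mapsto\blb'$, which the paper identifies with erasing the first column of the Young diagram in the sense of Kraft--Procesi~\cite{KP81,KP82}. The paper's proof is terse---it writes down the map, checks that $\blb_{\sf max}(\cdot)$ and $\blb_{\sf min}(\cdot)$ go to the regular and to the minimal (resp.\ zero) orbit, and observes the parity flip explaining the $Sp\leftrightarrow SO$ switch---without spelling out the dominance-order argument or any collapse discussion.

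Your long hesitation in part~{\sf (i)} comes from a slip: in $\N(\mathfrak{sl}_j)$ the \emph{zero} orbit is parametrised by the partition $(1^j)$, not by the empty partition. Thus ``nonzero orbits in $\N(\mathfrak{sl}_j)$'' means exactly the partitions $\bmu$ of $j$ with $\mu_1\ge 2$, and that is precisely the image of $\ups_j$ under the map (since $\lb_1\ge 3$ forces $\mu_1=\lb_1-1\ge 2$). You had this right at one point (``the zero orbit of $\mathfrak{sl}_j$ matching the spherical orbit $\co_{(2^j,1^{N-2j})}$''), then talked yourself out of it. Once this is fixed there is no discrepancy with the statement.

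Two further remarks. First, your worry about $B/C/D$-collapses is unnecessary: the closure order on each side is the dominance order restricted to the admissible partitions, and since every $\blb\in\ups_\xi$ has the \emph{same} number $p$ of parts, the partial sums transform by $\sum_{i\le k}(\lb_i-1)=\bigl(\sum_{i\le k}\lb_i\bigr)-k$, so dominance is preserved and reflected with no further work. Second, for the bijectivity in {\sf (ii)} and {\sf (iii)} one must check not only that subtracting $1$ swaps the parity constraint (you noted this) but also that the inverse---padding $\bmu$ with zeros to length $p$ and adding $1$---lands in the admissible partitions; the only nontrivial point is that the number $p-q$ of appended $1$'s is even, and this follows from $|\bmu|\equiv q\pmod 2$ for orthogonal (resp.\ symplectic) partitions, which is an immediate consequence of the parity condition.
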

\begin{proof}
On the level of partitions, all three isomorphisms are given by the mapping 
\beq  \label{eq:trunc}
      \blb=(\lb_1,\lb_2,\dots,\lb_p)\mapsto \blb'=(\lb_1-1,\lb_2-1,\dots, \lb_p-1) ,
\eeq
where the zero parts in the RHS should be ignored. By definition of these posets, the resulting $\blb'$ is 
a partition of $j$, $j$, and $2m$, respectively. The formulae for $\blb_{\sf max}(\cdot)$ and 
$\blb_{\sf min}(\cdot)$ show that in all cases $\blb_{\sf max}(\cdot)'$ is the partition of the regular nilpotent 
orbit in the corresponding smaller Lie algebra, while $\blb_{\sf min}(\cdot)'$ yields either the minimal 
nilpotent orbit in cases {\sf (i)} and {\sf (iii)} or the zero orbit in case {\sf (ii)}.

For instance, if $j$ is even in {\sf (ii)}, then $\blb=\blb_{\sf max}(j)=(j,2,1^{2n-2-j})$ and $\blb'=(j-1,1)$, 
which corresponds to $\co_{\sf reg}$ in $\mathfrak{so}_j$. It is also clear that mapping~\eqref{eq:trunc} 
change the parity of parts and thereby takes the ``symplectic'' partitions to ``orthogonal'' ones, and vice 
versa. This explains the switch $SO\leftrightarrow Sp$ in parts {\sf (ii)} and {\sf (iii)}.
\end{proof}
\begin{rmk}   \label{rem:KP82}
The passage from $\blb$ to $\blb'$ is  a special case of ``erasing columns'' in the sense of 
Kraft--Procesi~\cite{KP81,KP82}. To this end, one represents $\blb$ geometrically as Young diagram 
with rows consisting of $\lb_1,\lb_2,\dots,\lb_p$ boxes, respectively. Then~\eqref{eq:trunc} means that
one erases the leftmost column in the Young diagram. By~\cite[Prop.\,3.1]{KP81} 
and~\cite[Prop.\,3.2]{KP82}, this procedure preserves the codimension of orbits. In our case, this means
that if $\co_\blb, \co_{\bmu}\in \ups_j$ and $\co_\blb\subset \ov{\co_{\bmu}}$, then
\[
  \dim\co_{\bmu}-\dim\co_{\blb}= \dim\co_{\bmu'}-\dim\co_{\blb'} .
\]
\end{rmk}

\subsection{The growth of secant defect}   \label{subs:defect}
The secant defect of the smooth projective variety $\BP\omin$ equals~1 (Example~\ref{ex:defekt-omin}).
However, if $\co\ne\omin$, then $\ov{\BP\co}$ is not smooth. It is known that, for non-smooth projective 
varieties, the secant defect can be arbitrarily large. Our results provide a good illustration to this
thesis. We point out the series of defective orbits $\co$ in the Lie algebras $\sln$, or $\spn$, or 
$\son$ such that $\delta_{\ov{\BP\co}}$ has a quadratic growth w.r.t.~$n$.

By Theorem~\ref{thm:main-dim} and Corollary~\ref{cor:pro-defekt}, if $\co$ is defective, then 
\beq          \label{eq:defect}
  \delta_{\ov{\BP\co}}=2c(\co)+r(\co)=2\dim\co-\dim G+\dim S_\star ,
\eeq
and the Lie algebras $\ess$ are described in Theorems~\ref{thm:sl}--\ref{thm:so}.

\noindent
\textbullet \ \ 
For $2\le j\le [\frac{N-1}{2}]$, the maximal orbit in $\ups_j\subset\N_o/SL_N$ is defective and 
corresponds to $\blb_{\sf max}(j)=(j+1,1^{N-j-1})$. The complexity of $\co=\co_{\sf max}(j)$ is maximal among all orbits in $\ups_j$, and we obtain using \eqref{eq:defect} that
$\delta_{\ov{\BP\co}}=j^2+(j-1)^2$. By Theorem~\ref{thm:sl}{\sf (ii)}, $r(\co)=2j-1$ and hence
$c(\co)=(j-1)^2$.

\noindent \textbullet \ \ 
For $1\le j\le n-1$, the maximal orbit in $\ups_j\subset\N_o/Sp_{2n}$ is defective. 

{\bf --} \ If $j$ is odd, it corresponds to $\blb=\blb_{\sf max}(j)=(j+1,1^{2n-j-1})$.  If $j=2m-1$, then one obtains
$\delta_{\ov{\BP\co_\blb}}=4m^2-6m+3$, $c(\co_\blb)=2(m-1)^2$, and $r(\co_\blb)=2m-1$.

{\bf --} \ If $j$ is even,
it corresponds to $\blb=\blb_{\sf max}(j)=(j,2,1^{2n-j-2})$.  If $j=2m$, then one obtains
$\delta_{\ov{\BP\co_\blb}}=4m^2-2m$, $c(\co_\blb)=2m^2-2m$, and $r(\co_\blb)=2m$.
\\ \indent
In both cases, we use Theorem~\ref{thm:sp}{\sf (ii)} and \eqref{eq:defect}.

\noindent\textbullet \ \ 
For $1\le m < \frac{1}{2}[N/2]$, the maximal orbit in $\ups_m\subset\N_o/SO_N$ is defective and 
corresponds to $\blb_{\sf max}(j)=(2m+1,1^{N-2m-1})$. Then
$\delta_{\ov{\BP\co}}=4m^2-2m$, $c(\co)=2m^2-2m$, and $r(\co)=2m$.
\\ \indent
Here we use Theorem~\ref{thm:so}{\sf (ii)} and \eqref{eq:defect}.

\noindent \textbullet \ \ For the exceptional Lie algebras, $\delta_{\ov{\BP\co}}$ does not exceed $8$, see
Table~\ref{table:exc-defective}.

\subsection{On possible values of the complexity}
It is proved in~\cite[Chap.\,4.5]{p99} that orbits $\co$ with $c(\co)=1$ exist only in $\g=\slN$, and the only 
possibility is the partition $\blb=(3,1^{N-3})$. Using our computations for the classical algebras 
(Section~\ref{sect:comput-c-r}) and exceptional algebras (Section~\ref{sect:defective-exc}), we obtain 
more precise assertions.

\begin{lm}
If\/ $\g$ is classical and $\ups_\xi$ is any subposet of $\N/G$ described in Section~\ref{sect:comput-c-r}, then the 
complexity $c(\co)$ has the same parity for all $\co\in \ups_\xi$.
\end{lm}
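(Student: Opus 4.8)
The plan is to derive the lemma from a single numerical identity together with the evenness of orbit dimensions, so that no case-by-case bookkeeping with partitions is needed. The key input is the formula $2c(\co)+r(\co)=2\dim\co-\dim\g+\dim\es_\star$ recorded in Section~\ref{subs:twist} (applied to $X=\co$), where $\es_\star$ is the canonical generic isotropy subalgebra of the diagonal action $(G:\co\times\co)$. By Theorems~\ref{thm:sl}, \ref{thm:sp}, and \ref{thm:so}, the subalgebra $\es_\star$ depends only on the subposet $\ups_\xi$ containing $\co$, and not on $\co$ itself; hence $\dim\es_\star$ and $r(\co)=\rk\,\g-\rk\,\es_\star$ are constant on $\ups_\xi$. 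Rearranging the identity gives
\[
       c(\co)-\dim\co=\frac{1}{2}\bigl(\dim\es_\star-\dim\g-r(\co)\bigr),
\]
so the integer $c(\co)-\dim\co$ takes one and the same value for all $\co\in\ups_\xi$.

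It then remains to observe that $\dim\co$ is even for every nilpotent orbit: via the Killing form, $\co\simeq\co^*\subset\g^*$ carries the Kirillov--Kostant symplectic form, hence is even-dimensional (this can also be read off the partition formulas of~\cite{CM}, or from $\dim\co=\dim\g-\dim\g(0)-\dim\g(1)$ for the $(\BZ,h)$-grading together with $\dim\g(i)=\dim\g(-i)$). Combining this with the displayed equality, $c(\co)$ is congruent modulo $2$ to the constant $\frac{1}{2}(\dim\es_\star-\dim\g-r(\co))$ for every $\co\in\ups_\xi$, which is the assertion.

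An alternative route to the first step avoids the identity of Section~\ref{subs:twist}: since $r$ is constant on $\ups_\xi$ and $\ups_\xi$ has a largest element $\co_{\sf max}$ (each member being contained in $\ov{\co_{\sf max}}$, as one checks from the dominance order on the partitions listed in Section~\ref{sect:comput-c-r}), Proposition~\ref{prop:r=r'} applied to the pair $(\co,\co_{\sf max})$ yields $c(\co)=c(\co_{\sf max})-\dim\co_{\sf max}+\dim\co$ directly. No edge case needs separate treatment: when $\ups_\xi$ is a one-orbit poset the claim is vacuous, and for $\g=\mathfrak{so}_{4m}$ the poset $\ups_{(1,m)}$ consists of the two very even orbits $\co_{(2^{2m}),{\sf I}}$ and $\co_{(2^{2m}),{\sf II}}$, which have equal dimension and the same $\es_\star=(\tri)^m$, so the displayed formula forces $c(\co_{(2^{2m}),{\sf I}})=c(\co_{(2^{2m}),{\sf II}})$ anyway.

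I do not expect a genuine obstacle here: the only point calling for any justification is the evenness of $\dim\co$, which is entirely standard. The one thing to be careful about is to invoke the constancy of $\es_\star$ itself along $\ups_\xi$ — not merely that of $r(\co)$ — so that the right-hand side of the displayed identity is literally a constant; this is exactly what the computations of Section~\ref{sect:comput-c-r} provide.
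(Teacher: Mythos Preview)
Your proof is correct and follows essentially the same line as the paper's. The paper uses the Borel-stabiliser formula $c(\co)=\dim\co-\dim\be+\dim\be(\es_\star)$ from $(\eus P_3)$--$(\eus P_4)$ directly, whereas you quote the equivalent identity $2c(\co)+r(\co)=2\dim\co-\dim\g+\dim\es_\star$ and divide by $2$; both amount to the observation that $c(\co)-\dim\co$ depends only on $\es_\star$, which is constant on $\ups_\xi$, together with the evenness of $\dim\co$.
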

\begin{proof}
Since a  Borel subalgebra of $\es_\star$, $\be(\es_\star)$, is a generic isotropy subalgebra for $(B:\co)$, 
we have $c(\co)=\dim\co-\dim\be+\dim\be(\es_\star)$. It remains to note that $\dim\co$ is even and 
$\es_\star$ is the same for all $\co\in\ups_\xi$.
\end{proof}

\begin{prop}   \label{prop:c-even1}
If\/ $\g\in\{ \eus B_n, \eus C_{n}, \eus D_{2m}\}$, then  $c(\co)$ is even for all $\co\in\N/G$. 
\end{prop}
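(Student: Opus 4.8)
The plan is to deduce the statement from the structural results of Section~\ref{sect:comput-c-r} together with the preceding Lemma, so that only a short parity bookkeeping remains. First I would recall that $\N_o/G$ is partitioned into the subposets $\ups_\xi$, that the preceding Lemma makes the parity of $c(\co)$ constant on each $\ups_\xi$, and that $c(\{0\})=0$; hence it suffices to check that $c(\co)$ is even for a single orbit in every $\ups_\xi$. I would then use the formula recorded in the proof of that Lemma, namely $c(\co)=\dim\co-\dim\be+\dim\be(\es_\star)$ with $\be(\es_\star)=\be\cap\es_\star$ a Borel subalgebra of $\es_\star$, where $\es_\star$ depends only on the subposet $\ups_\xi$ containing $\co$ and equals $\{0\}$ on $\ups_{\sf max}$. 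Since adjoint orbits are even-dimensional, $c(\co)$ is even if and only if $\dim\be$ and $\dim\be(\es_\star)$ have the same parity.

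Next I would compute these parities explicitly. Using $\dim\be(\q)=|\Delta^+(\q)|+\rk\,\q$ for reductive $\q$, one gets $\dim\be(\mathfrak{sp}_{2l})=\dim\be(\mathfrak{so}_{2l+1})=l(l+1)$ (even), $\dim\be(\mathfrak{so}_{2l})=l^2$, and $\dim\be(\tri)=2$; in particular $\dim\be$ itself equals $n(n+1)$ for $\eus B_n$ and $\eus C_n$ and $4m^2$ for $\eus D_{2m}$, all even. So the task reduces to showing that $\dim\be(\es_\star)$ is even for every $\es_\star$ appearing in Theorems~\ref{thm:sp} and~\ref{thm:so}. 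For $\g=\eus C_n$ the only options are $\{0\}$ and $\mathfrak{sp}_{2n-2j}$, whose Borel has even dimension $(n-j)(n-j+1)$. For $\g=\eus B_n$ (with $N=2n+1$) the options are $\{0\}$, $\mathfrak{so}_{N-4k}$, and $\mathfrak{so}_{N-4k}\oplus(\tri)^k$; since $N$ is odd, $N-4k$ is odd, the orthogonal factor has the form $\mathfrak{so}_{2l+1}$ and contributes an even number $l(l+1)$, while each $\tri$-factor contributes $2$. For $\g=\eus D_{2m}$ (with $N=4m$) the list is the same, but now $N-4k=4(m-k)$, so the orthogonal factor is $\mathfrak{so}_{2l}$ with $l=2(m-k)$ even, contributing $l^2=4(m-k)^2$, again even, and each $\tri$-factor still contributes $2$. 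In every case $\dim\be(\es_\star)$ is even, hence $c(\co)$ is even.

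I expect the only delicate point to be the reason the Proposition is restricted to these three types rather than all classical $\g$: one must notice that, for $\eus B_n$, $\eus C_n$, $\eus D_{2m}$, the generic isotropy subalgebra $\es_\star$ never contains an $\mathfrak{so}_2$-summand. Such a summand is a one-dimensional torus, so it would contribute an odd number to $\dim\be(\es_\star)$; this is precisely what fails for $\eus D_{2m+1}=\mathfrak{so}_{4m+2}$, and the $\mathfrak{gl}$-summands play the analogous role for $\slN$. The exclusion of the $\mathfrak{so}_2$-case here is exactly the content of the congruences ``$N$ odd'' (type $\eus B$) and ``$N\equiv 0\pmod 4$'' (type $\eus D_{2m}$) applied to the residual orthogonal algebras $\mathfrak{so}_{N-4k}$ furnished by Theorem~\ref{thm:so}; once that is in place, the rest is the routine arithmetic above.
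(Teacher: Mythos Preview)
Your proof is correct and follows essentially the same strategy as the paper: both use the preceding Lemma to reduce to a single parity check per subposet $\ups_\xi$, and both rest on the formula $c(\co)=\dim\co-\dim\be+\dim\be(\es_\star)$ together with the evenness of $\dim\be$ for $\eus B_n$, $\eus C_n$, $\eus D_{2m}$. The only difference is in the verification step: the paper checks parity by citing the explicit values of $c(\co_{\sf max}(j))$ already computed in Section~\ref{subs:defect}, whereas you compute $\dim\be(\es_\star)$ directly for each $\es_\star$ supplied by Theorems~\ref{thm:sp} and~\ref{thm:so}. Your route is slightly more self-contained and has the minor advantage of explicitly covering the single-orbit posets $\ups_{(1,m)}$ in the orthogonal case (which are spherical, hence trivially $c=0$, but which the paper's phrasing ``check $c(\co_{\sf max}(j))$'' glosses over).
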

\begin{proof}
If $\co\in\ups_{\sf max}$, then $c(\co)=\dim\co-\dim\be$ and $\dim\be$ is even in these cases.
If $\co\in\ups_j\ne\ups_{\sf max}$, then it suffices to check that $c(\co_{\sf max}(j))$ is even.
And this is done in Section~\ref{subs:defect}.
\end{proof}

\begin{prop}   \label{prop:c-even2}
If $\g$ is exceptional, then  $c(\co)$ is even for all $\co\in\N/G$.
\end{prop}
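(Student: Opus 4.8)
The plan is to reduce the claim to a finite check over the orbits listed in Table~\ref{table:exc-defective} together with the ``generic'' orbits, using the same structural dichotomy that drove the classical cases. Recall from Corollary~\ref{cor:sec=g} that for every orbit $\co$ with $r(\co)=\rk\,\g$ one has $c(\co)=\dim\co-\dim\be$; since $\dim\co$ is always even, it therefore suffices to observe that $\dim\be$ is even for each exceptional $\g$. This is immediate: $\dim\be=(\dim\g+\rk\,\g)/2$, and one checks case by case that $\dim\g+\rk\,\g$ is divisible by $4$ for $\g\in\{\eus G_2,\eus F_4,\eus E_6,\eus E_7,\eus E_8\}$ (the values of $\dim\g$ being $14,52,78,133,248$ and of $\rk\,\g$ being $2,4,6,7,8$). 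Hence all non-defective orbits automatically have even complexity.

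It then remains to handle the finitely many defective orbits, i.e.\ exactly those appearing in Theorem~\ref{thm:CS-except} and recorded in Table~\ref{table:exc-defective}. For each of these the value $c(\co)$ is listed explicitly in the table: the spherical orbits $m\mathsf A_1$ (with or without primes/tildes) have $c(\co)=0$, and the orbits $\mathsf A_2$ in $\eus E_6,\eus E_7,\eus E_8$ have $c(\co)=2$. All of these are even, so the proof is complete once we confirm those table entries. The entries $c(m\mathsf A_1)=0$ follow because these orbits are spherical by Theorem~\ref{thm:CS-except}, so $c(\co)=0$ by definition of sphericity. The entries $c(\mathsf A_2)=2$ follow from the computation already carried out in the proof of Theorem~\ref{thm:CS-except}, where the uniform description of $\mathsf A_2$ (its weighted Dynkin diagram being ``twice'' that of $\omin$) together with Theorem~\ref{thm:p94} yields $c(\mathsf A_2)=2$ and $r(\mathsf A_2)=4$ whenever the highest root of $\g$ is fundamental, which holds for $\eus E_6,\eus E_7,\eus E_8$.

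So the structure of the argument is: first dispose of all orbits with $r(\co)=\rk\,\g$ by the parity of $\dim\be$; then invoke Theorem~\ref{thm:CS-except} to see that the remaining orbits are exactly those in Table~\ref{table:exc-defective}; then read off $c(\co)\in\{0,2\}$ from the table. I do not expect any real obstacle here — the only thing requiring genuine input is the complexity computation for $\mathsf A_2$, and that is already established in the proof of Theorem~\ref{thm:CS-except}. One could also phrase the whole proof more uniformly: for every $\co$, Corollary~\ref{cor:gis-CS} and the relation $c(\co)=\dim\co-\dim\be+\dim\be(\es_\star)$ show that the parity of $c(\co)$ equals the parity of $\dim\be(\es_\star)$, and inspecting the list of possible $\es_\star$ for exceptional $\g$ (namely $\{0\}$ and the semisimple algebras $\eus A_5,\eus A_3,\eus D_4,\eus D_6,\eus E_7,\eus C_2,\eus C_3,\eus A_1,(\eus A_1)^3,\eus D_4{+}\eus A_1$, plus the single non-semisimple case $\eus A_3\oplus\te_1$ and $\te_2$ for $\eus E_6$) shows $\dim\be(\es_\star)$ is even in every case — but since $\eus E_6$ does produce the non-semisimple $\es_\star=\eus A_3{\oplus}\te_1$ with $\dim\be(\es_\star)=7+1=8$ (even) and $\es_\star=\te_2$ with $\dim\be(\es_\star)=2$ (even), this uniform route works too and is perhaps cleaner to state.
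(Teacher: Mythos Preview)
Your proof is correct and follows the same two-step approach as the paper: handle the non-defective orbits via the parity of $\dim\be$, then read off $c(\co)\in\{0,2\}$ for the defective orbits from Table~\ref{table:exc-defective}. One small slip in your alternative route: for $\es_\star=\eus A_3\oplus\te_1$ one has $\dim\be(\es_\star)=9+1=10$, not $7+1=8$, but this is still even so the conclusion stands.
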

\begin{proof}
If $\co$ is non-defective,  then $c(\co)=\dim\co-\dim\be$, and $\dim\be$ is even in these cases.
The defective orbits are listed in Theorem~\ref{thm:CS-except}, and $c(\co)=0$ or $2$ for all of them.
\end{proof}

\begin{rmk}
For $\eus D_{2m+1}$, all defective orbits have an even complexity. But the complexity of non-defective
orbits is always odd (because $\dim\be=(2m+1)^2$ is odd).
\end{rmk}

\subsection{On possible values of the rank}
Using results of Section~\ref{sect:comput-c-r},  we explicitly describe the possible values of $r(\co)$ 
for the orbits in the classical Lie algebras.

\begin{prop}   \label{prop:sl-ranks}
The rank of\/ $\co\in\N/SL_N$ cannot take a value $2p$ such that
$N/2 < 2p <N{-}1$.  
\end{prop}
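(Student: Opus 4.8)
The plan is to read off the complete list of attainable values of $r(\co)$ directly from Theorem~\ref{thm:sl}. That theorem, together with the partition $\N_o/SL_N=\ups_{\sf max}\sqcup\bigsqcup_{j=2}^{[(N-1)/2]}\ups_j\sqcup\bigsqcup_{r=1}^{[N/2]}\ups_{(1,r)}$ recorded in Section~\ref{subs:sl}, shows that for a nonzero orbit $\co$ one has $r(\co)=N-1$ if $\co\in\ups_{\sf max}$, $r(\co)=2j-1$ if $\co\in\ups_j$, and $r(\co)=r$ if $\co=\co_{(2^r,1^{N-2r})}\in\ups_{(1,r)}$; for the zero orbit $r=0$. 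Hence the set of values taken by $r$ on $\N/SL_N$ is $\{0,1,2,\dots,[N/2]\}\cup\{3,5,\dots,2[(N-1)/2]-1\}\cup\{N-1\}$.

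Next I would isolate the even values in this set. The numbers $2j-1$ produced by the posets $\ups_j$ are odd, so they can be ignored; an even value $2p=r(\co)$ therefore arises either from a spherical orbit $\co_{(2^r,1^{N-2r})}$, in which case $2p=r\le[N/2]\le N/2$, or from $\ups_{\sf max}$, in which case $2p=N-1$ (and this occurs only for odd $N$). The zero orbit contributes only $r=0$.

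Finally I would conclude: in the first case $2p\le N/2$, so $2p$ is not strictly greater than $N/2$; in the second case $2p=N-1$ is the right endpoint of the interval, so $2p$ is not strictly less than $N-1$. In either case $2p$ does not lie in the open interval $(N/2,\,N-1)$, which is exactly the claim. There is essentially no obstacle in this argument; the only point that needs care is that $\ups_{\sf max}$ does yield the even value $N-1$ when $N$ is odd, but that is precisely the endpoint excluded by the strict inequalities in the statement, so no contradiction occurs and nothing further is required.
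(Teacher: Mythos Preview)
Your proof is correct and follows essentially the same route as the paper: both arguments read off the complete list of values of $r(\co)$ from Theorem~\ref{thm:sl} according to the decomposition of $\N_o/SL_N$ into $\ups_{\sf max}$, the $\ups_j$, and the $\ups_{(1,r)}$, and then observe that no even value lands in the open interval $(N/2,\,N{-}1)$. You spell out the isolation of even values a bit more carefully than the paper does, but the substance is identical.
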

\begin{proof}
This readily follows from Theorem~\ref{thm:sl}.
If  $\co\in\ups_{\sf max}$, then $r(\co)=N-1$. For $\co\in\ups_j$, where $j< N/2$, we have $r(\co)=2j-1$.
If $\co\in\ups_{(1,r)}$, then $r(\co)=r\le N/2$.
\end{proof}

Likewise, using Theorem~\ref{thm:so}, we obtain
\begin{prop}    \label{prop:so-ranks}
For $\co\in\N/SO_N$, the integer $r(\co)$ cannot take a value $2p+1$ such that \\
$[N/4] < 2p+1 <[N/2]=\rk\,\son$. 
\end{prop}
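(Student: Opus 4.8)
The plan is to derive everything from the structural description of $r(\co)$ obtained in Theorem~\ref{thm:so}, exactly as Proposition~\ref{prop:sl-ranks} followed from Theorem~\ref{thm:sl}. First I would recall that, by the splitting ${\N}_o/SO_N=\ups_{\sf max}\sqcup(\bigsqcup_{m\ge 1}\ups_m)\sqcup(\bigsqcup_{m}\ups_{(1,m)})$, together with the zero orbit, every nonzero orbit falls into one of three families, and Theorem~\ref{thm:so} tells us the value of $r(\co)$ in each: $r(\co)=[N/2]=\rk\,\son$ if $\co\in\ups_{\sf max}$; $r(\co)=2m$ if $\co\in\ups_m$ (with the constraint $2m<[N/2]$ coming from the definition of $\ups_m$); and $r(\co)=m$ if $\co\in\ups_{(1,m)}$ with $1\le m\le [N/4]$. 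The zero orbit contributes $r=0$ (or can simply be excluded, since the statement concerns $\N/SO_N$ in the sense of nonzero orbits, but in any case $0$ is not of the form $2p+1$).

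Next I would assemble the set of attainable values. From $\ups_{\sf max}$ we get only the single value $\rk\,\son=[N/2]$. From the $\ups_m$ we get all even numbers $2m$ with $2\le 2m<[N/2]$ (and $2m=0$ is not relevant). From the $\ups_{(1,m)}$ we get all integers $m$ with $1\le m\le[N/4]$. Now fix an odd value $2p+1$ with $[N/4]<2p+1<[N/2]$. It is not attained in $\ups_{\sf max}$, since that forces the value $[N/2]$ and $2p+1<[N/2]$. It is not attained in any $\ups_m$, since those values are even and $2p+1$ is odd. Finally it is not attained in any $\ups_{(1,m)}$, because those values are at most $[N/4]$, whereas $2p+1>[N/4]$. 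Hence no orbit has rank $2p+1$, which is the claim.

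The only genuinely substantive point, and therefore the place to be careful, is that the three families above really do exhaust $\N_o/SO_N$ and that the ranges of $m$ are exactly as stated; this is precisely the content of the case analysis preceding and inside Theorem~\ref{thm:so}, so the proof is a direct bookkeeping consequence and carries no hidden obstacle. One small caveat to flag explicitly in the write-up: when $N=4m$ the partition $(2^{2m})$ is very even and $\ups_{(1,m)}$ formally represents two orbits, but both have the same rank $m$, so the tally of attainable rank values is unaffected.

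\begin{proof}
By Theorem~\ref{thm:so}, the set of values taken by $r(\co)$ on $\N_o/SO_N$ is determined by the three families of subposets:
\begin{itemize}
\item $r(\co)=\rk\,\son=[N/2]$ \ for $\co\in\ups_{\sf max}$;
\item $r(\co)=2m$ \ for $\co\in\ups_m$, and here $2\le 2m<[N/2]$;
\item $r(\co)=m$ \ for $\co\in\ups_{(1,m)}$, and here $1\le m\le [N/4]$
\end{itemize}
(if $N=4m$, the poset $\ups_{(1,m)}$ consists of two very even orbits $\co_{(2^{2m}),{\sf I}}$ and $\co_{(2^{2m}),{\sf II}}$, but both have rank $m$). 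Thus the values of $r(\co)$ attained on $\N_o/SO_N$ are: the integer $[N/2]$, all even integers in the interval $[\,2,[N/2]\,)$, and all integers in $[\,1,[N/4]\,]$.

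Now suppose $p$ is such that $[N/4]<2p+1<[N/2]$. Since $2p+1<[N/2]=\rk\,\son$, it is not the rank of any orbit in $\ups_{\sf max}$. Since $2p+1$ is odd, it is not the rank of any orbit in $\bigsqcup_m\ups_m$. Since $2p+1>[N/4]$, it is not the rank of any orbit in $\bigsqcup_m\ups_{(1,m)}$. Hence no $\co\in\N/SO_N$ has $r(\co)=2p+1$.
\end{proof}
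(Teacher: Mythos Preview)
Your proof is correct and is exactly the argument the paper intends: the statement is presented there as an immediate consequence of Theorem~\ref{thm:so} (``Likewise, using Theorem~\ref{thm:so}, we obtain\ldots''), and you have spelled out precisely that bookkeeping.
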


For $\spn$, every $j\in [1,n]$ is the rank of some orbit $\co$.

{\bf Example.}  The forbidden values of $r(\co)$ are: \  
$6$ \ for $\mathfrak{sl}_9$; \  $6,8$ for $\mathfrak{sl}_{10}$;
$5,7$ \ for $\mathfrak{so}_{16}$.

\subsection{Higher secant varieties and joins}   \label{subs:higher}
Our approach to secant varieties of nilpotent orbits applies also to their 
higher secant varieties (with less satisfactory results).

Let $\Sec_r(X)$ denote the (higher) secant variety of $X\subset \BP^n$. Here $\Sec_1(X)=X$ and
$\Sec_2(X)=\Sec(X)$ is the usual secant variety. So, new objects occur for $r\ge 3$, 
see~\cite[Chap.\,5.1]{land}. Then $\CS_r(\co)$ stands for the affine cone over $\Sec_r(\ov{\BP\co})$.

Consider the diagonal action of $G$ on the $r$-fold product $\co^r=\co\times\ldots\times\co$. 
For $z=(x_1,\dots,x_r)\in\co^r$, consider the sum of the embedded tangent spaces 
\[
   {\mathsf T}_z=\BT_{x_1}(\co)+\dots +\BT_{x_r}(\co)\subset \g .
\]
Then $(\mathsf T_z)^\perp=\g^{x_1}\cap\ldots \cap\g^{x_r}=\g^z$. On the other hand,
Terracini's lemma shows that 
$\dim \CS_r(\co)=\dim \mathsf T_z$ for almost all $z\in\co^r$. Therefore, for any $r\ge 1$, we have
\[
     \dim \CS_r(\co)=\max_{z\in \co^r}\dim G{\cdot}z .
\]
\indent \textbullet \ \ If $r=2m$, then $\co^r\simeq \co^{m}\times(\co^{m})^*$ can be regarded as doubled 
$G$-variety. Therefore, the theory exposed in Section~\ref{subs:twist} applies to $X=\co^m$. Hence the 
action $(G:\co^{2m})$ has a reductive generic stabiliser of a special form, etc. This yields the equality
\[
    \max_{z\in\co^{2m}}\dim G{\cdot}z=\dim\CS_{2m}(\co)=2m\dim\co-2c(\co^m)-r(\co^m) .
\]
However, if $m>1$, then it is not easy to compute a $G$-generic stabiliser for $\co^{2m}$ and then 
the complexity and rank of $\co^m$.

\textbullet \ \ If $r=2m+1$, then a generic stabiliser for $(G:\co^{2m+1})$ is not necessarily reductive, and
results on doubled actions do not apply.

\begin{ex}    \label{ex:higher-omin}
Let $\co=\omin$ be the minimal nilpotent orbit in $\spn=\spv$. Then $\bco\simeq \eus V/\BZ_2$,
where the nontrivial element of $\BZ_2$ acts as $-1$ on $\eus V$. Therefore, a generic isotropy 
subalgebra for the action $(Sp(\eus V): \co^r)$ is the same as for the representation 
$(Sp(\eus V): r\eus V)$. By~\cite[Table\,2]{ag72}, \ 
$\mathsf{g.i.s.}(Sp(\eus V): r\eus V)=\begin{cases}  0, & \text{ if } r\ge 2n, \\
 \mathfrak{sp}_{2n-2m}, & \text{ if }  r=2m<2n.  \end{cases}$
\\
If $r=2m+1<2n$, then a generic stabiliser is the stabiliser of a nonzero vector in the space of 
tautological representation of $Sp_{2n-2m}$. This readily implies well-known results on $\CS_r(\omin)$ in the symplectic case, 
see e.g.~\cite[Theorem\,1.1]{badr}.
\end{ex}

Likewise, it $\co_1$ and $\co_2$ are different nilpotent orbits, then we may consider the {\it join\/} of their 
projectivisations, $\mathcal J(\ov{\BP\co_1}, \ov{\BP\co_2})\subset \BP\g$~\cite[Chap.\,5.1]{land}. For
the corresponding affine cone $\CJ(\co_1,\co_2)\subset \g$, one similarly obtains that
\[
     \dim \CJ(\co_1,\co_2)=\max_{z\in \co_1\times\co_2}\dim G{\cdot}z .
\]
However, this seems to be not very practical in general.

\appendix
\section{Determinantal varieties}  
\label{appendix}

\noindent
Here we gather some standard results on three types of determinantal varieties.

{\it\bfseries (1)} \ If $r\le N$, then
\[
    \sD_r(N)=\{M\in \glN \mid \rank(M)\le r\}
\]
is a {\it determinantal variety}. It is known that the variety 
$\sD_r(N)$ is irreducible and normal, and $\dim \sD_r(N)=r(2N-r)$, see~\cite[Chap.\,5.2]{smt}. Therefore
$\dim(\sD_r(N)\cap\slN)=r(2N-r)-1$. In this case, $\sD_r^o(N):=\sD_r(N)\cap\slN$ is also irreducible.

{\bf Remark.} Our variety $\sD_r(N)$ is denoted by $D_{r+1}(N)$ in book~\cite{smt}.

{\it\bfseries (2)} \  \ Let $\sk(N)$ denote the space of usual skew-symmetric matrices in $\glN$. If 
$2p\le N$, then
\[
        \sk_{2p}(N)=\{M\in \sk(N) \mid \rank(M)\le 2p\}
\]
is a {\it skew-symmetric determinantal variety}. The variety
$\sk_{2p}(N)$ is irreducible and normal, and $\dim\sk_{2p}(N)=p(2N-2p-1)$, see~\cite[Chap.\,7.2]{smt}. 
Let $\cI=\cI_N$ be the $N\times N$-matrix with 1's along the antidiagonal and zeros otherwise. Then 
$M^t=-M$ if and only if $\cI M$ is skew-symmetric with respect to the antidiagonal. Using the linear isomorphism 
$M\mapsto \cI M$, we may regard $\sk_{2p}(N)$ as variety of matrices that are skew-symmetric with 
respect to the antidiagonal. This is needed, if we regard $\soN$ as the space of skew-symmetric matrices w.r.t.~the antidiagonal.
        
{\it\bfseries (3)} \ Let $\sym(N)$ denote the set of all symmetric matrices of order $N$. If $m\le N$,
then
\[
     \sym_{m}(N)=\{M\in \sym(N) \mid \rank(M)\le m\}
\] 
is a {\it symmetric determinantal variety}. It is known that $\sym_{m}(N)$ is 
irreducible and normal, and $\dim\sym_{m}(N)=mN-\frac{m(m-1)}{2}$, see~\cite[Chap.\,6.2]{smt}. 
If $N=2n$ is even, then $\sym(2n)$ can be identified with $\spn$. For, if $\cJ$ is the matrix of an alternate
bilinear form that defines $\spn$, then \\
\centerline{
$M\in \spn$ \ if and only if \  $(\cJ M)^t=\cJ M$. }
\\[.6ex]
Using this linear isomorphism $M\mapsto \cJ M$, we may identify the symmetric determinantal varieties 
$\sym_{m}(2n)$ with subvarieties of $\spn$.

\end{document}